\newcommand{\pl}[1]{\foreignlanguage{polish}{#1}}
\newcommand{\innprod}[2]{\left< #1, #2 \right>}
\newcommand{\R}{\mathbb{R}}
\newcommand{\C}{\mathbb{C}}
\newcommand{\RR}{\mathbb{R}}
\newcommand{\Z}{\mathbb{Z}}
\newcommand{\E}{\mathbb{E}}
\renewcommand{\P}{\mathbb{P}}
\DeclareMathOperator{\erfc}{erfc}
\DeclareMathOperator{\Rea}{Re}
\DeclareMathOperator{\ed}{ED}
\DeclareMathOperator{\supp}{supp}
\DeclareMathOperator{\dom}{Dom}
\DeclareMathOperator{\dist}{dist}
\newcommand{\ind}[1]{\mathbbm{1}_{#1}}
\newcommand{\ag}{\approx_g}
\newcommand{\leg}{\leqslant_g}
\newcommand{\lesg}{\lesssim_g}
\newcommand{\gtrsg}{\gtrsim_g}
\theoremstyle{theorem}
\newtheorem{theorem}{Theorem}[section]
\newtheorem{theoremx}{Theorem}
\newtheorem{lemma}[theorem]{Lemma}
\newtheorem{pro}[theorem]{Proposition}
\newtheorem{cor}[theorem]{Corollary}
\theoremstyle{remark}
\newtheorem*{remark}{Remark}
\theoremstyle{definition}
\newtheorem{df}{Definition}[section]
\definecolor{green}{rgb}{0,0.75,0}
\newcommand{\la}{\lambda}
\numberwithin{equation}{section}
\title[On $L^{p}$ estimates for Riesz transforms related to Schr\"odinger operators]{On $L^p$ estimates for positivity-preserving Riesz transforms related to Schr\"odinger operators}
\author{Maciej Kucharski}
\address{Maciej Kucharski\\
	Instytut Matematyczny\\
	Uniwersytet \pl{Wroc{\l}awski}\\
	Plac Grun\-waldzki 2\\
	50-384 \pl{Wroc{\l}aw}\\
	Poland}
\email{mkuchar@math.uni.wroc.pl}
\author{B{\l}a{\.z}ej Wr{\'o}bel}
\address{ B{\l}a{\.z}ej Wr{\'o}bel\\
	Instytut Matematyczny, Polska Akademia Nauk, \'Sniadeckich 8,	00–656 Warszawa \& Instytut Matematyczny, Uniwersytet Wroc\l awski, pl. Grunwaldzki 2/4, 50-384 Wroc\l aw, Poland}
\email{blazej.wrobel@math.uni.wroc.pl}
\subjclass[2020]{47D08, 42B20, 42B37}
\keywords{Riesz transform, Schr\"odinger operator, $L^p$ boundedness}
\thanks{Both authors were supported by the National Science Centre (NCN), Poland  research project Preludium Bis 2019/35/O/ST1/00083.}
\begin{document}
	
	\begin{abstract}
		We study the  $L^{p},$ $1\leqslant p\leqslant \infty,$  boundedness for Riesz transforms of the form $V^{a}(-\frac12\Delta+V)^{-a},$ where $a>0$ and $V$ is a non-negative potential. 	We prove that $V^{a}(-\frac12\Delta+V)^{-a}$ is bounded on  $L^p(\R^d)$ with $1< p\leqslant 2$ whenever $a\leqslant 1/p.$   We  demonstrate that the $L^{\infty}(\R^d)$ boundedness holds if $V$ satisfies an $a$-dependent integral condition that is resistant to small perturbations. Similar results with stronger assumptions on $V$ are also obtained on $L^{1}(\R^d).$ In particular our $L^{\infty}$ and $L^1$ results apply to non-negative locally bounded potentials $V$ which globally have a power growth or an exponential growth.
		We also discuss a counterexample showing that the $L^{\infty}(\R^d)$ boundedness may fail.
	
	\end{abstract}
	
	\maketitle
	\section{introduction}
	\label{sec:int}

In this paper we consider a class of Riesz transforms related to the Schr\"{o}dinger operator
\[
L = -\frac{1}{2}\Delta + V,
\]
with $V$ being a non-negative potential in $L^1_{\rm loc}$. The operator $L$ is rigorously defined via quadratic forms, see Section \ref{sec:def}.  The Riesz transforms are formally given, for $a>0,$  by
\begin{equation}
\label{eq:Rdef}
R^a _Vf(x)= V^{a}(x)\cdot \left( -\tfrac{1}{2}\Delta+V\right)^{-a}f(x)=\frac{V^{a}(x)}{\Gamma(a)} \cdot \int_0^\infty e^{-tL}f(x) \ t^{a-1} dt,
\end{equation}
where $e^{-tL}$ is the corresponding semigroup. 
We also set $R^0_V$ to be the identity operator.
By the Trotter product formula the operators $R^{a}_V$ are positivity preserving, unlike the Riesz transforms $\nabla L^{-1/2}$, which we do not study here. One can also see, cf.\ \thref{pro:L2}, that for $V\in L^1_{\rm loc}$ and $a=\frac{1}{2}$ the formal expression \eqref{eq:Rdef} gives rise to a contraction on $L^2(\RR^d).$ For a general non-negative potential $V\in L^1_{\rm loc}$ we also know the $L^1(\R^d)$ boundedness of $R_V^1$, see for example \cite{AuBA, GallMor, katoLp}. Note that, apart from the case when $V$ is constant, neither $R^{1/2}_V$ nor $R^{1}_V$  is a convolution operator. 

Apart from the cases $a=\frac{1}{2}$ and $a=1$ there seem to be no $L^p$ boundedness results for Riesz transforms $R_V^a$ of general potentials $V\in L^1_{\rm loc}$. For $V$ belonging to a reverse H\"older class $L^p$ boundedness of $R_V^a,$ $0<a< 1$, is mentioned in  \cite[p.\ 1978]{AuBA}. We prove the following general result.
\begin{theoremx}[\thref{thm:genLp}]
	\thlabel{thm:genLpint}
	Let  $V\in L^1_{\rm loc}$ and take $p\in (1,2].$ Then for all $0\leqslant a \leqslant 1/p$ the Riesz transform $R_V^a$ is bounded on $L^p.$
\end{theoremx}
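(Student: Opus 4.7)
The proof strategy is complex (Stein) interpolation for analytic families of operators, anchored by the pointwise domination
\[
|R_V^z f(x)| \;\le\; \frac{\Gamma(\mathrm{Re}\,z)}{|\Gamma(z)|}\, R_V^{\mathrm{Re}\,z}\,|f|(x),\qquad \mathrm{Re}\,z>0,
\]
which follows immediately from $|t^{z-1}|=t^{\mathrm{Re}(z)-1}$, the positivity of $e^{-tL}$, and $|V^z|=V^{\mathrm{Re}(z)}$. Combined with the Stirling-type bound $|\Gamma(x+iy)|^{-1}\lesssim e^{\pi|y|/2}$, this supplies boundary bounds of admissible growth for the family $z\mapsto R_V^z$ on any vertical strip contained in the open right half-plane, provided the real-exponent operators are already known to be $L^p$-bounded at the boundary values.

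\emph{Diagonal $a=1/p$.} First I would apply Stein to $T_z=R_V^{(1+z)/2}$ on $0\le\mathrm{Re}(z)\le 1$: at $\mathrm{Re}(z)=0$ the pointwise bound plus the $L^2$-contractivity of $R_V^{1/2}$ (\thref{pro:L2}) gives an admissibly-growing $L^2$ estimate, and at $\mathrm{Re}(z)=1$ the pointwise bound plus the $L^1$ bound of $R_V^1$ from \cite[Lem.~3.10]{DzZi0} gives an admissibly-growing $L^1$ estimate. Stein's theorem yields $R_V^{1/p}$ bounded on $L^p$ for every $p\in[1,2]$.

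\emph{Left edge $p=2$.} Next I would apply Stein to $T_z=R_V^z$ on $0\le\mathrm{Re}(z)\le 1/2$. At the left boundary, $R_V^{it}=M_{V^{it}}L^{-it}$ is a composition of two $L^2$-isometries (multiplication by a unimodular function and a unitary operator from the spectral calculus of $L$), so $\|T_{it}\|_{L^2\to L^2}\le 1$; at the right boundary use \thref{pro:L2} via the pointwise bound. This yields $R_V^a$ bounded on $L^2$ for every $a\in[0,1/2]$.

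\emph{Filling the trapezoid.} For each $a^*\in[0,1/2]$, applying Stein to $T_z=R_V^{(1-z)a^*+z}$ on the strip $0\le\mathrm{Re}(z)\le 1$ with the $L^2$-bound from the left edge at $\mathrm{Re}(z)=0$ and the $L^1$-bound of $R_V^1$ at $\mathrm{Re}(z)=1$ produces $R_V^{(1-\theta)a^*+\theta}$ bounded on $L^{p_\theta}$ with $1/p_\theta=(1-\theta)/2+\theta$. Letting $(a^*,\theta)$ vary over $[0,1/2]\times[0,1]$ sweeps out the subregion $\{(1/p,a):1\le p\le 2,\ 2/p-1\le a\le 1/p\}$ of the target trapezoid.

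\emph{Main obstacle.} The crux is the remaining region $0\le a<2/p-1$. A naive analytic family interpolating $R_V^0=\mathrm{I}$ on some $L^p$ with $p\ne 2$ against another endpoint would place $R_V^{it}=M_{V^{it}}L^{-it}$ on the boundary of the strip at $\mathrm{Re}(z)=0$, requiring boundedness of $L^{-it}$ on $L^p$ for $p\ne 2$, which is generically unavailable for $V\in L^2_{\rm loc}$. I would resolve this by iterating Stein's scheme: once the previous paragraph produces $R_V^{a_0}$ on $L^{p_0}$ for many pairs $(a_0,p_0)$ with $a_0>0$, both boundary lines of a subsequent interpolating strip lie in the region $\mathrm{Re}(z)>0$, so the pointwise domination applies on both sides and the new endpoints pair up without invoking imaginary powers on any $L^p\ne L^2$. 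A suitably chosen chain of such iterated interpolations, together with the trivial boundary case $a=0$, completes the trapezoid.
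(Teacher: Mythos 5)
Your first two interpolation steps (the diagonal $a=1/p$ and the left edge $p=2$) are correct and essentially reproduce the paper's route: the diagonal is exactly the argument in \thref{thm:intL1}, where the pointwise domination $|R_V^z f|\leqslant\frac{\Gamma(\Rea z)}{|\Gamma(z)|}R_V^{\Rea z}|f|$ together with $|\Gamma(a_j+i\tau)|^{-1}\lesssim e^{\pi|\tau|/2}$ yields boundary bounds without imaginary powers, and the left edge is the $L^2$ observation underlying \thref{pro:L2} and \thref{cor:intLp}.

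The ``main obstacle'' you identify, however, is not actually an obstacle, and this is the crux of the misunderstanding. You assert that boundedness of $L^{-i\tau}$ on $L^p$ for $p\neq 2$ is ``generically unavailable for $V\in L^2_{\rm loc}$.'' This is false. Because $V\geqslant 0$, the operator $L=-\tfrac12\Delta+V$ (and each $L+\varepsilon I$) generates a symmetric contraction semigroup on all $L^p$, $1\leqslant p\leqslant\infty$, so Cowling's theorem \cite{Hanonsemi} gives $\|(L+\varepsilon I)^{-i\tau}\|_{L^p\to L^p}\lesssim e^{\pi|\tau|/2}$ uniformly in $\varepsilon>0$ for every $p\in(1,\infty)$. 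This is precisely the input to \thref{thm:intLp}, and it is what the paper uses (via \thref{cor:intLp}) to pass from the point $a=1/p$ to the whole segment $0\leqslant a\leqslant 1/p$ on a fixed $L^p$. Since the paper places $V^z$ under a truncation $\ind{F(\varepsilon)}$, the issue of $V$ vanishing on a positive-measure set --- which your writing of $V^{i\tau}$ as a unimodular multiplier on $L^2$ quietly ignores --- is also handled.

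Even granting your premise, the proposed iteration cannot close the gap. After your ``filling the trapezoid'' step the set of known $(1/p,a)$ pairs is exactly the closed triangle with vertices $(\tfrac12,0)$, $(\tfrac12,\tfrac12)$, $(1,1)$, which is convex. Any two-endpoint Stein interpolation of type \thref{thm:intL1} (both real parts strictly positive) can only produce points on the segment joining two already-known points, and by convexity every such segment lies inside the triangle you already have. No chain of such interpolations, however long, escapes the convex hull, so the region $\{0\leqslant a<2/p-1,\ 1<p<2\}$ remains unreachable by your scheme. The only way to interpolate into it is to use $a=0$ as a genuine endpoint on some $L^{p_0}$ with $p_0\neq 2$, which forces the boundary real part to zero and brings the imaginary powers $L^{-i\tau}$ on $L^{p_0}$ back into play --- the very thing you were trying to avoid, but which Cowling's theorem freely supplies.
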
 
\noindent \thref{thm:genLpint} is derived as a consequence of the endpoint bounds for $R_V^{1/2}$ on $L^2(\R^d)$ (\thref{pro:L2}) and for $R_V^1$ on $L^1(\R^d)$ (\cite[Theorem 4.3]{AuBA}, see also \cite{GallMor, katoLp}) together with the interpolation result given below.
\begin{theoremx}[\thref{thm:intL1}]
	\thlabel{thm:intLpint}
	Let $a_0>0$ and $a_1>0.$  Assume that $V\in L^1_{\rm loc}$ is such that $R_V^{a_1}$ is bounded on $L^{p_1}$ for some $p_1\in [1,\infty)$ and $R_V^{a_0}$ is bounded on $L^1.$ Then, $R_V^a$ is bounded on $L^p$ for every $p$ and $a$ such that $1/p=\theta+(1-\theta)/p_1$ and $a=\theta a_0 +(1-\theta)a_1$ with some $\theta\in (0,1).$
\end{theoremx}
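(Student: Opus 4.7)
The plan is to derive this result from Stein's complex interpolation theorem for analytic families of operators. Introduce
\[
T_z := M_{V^z}\circ L^{-z}, \qquad z\in \overline{S}:=\{w\in\C: a_0\leqslant \Rea w \leqslant a_1\},
\]
where $L=-\tfrac12\Delta+V$, $M_g$ denotes pointwise multiplication by a function $g$, and $V^z(x):=V(x)^{\Rea z}\exp(i\,\Ima z\cdot \log V(x))$, with the convention that $V^z(x)=0$ whenever $V(x)=0$. On the real axis $T_a=R_V^a$, so it suffices to establish $L^p$-boundedness of $T_a$ at the interior points of $\overline{S}$ corresponding to $1/p=\theta/p_0+(1-\theta)/p_1$ and $a=\theta a_0+(1-\theta)a_1$.

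The central step is to obtain bounds with admissible growth on the two vertical boundary lines $\Rea z=a_j$, $j=0,1$. Using that pointwise multipliers commute,
\[
T_{a_j+it}\;=\;M_{V^{it}}\circ R_V^{a_j}\circ L^{-it}.
\]
The outer factor $M_{V^{it}}$ is a contraction on every $L^p$ since $|V^{it}(x)|\leqslant 1$. The middle factor $R_V^{a_j}$ is bounded on $L^{p_j}$ by hypothesis, with norm independent of $t$. For the imaginary power $L^{-it}$, the Feynman--Kac formula shows that $\{e^{-tL}\}_{t\geqslant 0}$ is positivity preserving, symmetric on $L^2$, and simultaneously a contraction on $L^\infty$, hence a symmetric diffusion semigroup in Stein's sense. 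Stein's theorem on imaginary powers of generators of such semigroups then yields $\|L^{-it}\|_{L^{p_j}\to L^{p_j}}\leqslant C_{p_j}(1+|t|)^{\alpha_{p_j}}$, which is of admissible (in fact polynomial) growth for Stein interpolation; this is where the hypothesis $p_j\in(1,\infty)$ enters crucially.

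Granted these boundary bounds, Stein's interpolation theorem for analytic families delivers the $L^p$-boundedness of $T_a$ on the segment joining $(a_0,1/p_0)$ and $(a_1,1/p_1)$ in the $(\alpha,1/p)$-plane, which is precisely the claim. The main technical obstacle I expect is the preliminary setup rather than the interpolation itself: one must fix a dense subspace of $L^{p_0}\cap L^{p_1}$ (for instance the Schwartz class, or $C_c^\infty(\R^d)$ intersected with the appropriate functional-calculus domain) on which $z\mapsto T_z f$ is well-defined, analytic on the interior of $S$, continuous up to the boundary, and on which the above factorization holds literally. Once this standard but care-demanding verification is carried out, the factor-by-factor estimates above combined with Stein's machinery give the theorem.
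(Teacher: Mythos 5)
Your high-level strategy is the same as the paper's: factorize the boundary operator as (multiplier)$\,\circ R_V^{a_j}\circ\,$(imaginary power), control the imaginary powers of $L$, and run Stein's complex interpolation. But the ``preliminary setup'' you flag at the end is not merely a care-demanding formality --- it is where the actual mathematical work is, and as written the proposal has two genuine gaps there.

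First, the operators $T_z=M_{V^z}L^{-z}$ are not bounded operators on any $L^p$ at the outset: $\lambda\mapsto\lambda^{-z}$ is unbounded near $0$ for $\Rea z>0$ (so $L^{-z}$ is unbounded, since $0$ can be in the spectrum of $L$), and $V$ itself is only locally $L^2$, so the multiplier $V^{\Rea z}$ can be unbounded. You cannot appeal to Stein interpolation without a holomorphic family of \emph{bounded} operators of admissible growth, and choosing a dense subspace does not by itself make the intermediate composition $L^{-z}f$ land where $M_{V^z}$ is applicable. The paper's fix is a double regularization: it replaces $L^{-z}$ by $(L+\varepsilon I)^{-z}$ (a bounded operator via the spectral theorem) and replaces $V^z$ by $\ind{F(\varepsilon)}V^z$ with $F(\varepsilon)=\{\varepsilon<V<\varepsilon^{-1}\}$, proves the interpolation bound uniformly in $\varepsilon>0$, then passes $\varepsilon\to 0^+$ by monotone convergence. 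This is exactly the step your proposal postpones without resolving.

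Second, the bound on $\|L^{-it}\|_{L^{p_j}\to L^{p_j}}$ is not an application of Stein's theorem for symmetric diffusion semigroups: that theorem requires the semigroup to be conservative ($e^{-tL}\ind{}=\ind{}$), which fails here for nontrivial $V\geqslant 0$. The paper instead cites Cowling's theorem on functional calculus for symmetric \emph{contraction} semigroups, giving $\|(L+\varepsilon I)^{-i\tau}\|_{p_j}\lesssim e^{\pi|\tau|/2}$ uniformly in $\varepsilon$. That is exponential rather than polynomial, but still of admissible growth, so the interpolation goes through. Your conclusion is correct, but the intermediate citation is to a theorem whose hypotheses are not satisfied here.
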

\noindent The above theorem is proved via Stein's complex interpolation theorem. It is worth emphasizing that when $p\in (1,2]$ the boundedness of $R_V^a$ stated in \thref{thm:genLpint} holds not only for $a=1/p$ but for all smaller $a$ as well. This follows from \thref{thm:intLpint} together with \thref{cor:intLp}. However, this may be no longer true when $p=1.$ The reason behind is eminent in the proof of \thref{thm:intLpint} (\thref{thm:intL1}); namely, the imaginary powers $L^{iu},$ $u\in \RR$, are bounded on $L^p,$ $p\in (1,2]$, but are unbounded on $L^1.$

The main purpose of our paper is to study the $L^{\infty}$ and $L^{1}$  boundedness of $R_V^a$ for specific classes of non-negative potentials $V$. We focus on obtaining results for which only large values of $x$ matter and which are resistant to small perturbations of the potential $V.$ Considering the $L^{\infty}$ boundedness of $R^a_V$ two particular cases of $V$ serve as a good example of the possible situation. Firstly, if $V$ is a non-negative constant function, say $V=c$, then $L=-\tfrac{\Delta}{2}+c$ and by \eqref{eq:Rdef} we have 
\[
R^a_c f=\frac{c^a}{\Gamma(a)}\int_{0}^{\infty}e^{-tc}t^{a-1}e^{t\Delta/2}f\,dt.
\]
Therefore, using the $L^{\infty}$ contractivity of the heat semigroup $e^{t\Delta}$ we easily see that $R^a_c$ is bounded on $L^{\infty}.$ Secondly, if $d\geqslant 3$ and $V\in L^q,$ $q>d/2,$ is a non-zero compactly supported function, then $R^a_V$ is unbounded on $L^{\infty}$ for all $a>0,$ see \thref{pro:cexlinf}. Thus,  the fact that $V$ does not vanish outside a compact set is necessary for the boundedness of $R^a_V$ on $L^{\infty}.$

In what follows for two functions $A,B\colon \R^d\to [0,\infty)$ by $A(x)\approx B(x)$ we mean that for almost all $x \in \R^d$ we have $cA(x)\leqslant B(x)\leqslant CA(x)$ with two universal constants $0<c<C.$ We say that $A\approx B$ globally if $A(x)\approx B(x)$ for almost every $x$ outside a compact set.  The main classes of examples on $L^{\infty}(\R^d)$ which our theory admits are given below.
\begin{theoremx}
	\thlabel{thm:exLinfint}
	Let  $V\colon \R^d\to [0,\infty)$ be a function in $L_{\rm loc}^\infty$. Then in all the three cases
	\begin{enumerate}
		\item $V(x)\approx 1$ globally
		\item For some $\alpha>0$ we have $V(x)\approx |x|^{\alpha}$ globally
		\item For some $\beta>1$ we have $V(x)\approx \beta^{|x|}$ globally
	\end{enumerate}
	each of the Riesz transforms $R_V^a,$ $a>0$, is bounded on $L^{\infty}(\R^d).$ 
\end{theoremx} 
\noindent What lies at the heart of the proof of \thref{thm:exLinfint}  is the Feynman--Kac formula. \thref{thm:exLinfint} is restated as \thref{cor:linf} in Section \ref{sec:linf}, where it is deduced from  \thref{th:linf}. In order to apply \thref{th:linf} we need to verify two assumptions. Firstly $V$ must be strictly positive far away along a line in $\R^d.$ In this case \thref{lem21} guarantees an exponential decay of the semigroup $e^{-tL}$ on $L^{\infty}(\R^d)$. Secondly, we assume a specific interplay between the value $V(x)$ and the speed at which $V(y)$ decreases for $y$ in a ball around $x.$ The interplay is captured in condition \eqref{th:linf:eq1} (the quantity $I^a(V)(x)$ being defined in \eqref{eq:def:IaV}). It is easily verified that the assumptions of \thref{th:linf} are met in all the cases (1), (2), (3) of \thref{thm:exLinfint}.

We also prove an $L^1(\R^d)$ counterpart of \thref{thm:exLinfint} 
\begin{theoremx}
	\thlabel{thm:exL1int}
	Let  $V\colon \R^d\to [0,\infty)$ be a function in $L_{\rm loc}^\infty$. Then in all the three cases
	\begin{enumerate}
		\item $V(x)\approx 1$ globally
		\item For some $\alpha>0$ we have $V(x)\approx |x|^{\alpha}$ globally
		\item For some $\beta>1$ we have $V(x)\approx \beta^{|x|}$ globally
	\end{enumerate}
	each of the Riesz transforms $R_V^a,$ $a>0$, is bounded on $L^{1}(\R^d).$ 
\end{theoremx}
\noindent The proof of \thref{thm:exL1int} also makes extensive use of the Feynman--Kac formula. However, such an approach seems better suited to $L^{\infty}(\R^d)$ estimates and thus the route to \thref{thm:exL1int} is more complicated than in \thref{thm:exLinfint}. All the needed ingredients are justified in Section \ref{sec:l1}. \thref{thm:exL1int} is restated there as \thref{cor:l1} and the results needed to prove this corollary include \thref{th:l1}  and \thref{th:l1ho}. Note that in these results apart from condition \eqref{th:linf:eq1} we need to control the speed at which $V(y)$ increases for $y$ in a ball around $x$ relative to the value of $V(x)$. This is similar to the conditions assumed in the case of $L^\infty$ bounds.

Using \thref{thm:exLinfint,thm:exL1int} for $a=1,$ together with the argument from \cite[Proof of Corollary 1.4, pp.\ 174--175]{UrZien1}, we may also obtain $L^p(\R^d),$ $1<p<\infty$, boundedness of the Riesz transforms $|\nabla L^{-1/2} f|;$ here $\nabla$ denotes the usual gradient on $\R^d$. As this is aside the main considerations of our paper we do not pursue it here.


The topic of Riesz transforms related to Schr\"odinger operators has been considered by a number of authors, both on $\RR^d$ and on more general manifolds, see \cite{AO,AuBA,BadrBenAli,DeDiYao,Dev,Dz1,DzGl1,Sik1,UrZien1}.
In the context of the Riesz transforms $R_V^a$	the case $a=\frac{1}{2}$ has attracted most attention. For a general $V\in L^2_{\rm loc}$ it is known that $R^{1/2}_V$ is  bounded on the $L^p(\RR^d)$ spaces $1<p\leqslant 2,$ see Sikora \cite[Theorem 11]{Sik1}. Our \thref{thm:genLpint} extends the $L^p(\R^d)$ boundedness to $R^a_V$ for $a\leqslant 1/p.$ When the potential $V$ is in the reverse H\"older class $B_q$ for some $q\geqslant d/2,$ then Shen proved that $R_V^{1/2}$ is bounded on $L^p(\RR^d)$, $1 \leqslant p \leqslant 2q$, see \cite[Theorem 5.10]{Shen1}, and that $R_V^{1}$ is bounded on $L^p(\RR^d)$, $1 \leqslant p \leqslant q$, see \cite[Theorem 3.1]{Shen1}. Both results were later improved by Auscher and Ben Ali, see \cite[Theorem 1.1 and Theorem 1.2]{AuBA} to $1 < q \leqslant \infty$. In particular this is true for $V$ being a non-negative polynomial on $\RR^d.$ In fact, for such a $V$ the Riesz transforms $R^a_V,$ $a\geqslant 0,$ are bounded both on $L^1(\R^d)$ and $L^{\infty}(\R^d)$; this was proved by Dziubański \cite[Theorem 4.5]{Dz1}.  His proof uses nilpotent Lie group techniques for which it is important that $V$ is a polynomial. Moreover, in the particular case of $V(x)=|x|^2$ Bongioanni and Torrea \cite[Lemma 3]{BT} proved the $L^p(\RR^d),$ $1\leqslant p\leqslant \infty$, boundedness of $R_V^a$ for all $a>0$ by using explicitly the Mehler formula. Our proofs of \thref{thm:exLinfint,thm:exL1int} do not require explicit formulas  and the examples listed there are resistant to small perturbations; for instance, we may take $V(x)=\abs{x}^{\alpha}+E(x)$ with $\alpha>0,$ whenever the error term $E$ is a locally bounded function of a lower order than  $|x|^{\alpha}$ for large values of $|x|.$ 
	

The $L^{\infty}$ boundedness of $R^1_V$ was addressed by Urban and Zienkiewicz in \cite{UrZien1}. In \cite[Theorem 1.1]{UrZien1} the authors proved the $L^{\infty}(\RR^d)$ boundedness of $R^1_V$ under the assumption that $V$ is a non-negative polynomial satisfying a certain condition of C. Fefferman. This condition is of an algebraic nature. The estimates depend only on properties of the polynomial $V$ and are independent of the dimension. Recently, the first author proved a dimension-free $L^{\infty}$ bound for $R^{1/2}_V$ in the particular case of $V(x)=|x|^2$  and $L$  being the harmonic oscillator, see \cite[Theorem 8]{Ku1}. In fact it is proved there that the $L^{\infty}$ norm of  $R^{1/2}_{|x|^2}$ is less than $3.$ It is not clear whether one can prove dimension-free results on $L^{\infty}$  as in \cite{UrZien1} or \cite{Ku1} for $R^{1}_V$ or  $R^{1/2}_V$ for more general classes of potentials $V$. We hope to return to this topic in the near future.

It is perhaps noteworthy that in order to conclude the $L^p(\R^d),$ $p>2,$ boundedness of $R_V^{1/2}$, $R_V^1$ or $|\nabla L^{-1/2}|$ the results available in the literature require that $V$ satisfies at least a reverse H\"older condition. Such a $V$ must then be a doubling weight. This is not required in our approach, for instance $V(x)=\beta^{|x|}$ is clearly non-doubling yet \thref{thm:exLinfint,thm:exL1int} apply.

We shall now describe the structure of our paper. Section \ref{sec:def} starts with definitions of the objects appearing throughout the paper. Then we prove several interpolation results for the Riesz transforms $R_V^a,$ see \thref{thm:intLp,thm:intL1} and  \thref{cor:intLp}. As an application, in \thref{thm:genLp} we obtain $L^p$ boundedness of $R_V^a$ for general non-negative potentials  $V\in L_{\rm loc}^2$ within the  range  $1<p\leqslant 2,$ $0\leqslant a\leqslant 1/p.$  In Section \ref{sec:linf} we prove \thref{th:linf} which gives sufficient conditions for the $L^\infty$ boundedness of $R_V^a$ and then we apply it to prove \thref{thm:exLinfint}. Section \ref{sec:l1} is devoted to proving \thref{th:l1,th:l1a<1,th:l1ho} in which we present different conditions on $V$, $a$ and $p$ guaranteeing the $L^1$ boundedness of $R_V^a$ and as a corollary \thref{thm:exL1int} is proved.

\subsection*{Notation}
Throughout the paper for $1\leqslant p\leqslant \infty$ we denote by $L^p$  the $L^p(\R^d)$ space with respect to the $d$-dimensional Lebesgue measure.  For a function $f\in L^p$ we write $\|f\|_p \coloneqq \|f\|_{L^p(\R^d)}.$ Similar notation is also used for a bounded linear operator $T$ on $L^p;$ by $\|T\|_p$ we denote its norm. Although this is a slight collision of symbols it will cause no confusion later. For a Lebesgue-measurable subset $A\subseteq \R^d$ we denote by $|A|$ its Lebesgue measure. We say that $f$ is a finitely simple function if it is a simple function supported in a compact subset of $\R^d.$ Such functions are clearly dense in $L^p,$ $1\leqslant p<\infty.$  For a set $A$ we denote by $\ind{A}$ its characteristic function. The symbol $\ind{}$ stands for the constant function $1.$ For $1\leqslant p\leqslant \infty$ we denote by $L^{p}_{\rm loc}$ the space of functions which are locally in $L^p.$  For $f\in L^1_{\rm loc}$ we denote by $\supp f$ its essential support. The space of smooth compactly supported functions on $\R^d$ is denoted by $C_c^{\infty}.$ For $x\in \R^d$ and $r>0$  we denote by $B(x,r):=\{y\in \R^d\colon |x-y|\le r\}$ the closed Euclidean ball of radius $r$.

The symbol $C_{\square }$ denotes a non-negative constant that depends only on the parameter $\square.$ The exact value of $C_{\square }$ may change from one occurrence to another. We write $C$ without subscript when the constant is universal in the sense that it may only depend on the dimension $d$ or on the parameter of the Riesz transform $a>0.$

It will be convenient to introduce an asymptotic notation. For two non-negative quantities
$A, B$ we write $A \lesssim B$ ($A \gtrsim B$) if
there is an absolute constant $C>0$  such that $A\leqslant CB$ ($A\geqslant C B$). 
We
will write $A \approx B$ when $A \lesssim B$ and
$A\gtrsim B.$ In particular, if $A=A(x)$ and $B=B(x)$ are two non-negative functions on $\R^d$ then by $A\lesssim B$ we mean that
$A(x)\leqslant C B(x)$ for almost all $x\in \R^d;$ similar convention is applied to the symbols $\gtrsim$ and $\approx.$ We say that a function $B\colon \R^d \to [0,\infty)$ controls a function $A\colon \R^d\to  [0,\infty)$ globally if there exists a compact set $F$ such that $A(x)\leqslant B(x)$ for almost all $x\not\in F.$ In this case we write $A\leg B.$ Similarly, we say that any of the conditions $A\lesssim B,$ $A\gtrsim B$ or $A\approx B$ holds globally if there exists a compact set $F$ such that  $A(x)\lesssim B(x),$ $A(x)\gtrsim B(x)$ or $A(x)\approx B(x),$ respectively, hold for almost every $x\not\in F.$ In this case we write, respectively, $A\lesg B,$ $A\gtrsg,$ and $A \ag B.$

For a random variable $X$ defined on a probability space $(\Omega, \mathcal{F}, \P)$ and $A \subseteq \R$ we denote $\P(X \in A) \coloneqq \P\left( \left\{ \omega \in \Omega: X(\omega) \in A \right\} \right)$. We abbreviate \emph{almost everywhere} and \emph{almost every} to \emph{a.e}.

\subsection*{Acknowledgments} We are most grateful to the anonymous referee for the careful reading of the paper and helpful suggestions which helped us to improve considerably
the manuscript.



\section{Definitions and general results on $L^p,$ $1\le p<\infty$}
\label{sec:def}

The main goal of this section is to define the Riesz transforms $R_V^a,$ $a>0,$ on $L^p$  and to prove $L^p$ boundedness results for these operators valid for general classes of non-negative potentials $V.$ Throughout this section we take $1\le p<\infty.$ The case of $p=\infty$ is addressed in the next section.

Our general definition on $L^p$ will be based on semigroups related to $-\frac12\Delta+V$ that are given by the spectral theorem. Let $V\in L^1_{\rm loc}$ be an a.e. non-negative potential. This assumption is in force throughout the paper even if this is not stated explicitly. Whenever we write $V(x)$ we mean the value at $x$ of a particular representative of the equivalence class of $V$ in the space $L_{\rm loc}^1$. The same is true for any expression in which similar ambiguity may arise. We follow closely the approach in \cite[Section 3]{AuBA} (see also \cite{davies}) and define the Schr\"{o}dinger operator $L$ via quadratic forms. Consider the sesquilinear form
\begin{equation} \label{eq:Q}
	Q(u, v) = \int_{\R^d} \tfrac{1}{2} \innprod{\nabla u}{\nabla v} + Vu\overline{v}
\end{equation}
on the domain
\[
	\dom(Q) = \{f \in L^2: \nabla f \in L^2 \text{ and } V^{1/2}f \in L^2\},
\]
where $\nabla f$ denotes the distributional gradient of $f$.
We equip the domain with the norm
\[
	\norm{f}_V = \left( \norm{f}_2^2 + \tfrac{1}{2} \norm{\nabla f}_2^2 + \norm{V^{1/2}f}_2^2 \right)^{1/2},
\]
which turns it into a Hilbert space with $C_c^\infty(\R^d)$ as a dense subspace. Since $Q$ is bounded below and non-negative, there is a unique positive self-adjoint operator $L$ such that
\[
	\innprod{Lu}{v} = Q(u, v), \quad u \in \dom(L), \, v \in \dom(Q)
\]
and its square root $L^{1/2}$, defined on $\dom(L^{1/2}) = \dom(Q)$, satisfies
\begin{equation} \label{eq:L12}
	\norm{L^{1/2} f}_2^2 = \tfrac{1}{2} \norm{\nabla f}_2^2 + \norm{V^{1/2} f}_2^2, \quad f \in C_c^\infty(\R^d).
\end{equation}
By \cite[Section 3]{AuBA} the semigroup $e^{-tL}$ is positivity-preserving and pointwise dominated by the heat semigroup, hence it is a contraction on $L^p$ for $1 \leqslant p \leqslant \infty$.

Let $a>0.$  For $f\in L^p,$ $1\leqslant p<\infty$, and $\varepsilon>0$ we define
\begin{equation}
\label{def:Lalime}
(L+\varepsilon I)^{-a}f  \coloneqq \frac{1}{\Gamma(a)}\int_0^{\infty} e^{-tL} f\,t^{a-1}e^{-\varepsilon t}\,dt,
\end{equation}  
Since  the semigroup $e^{-tL}$ is a strongly continuous semigroup of contractions on $L^p$, the integral in \eqref{def:Lalime} is well defined as a Bochner integral on $L^p$. It is also not hard to see that for $f\in L^2$ the operator defined by \eqref{def:Lalime} coincides with $(L+\varepsilon I)^{-a}$ given by the spectral theorem. Moreover, if $f$ is an a.e. non-negative function in $L^p$ then 
\begin{equation}
\label{def:Lalim}
L^{-a}f(x)  \coloneqq \lim_{\varepsilon\to 0^+}\frac{1}{\Gamma(a)}\int_0^{\infty} e^{-tL} f(x)\,t^{a-1}e^{-\varepsilon t}\,dt,
\end{equation}  
exists $x$-a.e.\ as a monotone pointwise limit being finite or infinite.
In either case
\begin{equation}
\label{def:La}
L^{-a}f(x) =\frac{1}{\Gamma(a)}\int_0^{\infty} e^{-tL} f(x)\,t^{a-1}\,dt,
\end{equation} 
by the monotone convergence theorem.
For $a>0$ and a non-negative function $f\in L^p$ we let
\begin{equation}
\label{eq:Rav}
R_V^a f(x) \coloneqq V^a(x)L^{-a}f(x),\qquad x\in \R^d.
\end{equation}
This is well defined $x$-a.e.\ though possibly equal to infinity. Additionally,  for $a=0$ we set $R_V^0$ to be the identity operator.

\begin{df}
	\label{def:Ravb}
	Let $1\leqslant p<\infty$ and $a>0.$ We say that the Riesz transform $R_V^a$ is bounded on $L^p$ if there is a constant $C>0$ such that
	\begin{equation}
	\label{eq:Ravb}
	\|R_V^a f\|_p\leqslant C \|f\|_{p},
	\end{equation}   
	for all non-negative finitely simple functions $f\in L^p.$ 
\end{df}

Note that if $R_V^a$ is bounded on $L^p$, then for each finitely simple function $f$ the quantity $R_V^a |f|$ given by \eqref{eq:Rav} is finite for a.e.\ $x\in \R^d.$ Since $|e^{-tL}f|\leqslant e^{-tL}|f|$  we see that in this case 
\begin{equation*}
V^a(x)\int_0^{\infty} e^{-tL} f(x)\,t^{a-1}\,dt
\end{equation*}
is finite $x$-a.e.. Thus, whenever $R_V^a$ is bounded on $L^p$ the integral above is a natural definition of  $R_V^a f,$ first for finitely simple functions  and then, by density, for arbitrary functions in $L^p$.

Using Stein's complex interpolation theorem and functional calculus for symmetric contraction semigroups \cite{Hanonsemi} we now prove an interpolation result for the operators $R_V^a.$ Similar method was applied in \cite[Section 6]{AuBA}. There the authors proved the $L^p$ boundedness of $R_{V}^{1/2}$ for $1<p<2(q+\varepsilon)$ by using Stein's complex interpolation theorem together with the $L^p$ boundedness of $R_V^1.$ They considered non-negative potentials belonging to a reverse H\"older class $B_q$.    

\begin{theorem}
	\thlabel{thm:intLp}
	Let $0\leqslant a_0<a_1.$  Assume that $V\in L^1_{\rm loc}$ is an a.e. non-negative potential such that $R_V^{a_0}$ is bounded on $L^{p_0}$ and $R_V^{a_1}$ is bounded on $L^{p_1}$ for some $p_0,p_1\in (1,\infty).$ Then, $R_V^a$ is bounded on $L^p$ for every $p$ and $a$ such that $1/p=\theta/p_0+(1-\theta)/p_1$ and $a=\theta a_0 +(1-\theta)a_1$ with some $\theta\in (0,1).$
\end{theorem}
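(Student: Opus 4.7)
The plan is to apply Stein's complex interpolation theorem to the analytic family
\[
T_z f = V^{a(z)} L^{-a(z)} f, \qquad z\in \overline{S}=\{z\in\C : 0\leqslant \Rea z\leqslant 1\},
\]
with $a(z) = (1-z)a_1 + z a_0$, so that $T_0 = R_V^{a_1}$, $T_1 = R_V^{a_0}$ and $T_\theta = R_V^a$ for the particular $\theta$ in the statement. On the boundary line $\Rea z = 0$, writing $s = u(a_0-a_1)$, the spectral calculus for the non-negative self-adjoint operator $L$ gives the factorisation
\[
T_{iu} f \;=\; V^{a_1+is}L^{-a_1-is} f\;=\; V^{is}\,\bigl(V^{a_1}L^{-a_1}\bigr)\,L^{-is} f \;=\; V^{is}\, R_V^{a_1}\, L^{-is} f,
\]
and analogously $T_{1+iu} = V^{is}\, R_V^{a_0}\, L^{-is}$. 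Each factor is bounded on the relevant Lebesgue space: $V^{is}$ is an $L^{p_j}$-contraction since $|V^{is}(x)|\leqslant 1$ pointwise; $R_V^{a_j}$ is bounded on $L^{p_j}$ by hypothesis; and the imaginary powers $L^{-is}$ are bounded on $L^{p_j}$ with at most subexponential norm in $|s|$ by the functional calculus for symmetric contraction semigroups \cite{Hanonsemi}. Combining these three contributions yields $\|T_{iu}\|_{p_1\to p_1}\leqslant M_1(u)$ and $\|T_{1+iu}\|_{p_0\to p_0}\leqslant M_0(u)$ with $\log M_j(u) = O(|u|)$ as $|u|\to\infty$.

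To put Stein's theorem to work one must also exhibit $z\mapsto \langle T_z f, g\rangle$ as a function analytic on the open strip, continuous up to the boundary, and of admissible growth in $|\Ima z|$, for $f,g$ in a dense class. Taking $f,g$ finitely simple, $V^{a(z)}(x)$ is a measurable function on $\supp f\cup\supp g$ depending holomorphically on $z$ (defined to be zero where $V$ vanishes), while the integral representation \eqref{def:Lalime} expresses $(L+\varepsilon I)^{-a(z)}f$ as a Bochner integral that is $L^2$-valued holomorphic in $z$; when $a_0=0$ the regulariser $\varepsilon>0$ is needed at $z=1$, and is removed at the very end via monotone or dominated convergence using \thref{def:Ravb}. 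Admissibility then reduces to polynomial-in-$|\Ima z|$ growth at the $L^2$-level, which is comfortably allowed.

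With (i) analyticity, (ii) boundary bounds and (iii) Stein's theorem in hand, the conclusion is that $T_\theta = R_V^a$ is bounded on $L^p$ with $1/p = \theta/p_0 + (1-\theta)/p_1$, first on finitely simple functions and then, via \thref{def:Ravb}, for all of $L^p$. The main obstacle is the bookkeeping in (i): reconciling the pointwise complex power $V^{a(z)}$ with the spectrally defined operator $L^{-a(z)}$ on a dense subspace shared by $L^{p_0}$, $L^{p_1}$ and $L^2$, and neutralising the endpoint $a_0=0$ via the regularisation from \eqref{def:Lalime}. Once these technicalities are settled, the boundary factorisation and the application of Stein's theorem are straightforward.
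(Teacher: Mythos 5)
Your approach is the paper's approach: set up an analytic family $T_z$ of the form (power of $V$) $\cdot$ (complex power of the resolvent), bound the boundary operators via the factorisation $T_{a_j+i\tau}=V^{i\tau}\,R_V^{a_j}\,(L+\varepsilon I)^{-i\tau}$ using $|V^{i\tau}|\leqslant 1$, the hypothesis on $R_V^{a_j}$, and the bound $\|(L+\varepsilon I)^{-i\tau}\|_{p_j}\lesssim e^{\pi|\tau|/2}$ from the functional calculus for symmetric contraction semigroups, and invoke Stein's interpolation theorem. (The paper works on the strip $a_0<\Rea z<a_1$ rather than reparametrising to $[0,1]$, but that is cosmetic.)

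There is, however, a genuine gap in the part you describe as ``bookkeeping'': you never truncate $V$ from above, and this is the device that makes the argument go through for $V\in L^2_{\rm loc}$. The paper replaces $V^z$ by $\ind{F(\varepsilon)}V^z$ with $F(\varepsilon)=\{x:\varepsilon<V(x)<\varepsilon^{-1}\}$, so that the multiplier is a bounded function of $x$ for every $z$ in the closed strip. That is what makes $T_z$ a bounded operator on $L^2$, makes the pairing $h(z)=\langle T_z f,g\rangle$ holomorphic and uniformly bounded (admissible growth), and keeps the pointwise domination $|T_{a_j+i\tau}f|\leqslant R_V^{a_j}|f|$ valid. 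In your version, for finitely simple $g$ the function $V^{a(z)}\bar g$ need not lie in $L^2$ when $\Rea a(z)$ is large, because $V$ is only locally $L^2$ and may be unbounded on the (compact) support of $g$; so neither the $L^2$-boundedness of $T_z$ in the interior nor the analyticity of $h(z)$ follows from what you wrote. Your handling of zeros of $V$ (set $V^{is}=0$ there) is fine, and your use of the $\varepsilon$-regularised resolvent $(L+\varepsilon I)^{-a(z)}$ is correct, but the upper truncation is equally essential. After interpolation, the paper removes the truncation by letting $\varepsilon\to 0^+$ with monotone convergence, first for non-negative simple $f$; that final limiting step should also be made explicit.
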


\begin{proof}
	Let $\varepsilon>0$ and denote $F(\varepsilon) \coloneqq \{x\in \R^d\colon \varepsilon<V(x)<\varepsilon^{-1}\}.$  It is enough to justify that 
	\[
		R^{a,\varepsilon}f(x) \coloneqq (\ind{F(\varepsilon)}V^{a})(x)\cdot \frac{1}{\Gamma(a)}\int_0^{\infty} e^{-tL}f(x)\,t^{a-1}e^{-\varepsilon t}\,dt,
	\]	
	satisfies for all simple functions $f$ the bound
	\begin{equation}
	\label{eq:Raeuni}
		\|R^{a,\varepsilon}f\|_p\leqslant C \|f\|_{p},
	\end{equation}
	uniformly in $\varepsilon>0$ and with $C>0$ being a constant. Indeed, if \eqref{eq:Raeuni} holds, then taking $\varepsilon\to 0^+$ we obtain the $L^p$ boundedness of $R_V^a,$ first (with the aid of monotone convergence theorem) for non-negative simple functions and then for all functions in $L^p.$
	
	Thus, in the remainder of the proof we fix $\varepsilon>0$ and focus on justifying \eqref{eq:Raeuni}. Denote $S=\{z\in \C\colon a_0 < \Rea z< a_1\}.$ Then, for $z\in \overline{S}$ and $\varepsilon>0$ the function $m_z^{\varepsilon}(\la)=(\la+\varepsilon)^{-z}$ is a bounded function on $[0,\infty),$ hence, by the spectral theorem  $(L+\varepsilon I)^{-z}$ is well defined as a bounded operator on $L^2.$ We let 
	\begin{equation}
	\label{eq:Tzdef}
		T_z f \coloneqq (\ind{F(\varepsilon)}V^{z})\cdot (L+\varepsilon I)^{-z}f,\qquad f\in L^2.
	\end{equation}
	Since $(L+\varepsilon I)^{-b}$ given by the spectral theorem coincides with
	\[
		\frac{1}{\Gamma(b)}\int_0^{\infty} e^{-tL} f\,t^{b-1}e^{-\varepsilon t}\,dt,
	\]
	for every $b>0,$ we have
	\[
		R^{b,\varepsilon} f =T_b f,\qquad f\in L^2.
	\]
	Thus, in order to justify \eqref{eq:Raeuni} it suffices to prove a uniform in $\varepsilon>0$ bound  for the $L^p$ norm of $T_a.$

	This will be achieved by Stein's complex interpolation theorem. Note first that for $f,g$ being finitely simple functions the pairing
	\[
		h(z)=\langle T_z f,g\rangle,\qquad z\in \overline{S}, 
	\]
	gives a function which is holomorphic in $S$. To see this observe that \eqref{def:Lalime} still holds  with complex $a \in S.$ Combining this observation with the definition \eqref{eq:Tzdef} of $T_z$ it is easy to see that $h$ is indeed holomorphic. Additionally, the spectral theorem implies the bound
	\begin{equation}
	\label{eq:growthin}
		|h(z)|\leqslant C(\varepsilon,f,g),
	\end{equation}
	valid for $z\in \bar{S}.$  Altogether  $\{T_z\}_{z\in \bar{S}}$ is an analytic family of operators of admissible growth.  
	
	It remains to bound the operator $T_z$ for $\Rea z=a_0$ and $\Rea z=a_1;$ this is the place where we use the assumptions on $R^{a_j}_V.$ Writing, for $z=a_j+i\tau,$ $\tau\in \RR,$ $j=0,1,$ 
	\[
		T_z=(\ind{F(\varepsilon)}V^{z})\cdot (L+\varepsilon I)^{-z}=(\ind{F(\varepsilon)}V^{i\tau})T_{a_j}(L+\varepsilon I)^{-i\tau}
	\]
	we see that
	\begin{equation}
	\label{eq:Tzes}
		\|T_z\|_{p_j}\leqslant \|T_{a_j}\|_{p_j}\|(L+\varepsilon I)^{-i\tau}\|_{p_j}.
	\end{equation}
	Since $(L+\varepsilon I)$ generates a symmetric contraction semigroup and $p_j\in(1,\infty),$ by e.g.\ \cite{Hanonsemi} the imaginary powers $(L+\varepsilon I)^{-i\tau}$ satisfy
	\begin{equation}
	\label{eq:Imeps}
		\|(L+\varepsilon I)^{-i\tau}\|_{p_j}\lesssim e^{\pi|\tau|/2},
	\end{equation}
	uniformly in $\varepsilon>0.$ Moreover, we have
	\[
		|T_{a_j} (f)(x)|=|R^{a_j,\varepsilon}f(x)|\leqslant R^{a_j}_V|f|(x),\qquad x\in \R^d.
	\]
	Thus, coming back to \eqref{eq:Tzes} and using our assumptions on the $L^{p_j}$ boundedness of $R^{a_j}_V$ we obtain, for $z=a_j+i\tau,$ $j=0,1,$
	\[
		\|T_z\|_{p_j}\lesssim  e^{\pi|\tau|/2},\qquad \tau\in\R.
	\]
	
	In conclusion, applying Stein's complex interpolation theorem, see e.g.\ \cite[Theorem 1.3.7]{grafakos}, we obtain the $L^p$ boundedness of $R_V^a$.
\end{proof}

\thref{thm:intLp} immediately leads to the following corollary.
\begin{cor}
	\thlabel{cor:intLp}
	Let $a_0\geqslant 0,$ $a_1\geqslant 0,$ and  assume that both $R_V^{a_1}$ and $R_V^{a_2}$ are bounded on $L^{p}$ for some $1 < p<\infty.$ Then $R_V^a$ is bounded on $L^p$ for every $a_0\leqslant a\leqslant a_1.$
\end{cor}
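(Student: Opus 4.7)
The plan is to derive the corollary as a direct specialization of the interpolation result \thref{thm:intLp}, with the two source exponents chosen to coincide, $p_0 = p_1 = p$. Since the hypothesis already supplies the $L^p$ boundedness of $R_V^{a_0}$ and $R_V^{a_1}$ at the endpoints, only the intermediate values $a \in (a_0, a_1)$ require any argument.

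For such an $a$, I would set $\theta = (a_1 - a)/(a_1 - a_0) \in (0,1)$, so that $a = \theta a_0 + (1-\theta) a_1$. Plugging $p_0 = p_1 = p$ into the interpolation identity $1/p_\theta = \theta/p_0 + (1-\theta)/p_1$ from \thref{thm:intLp} forces $p_\theta = p$, and the theorem then delivers the $L^p$ boundedness of $R_V^a$ as soon as $p \in (1,\infty)$. For the borderline case $p = 1$ the same reasoning applies with \thref{thm:intL1} playing the role of \thref{thm:intLp}.

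There is no real obstacle to overcome: the corollary is a purely formal consequence of the interpolation machinery built in the preceding theorems. The only point worth flagging is that \thref{thm:intLp} is restricted to $p_0, p_1 \in (1,\infty)$ because it relies on the $L^{p_j}$ bound \eqref{eq:Imeps} for the imaginary powers of $L+\varepsilon I$, which is precisely why a separate $L^1$ interpolation result must be invoked in the endpoint case $p = 1$.
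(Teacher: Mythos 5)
Your core argument coincides exactly with the paper's: its proof of this corollary is the single sentence ``We apply \thref{thm:intLp} with $p_0=p_1=p$.'' For $p\in(1,\infty)$ your reasoning is identical and correct.

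Your additional remark about the case $p=1$ is an attempt to close a gap that the paper's one-line proof leaves formally open, since \thref{thm:intLp} is stated only for $p_0,p_1\in(1,\infty)$. Invoking \thref{thm:intL1} there is the right instinct, but note that \thref{thm:intL1} requires \emph{both} $a_0>0$ and $a_1>0$, whereas the corollary permits $a_0=0$. Thus the edge case $p=1$, $a_0=0$ (i.e.\ interpolating between the identity $R_V^0$ and $R_V^{a_1}$ on $L^1$) remains uncovered by either your argument or the paper's. This is not an oversight on your part so much as a real limitation: the paper's introduction explicitly flags that the ``boundedness for all smaller $a$'' conclusion may fail at $p=1$, precisely because the imaginary powers $L^{iu}$ are unbounded on $L^1$. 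So your proposal is as complete as the paper's own treatment, and arguably slightly more careful in making the $p=1$ issue visible.
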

\begin{proof}
	We apply \thref{thm:intLp} with $p_0=p_1=p$.
\end{proof}

It is straightforward to see that the Riesz transform $R_V^{1/2}$ is bounded on $L^2.$ Using \thref{cor:intLp} we now extend the $L^2$ boundedness to the operators $R_V^{a}$ with $0\leqslant a\leqslant \frac{1}{2}.$ 
\begin{pro}
	\thlabel{pro:L2}
	Let $V\in L^1_{\rm loc}(\RR^d)$ be an a.e. non-negative potential. If $0\leqslant a\leqslant \frac{1}{2} $, then $R^{a}_V$ extends to a contraction on $L^2(\RR^d).$
\end{pro}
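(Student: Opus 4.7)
The plan is to reduce to the endpoint $a=1/2$ and then establish the contraction there via the quadratic form of $L$. The case $a=0$ is immediate since $R_V^{0}$ is by definition the identity. Granting the endpoint bound $\|R_V^{1/2}\|_{2}\leqslant 1$, the intermediate values $a\in(0,1/2)$ follow from (a refined reading of) \thref{cor:intLp} applied with $p_0=p_1=2$: on $L^2$ the imaginary powers $(L+\varepsilon I)^{-i\tau}$ are unitary rather than merely bounded, so the estimate \eqref{eq:Imeps} used inside the proof of \thref{thm:intLp} holds with constant $1$ and without the $e^{\pi|\tau|/2}$ loss. Hadamard's three lines then yields $\|T_a\|_2\leqslant \|T_0\|_2^{1-2a}\|T_{1/2}\|_2^{2a}=1$ in that setup.

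The heart of the matter is therefore the contraction of $R_V^{1/2}$. Here I would exploit the quadratic form inequality $V\leqslant L$ valid on the form domain $Q(L)=D(L^{1/2})$: for every $g$ in this domain,
\begin{equation*}
\int_{\R^d} V|g|^2\,dx \;\leqslant\; \tfrac{1}{2}\|\nabla g\|_2^2 + \int_{\R^d} V|g|^2\,dx \;=\; \langle Lg,g\rangle.
\end{equation*}
To apply this cleanly I would first regularize and work with $L_\varepsilon:=L+\varepsilon I$ for $\varepsilon>0$, for which $L_\varepsilon^{-1/2}$ is a bounded, positivity-preserving operator on $L^2$ given by the spectral theorem and consistent with \eqref{def:Lalime}. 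Setting $g=L_\varepsilon^{-1/2}f$ for $f\in L^2$, one has $g\in D(L_\varepsilon^{1/2})=Q(L)$, and the form bound above yields
\begin{equation*}
\|V^{1/2}g\|_2^2 \;\leqslant\; \langle Lg,g\rangle \;\leqslant\; \langle L_\varepsilon g,g\rangle \;=\; \langle L_\varepsilon^{1/2}L_\varepsilon^{-1/2}f,\,L_\varepsilon^{-1/2}f\rangle \;=\; \|f\|_2^2,
\end{equation*}
so $\|V^{1/2}L_\varepsilon^{-1/2}\|_{L^2\to L^2}\leqslant 1$ uniformly in $\varepsilon>0$.

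To conclude I would pass to the limit $\varepsilon\to 0^+$. For a non-negative finitely simple $f$, the integral representation shows that $L_\varepsilon^{-1/2}f$ is monotone increasing as $\varepsilon\downarrow 0$, and its pointwise limit coincides with $L^{-1/2}f$ as in \eqref{def:Lalim}. Monotone convergence then gives $\|V^{1/2}L^{-1/2}f\|_2\leqslant \|f\|_2$, which is precisely the contraction bound in the sense of \thref{def:Ravb}. The main obstacle is bookkeeping rather than conceptual: one must verify that the Bochner-integral definition \eqref{def:Lalime} of $L_\varepsilon^{-1/2}$ agrees with the spectral calculus on $L^2$, that $g=L_\varepsilon^{-1/2}f$ genuinely lies in $Q(L)$ so that the form inequality applies, and that the $\varepsilon$-limit is compatible with the definition of $R_V^{1/2}$ adopted in \thref{def:Ravb}.
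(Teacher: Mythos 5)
Your proposal is correct and follows essentially the same route as the paper: establish $\|V^{1/2}(L+\varepsilon I)^{-1/2}\|_{2\to 2}\leqslant 1$ from the form inequality $\langle Vg,g\rangle\leqslant\langle Lg,g\rangle$ on $\mathcal D(L^{1/2})$, pass to $\varepsilon\to 0^{+}$ by monotone convergence to get the $a=1/2$ contraction, and then interpolate down to $0<a<1/2$ while noting that $(L+\varepsilon I)^{-i\tau}$ is a contraction (indeed unitary up to domain) on $L^{2}$, so the Stein interpolation loses no constant. The only cosmetic difference is that the paper phrases the form inequality as $\|V^{1/2}f\|_2\leqslant\|L^{1/2}f\|_2$ first on $C_c^\infty$ and then extends by closedness, which is the same content as your direct appeal to the form domain.
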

\begin{proof}
	
	By formula \eqref{eq:L12} we have
	\begin{equation}
		\label{eq:V12Lin}
	\norm{V^{1/2} f}_2 \leqslant \norm{L^{1/2} f}_2,\qquad f \in C_c^\infty;
	\end{equation}
here $L^{1/2}$ is the self-adjoint operator with domain $\dom(L^{1/2})=\dom(Q),$ while $Q$ is the sesquilinear form given by \eqref{eq:Q}.
	Using the fact that self-adjoint operators are closed we get $\dom(L^{1/2}) \subseteq \dom(V^{1/2})$ and 
	\begin{equation} \label{eq:VL}
	\norm{V^{1/2} f}_2 \leqslant \norm{L^{1/2} f}_2,\qquad f \in \dom(L^{1/2}).
	\end{equation}
	For each fixed $\varepsilon>0$ the operator $(L+\varepsilon I)^{-1/2}$ is bounded on $L^2$ by the spectral theorem. Taking $f = (L+\varepsilon I)^{-1/2} g$ with $g \in L^2$ in \eqref{eq:VL} we get
	\begin{equation}
	\label{eq:VLeps}
	\norm{V^{1/2}(L+\varepsilon I)^{-1/2} g}_2 \leqslant \norm{L^{1/2}(L+\varepsilon I)^{-1/2}g}_2, \qquad g \in L^2.
	\end{equation}
	If $g$ is a non-negative function on $L^2$ then by definitions \eqref{def:Lalime}, \eqref{eq:Rav} and the monotone convergence theorem 
	we have
	$\lim_{\varepsilon\to 0^+} 	\norm{V^{1/2}(L+\varepsilon I)^{-1/2} g}_2=\|R_V^{1/2}g\|_2.$ The right-hand side of \eqref{eq:VLeps} converges to $\|g\|_2$ as $\varepsilon\to 0^+$ by the spectral theorem. Therefore we justified that $\|R_V^{1/2}g\|_2\leqslant \|g\|_2$ for non-negative $g\in L^2.$ This implies that $R_V^{1/2}$ is a contraction on $L^2.$
	
	At this stage an application of \thref{cor:intLp} shows that $R_V^{a}$ is bounded on $L^2$ whenever $0\leqslant a \leqslant 1/2.$ The contractivity of $R_V^a$ is not a direct consequence of the corollary. However, it is easy to justify once we follow the proof of \thref{thm:intLp} and enhance inequality \eqref{eq:Imeps} to
	\[
		\|(L+\varepsilon I)^{-i\tau}\|_{2}\leqslant 1,\qquad \tau\in \R.
	\]
	We omit details here.
	
\end{proof}

When $p_0=1$ we have a slightly weaker variant of \thref{thm:intLp} with the restriction $a_0, a_1>0.$ This is due to the unboundedness of the imaginary powers $L^{i\tau},$ $\tau\in\R,$ on  $L^1.$

\begin{theorem}
	\thlabel{thm:intL1}
	Let $a_0>0$ and $a_1>0.$  Assume that $V\in L^1_{\rm loc}$ is such that $R_V^{a_1}$ is bounded on $L^{p_1}$ for some $p_1\in [1,\infty)$ and $R_V^{a_0}$ is bounded on $L^1.$ Then, $R_V^a$ is bounded on $L^p$ for every $p$ and $a$ such that $1/p=\theta+(1-\theta)/p_1$ and $a=\theta a_0 +(1-\theta)a_1$ with some $\theta\in (0,1).$
\end{theorem}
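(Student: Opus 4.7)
The plan is to adapt the proof of \thref{thm:intLp}, sidestepping the one ingredient that fails on $L^1$: the bound \eqref{eq:Imeps} on imaginary powers of $L$. The idea is to leave the factor $1/\Gamma(z)$ \emph{inside} the analytic family, so that the imaginary part of $z$ is absorbed by $\Gamma(z)$ rather than by $L^{-i\tau}$. Fix $\varepsilon>0$, set $F(\varepsilon)=\{x\in\R^d\colon \varepsilon < V(x)<\varepsilon^{-1}\}$, and for $z$ in the closed strip $\overline{S}=\{z\in\C\colon a_0\leqslant \Rea z\leqslant a_1\}$ and a finitely simple $f$ define
\begin{equation*}
T_z f(x) := \ind{F(\varepsilon)}(x)\,V^{z}(x)\cdot \frac{1}{\Gamma(z)}\int_0^\infty e^{-tL}f(x)\,t^{z-1}e^{-\varepsilon t}\,dt.
\end{equation*}
Because $\Rea z\geqslant a_0>0$ and $e^{-tL}$ is an $L^p$-contraction for every $p\in[1,\infty)$, the integral converges absolutely. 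At real points $z=b\in[a_0,a_1]$ one has $T_b=R^{b,\varepsilon}$ in the notation of the proof of \thref{thm:intLp}.

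Analyticity of $z\mapsto \langle T_z f,g\rangle$ on $S$ and its admissible growth in $\overline{S}$ follow exactly as in \thref{thm:intLp}, using also that $1/\Gamma(z)$ is entire. For the boundary bounds the pointwise inequality $|e^{-tL}f|\leqslant e^{-tL}|f|$ gives, for $z=a_j+i\tau$ with $j\in\{0,1\}$,
\begin{equation*}
|T_z f(x)|\leqslant \frac{\Gamma(a_j)}{|\Gamma(a_j+i\tau)|}\,R^{a_j,\varepsilon}|f|(x)\leqslant \frac{\Gamma(a_j)}{|\Gamma(a_j+i\tau)|}\,R_V^{a_j}|f|(x).
\end{equation*}
Stirling's formula yields $|\Gamma(a_j+i\tau)|^{-1}\lesssim (1+|\tau|)^{1/2-a_j}e^{\pi|\tau|/2}$, so the hypothesized $L^{p_j}$ boundedness of $R_V^{a_j}$ produces
\begin{equation*}
\|T_{a_j+i\tau}f\|_{p_j}\lesssim (1+|\tau|)^{1/2-a_j}e^{\pi|\tau|/2}\|f\|_{p_j},\qquad \tau\in\R,
\end{equation*}
uniformly in $\varepsilon>0$, for \emph{both} $j=0$ (where $p_0=1$) and $j=1$.

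Stein's complex interpolation theorem accepts this admissible growth and therefore yields a uniform-in-$\varepsilon$ bound $\|R^{a,\varepsilon}f\|_p=\|T_a f\|_p \lesssim \|f\|_p$ for every finitely simple $f$, whenever $1/p=\theta+(1-\theta)/p_1$ and $a=\theta a_0+(1-\theta)a_1$ with $\theta\in(0,1)$. Letting $\varepsilon\to 0^+$ and invoking monotone convergence exactly as in the final step of the proof of \thref{thm:intLp} then upgrades this to the $L^p$ boundedness of $R_V^a$. The main obstacle to overcome is precisely the failure of imaginary power bounds on $L^1$; placing $1/\Gamma(z)$ inside the analytic family bypasses it cleanly, at the price of the harmless polynomial factor $(1+|\tau|)^{1/2-a_j}$ coming from Stirling, which does not disturb Stein's growth hypothesis.
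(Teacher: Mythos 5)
Your argument is correct and follows essentially the same route the paper takes for this theorem: since $a_0>0$ one may write $T_z$ via the integral formula with the $1/\Gamma(z)$ factor (the paper's \eqref{eq:Tzdef'}), and the Gamma-function asymptotics on the vertical lines $\Rea z=a_j$ then replace the imaginary-power bounds \eqref{eq:Imeps} that were used only in the $p_0,p_1\in(1,\infty)$ setting of \thref{thm:intLp}. The only cosmetic difference is that you track the polynomial factor $(1+|\tau|)^{1/2-a_j}$ from Stirling, whereas the paper simply majorizes by $e^{\pi|\tau|}$; both are of admissible growth, so the outcome is the same.
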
 

\begin{proof}
	The proof is similar to that of \thref{thm:intLp}. For $\varepsilon>0$ we define the sets $F(\varepsilon)$ and the operators $R^{a,\varepsilon}$ as in that proof. Once again it suffices to justify \eqref{eq:Raeuni}. 
	
	Assume without loss of generality that $a_0 < a_1$, let  $S=\{z\in \C\colon a_0 < \Rea z< a_1\}$ and define the family of operators $\{T_z\}_{z\in \bar{S}}$ as in \eqref{eq:Tzdef}. Since this time $a_0>0$ the formula
	\begin{equation}
	\label{eq:Tzdef'}
	T_z f=(\ind{F(\varepsilon)}V^{z})\cdot\frac{1}{\Gamma(z)}\int_0^{\infty}e^{-tL}f\, t^{z-1}e^{-\varepsilon t}\,dt,\qquad f\in L^2,
	\end{equation}
	holds for $z\in \bar{S}.$ Moreover, $\{T_z\}_{z\in S}$ is a family of analytic operators of admissible growth; this can be justified as in the proof of \thref{thm:intLp}. Hence, in order to apply Stein's complex interpolation theorem it remains to bound $\|T_z\|_{p_j}$ for $z=a_j+i\tau,$ $j=0,1.$ Using \eqref{eq:Tzdef'} and the asymptotics for the Gamma function $|\Gamma(a_j+i\tau)|\approx |\tau|^{a_j-1/2}e^{-\pi |\tau|/2},$ see \cite[5.11.9]{nist}, we obtain the pointwise bound
	\[
	|T_z f(x)|\lesssim e^{\pi |\tau| } (\ind{F(\varepsilon)}V^{a_j})(x)\cdot \int_0^{\infty}e^{-tL}|f|(x)\, t^{a_j-1}e^{-\varepsilon t}\,dt\lesssim  e^{\pi |\tau| } R_V^{a_j}|f|(x),
	\]
	valid for $z=a_j+i\tau,$ $j=0,1.$ Hence, the $L^{1}$ boundedness of $R_V^{a_0}$ together with the $L^{p_1}$ boundedness of $R_V^{a_1}$ give
	\[
	\|T_z\|_{1}\lesssim  e^{\pi |\tau| },\qquad z=a_0+i\tau,\quad \tau\in\R,
	\]
	and
	\[
	\|T_z\|_{p_1}\lesssim  e^{\pi |\tau| },\qquad z=a_1+i\tau,\quad \tau\in\R.
	\]
	Thus, using Stein's complex interpolation theorem we complete the proof.

\end{proof}

%

Analogously to the $L^2$ case one particular Riesz transform $R_V^1$ is always bounded on $L^1,$ see \cite[Theorem 4.3]{AuBA} and \cite{GallMor, katoLp}. Interpolating this result with \thref{pro:L2} we obtain the following theorem.

\begin{theorem}
	\thlabel{thm:genLp}
	Let  $V\in L^1_{\rm loc}$ and take $p\in (1,2].$ Then for all $0\leqslant a \leqslant 1/p$ the Riesz transform $R_V^a$ is bounded on $L^p.$
\end{theorem}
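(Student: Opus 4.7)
The plan is to reduce the result to a chain of two interpolations building on the two endpoint results already recorded in the paper, namely Proposition \ref{pro:L2} ($R_V^{1/2}$ is bounded on $L^2$) and the Dziubański--Zienkiewicz result that $R_V^{1}$ is bounded on $L^1$. These are precisely the inputs that Theorem \ref{thm:intL1} was designed to accommodate.

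First I would apply Theorem \ref{thm:intL1} with the data $a_0=1$, $p_0=1$ (supplied by $R_V^1$ being $L^1$-bounded) and $a_1=1/2$, $p_1=2$ (supplied by Proposition \ref{pro:L2}). The parameter $\theta\in(0,1)$ then produces
\[
\frac{1}{p}=\theta+(1-\theta)\frac{1}{2}=\frac{1+\theta}{2},\qquad a=\theta\cdot 1+(1-\theta)\cdot \frac{1}{2}=\frac{1+\theta}{2},
\]
so that $a=1/p$ along the whole interpolation segment. As $\theta$ ranges over $(0,1)$ this gives $L^p$-boundedness of $R_V^{1/p}$ for every $p\in(1,2)$. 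The endpoint $p=2$ (which corresponds to $a=1/2$) is precisely Proposition \ref{pro:L2}, so in total $R_V^{1/p}$ is bounded on $L^p$ for all $p\in(1,2]$.

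To obtain the full range $0\leqslant a\leqslant 1/p$ I would then invoke Corollary \ref{cor:intLp}: the trivial bound on the identity operator $R_V^0$ on $L^p$ together with the just-established $L^p$-boundedness of $R_V^{1/p}$ interpolates to yield the $L^p$-boundedness of $R_V^a$ for every intermediate $a\in[0,1/p]$. This completes the argument for fixed $p\in(1,2]$, and since $p$ was arbitrary in this range the theorem follows.

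There is no genuine obstacle left at this point; the delicate work has already been done in proving Theorem \ref{thm:intL1} (which handles the subtle $p_0=1$ endpoint by absorbing the exponential growth $e^{\pi|\tau|/2}$ of imaginary powers into the $1/\Gamma(z)$ factor that appears naturally in the analytic family \eqref{eq:Tzdef'}) and in Proposition \ref{pro:L2}. If anything needs care, it is making sure the hypotheses of Theorem \ref{thm:intL1} are met, i.e.\ that both $a_0$ and $a_1$ are strictly positive — which they are in our choice — and that the line $a=\theta a_0+(1-\theta)a_1$ genuinely passes through $1/p$ for the target $p$. Both are immediate from the computation above.
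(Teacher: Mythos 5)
Your proposal is correct and follows essentially the same route as the paper: the same two endpoint inputs (Proposition \ref{pro:L2} for $R_V^{1/2}$ on $L^2$ and \cite[Lemma 3.10]{DzZi0} for $R_V^1$ on $L^1$), the same application of Theorem \ref{thm:intL1} to reach $a=1/p$, and the same use of Corollary \ref{cor:intLp} to fill in $0\leqslant a\leqslant 1/p$. (One small inaccuracy in your parenthetical: Theorem \ref{thm:intL1} does not ``absorb'' the growth of imaginary powers into $1/\Gamma(z)$ — rather, by using the integral representation \eqref{eq:Tzdef'} it avoids imaginary powers on $L^1$ altogether, and the only admissible growth comes from the $1/\Gamma(z)$ factor; this does not affect your argument.)
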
 
\begin{proof}
	The $L^2$ boundedness of $R_V^{1/2}$ is guaranteed by \thref{pro:L2}. The $L^1$ boundedness of $R_V^1$ is justified in \cite[Theorem 4.3]{AuBA}. Hence, \thref{thm:intL1} gives the $L^p$ boundedness of $R_V^a$ whenever $a=\theta+(1-\theta)/2=1/p.$ Finally, \thref{cor:intLp} extends the boundedness on $L^p$ to $0\leqslant a\leqslant 1/p.$ 
	
\end{proof}

\section{Definitions  and a counterexample on $L^{\infty}$ } 
Here the approach from the previous section is invalid since $e^{-tL}$ does not necessarily extend to a strongly continuous semigroup on $L^{\infty}.$ 
However, for certain classes of potentials the operator $	e^{-tL},$ $t>0,$ can be also expressed by the celebrated Feynman--Kac formula  
\begin{equation} \label{feynman-kac}
e^{-tL} f(x) = \E_x \left[ e^{-\int_0^t V(X_s) ds} f(X_t) \right],\qquad f\in L^p,
\end{equation}
where $1\leqslant p<\infty.$
The expectation $\E_x$ is taken with regards to the Wiener measure of the standard $d$-dimensional Brownian motion $\{X_s\}_{s>0},$ starting at $x\in \R^d;$  here $X_s=(X_s^1,\ldots,X_s^d).$  
Since the potential $V$ is a.e. non-negative, identity \eqref{feynman-kac} is true whenever $V\in L^2_{\rm loc}$ belongs to the local Kato class $K_d^{\rm loc}$. This follows for example from \cite[Remark 4.14]{sznitman} once we recall that for $V\in L^2_{\rm loc}$ the operator $-\Delta/2+V$ is essentially self-adjoint on $C_c^{\infty}$, hence its Friedrichs extension is its unique self-adjoint extension. We will not need the definition of the local Kato class in our paper; for our purpose it is important to note that $L^{q}_{\rm loc}\subseteq K_d^{\rm loc}$ whenever $q\geqslant 1$ satisfies $q>d/2,$   see \cite[Lemma 4.105]{LHB}. Therefore \eqref{feynman-kac} is true for $V\in L^{q}_{\rm loc}$ whenever $q>d/2$ and $q\geqslant 2$, in particular for $V\in L^{\infty}_{\rm loc}.$ The right-hand side of \eqref{feynman-kac} makes sense also for $f\in L^{\infty},$ see \cite[Section 4.2.4]{LHB}, which leads us to define for $t>0$
\begin{equation} \label{feynman-kac1}
e^{-tL} f(x) \coloneqq \E_x \left[ e^{-\int_0^t V(X_s) ds} f(X_t)  \right], \qquad f\in L^{\infty}.
\end{equation} 
To deal with measurability questions we need a technical lemma on the continuity of $e^{-tL} f.$
\begin{lemma}
	\thlabel{lem:contfk}
	Assume that $q>d/2$ and $q\geqslant 2$ and let  $V\in  L^{q}_{\rm loc}$  be an a.e. non-negative potential. Then for all $f\in L^{\infty}$ the function  $e^{-tL} f(x)$ given by \eqref{feynman-kac1} is jointly continuous in $(t,x)\in (0,\infty)\times \R^d.$ In particular $e^{-tL}(\ind{})(x)$ is jointly continuous in $t$ and $x$.
\end{lemma}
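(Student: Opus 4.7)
The plan is to reduce to the case $f \in C_c^\infty$ and then establish joint continuity via a pathwise analysis of the Feynman--Kac integrand on a localizing event. Fix $(t_0, x_0) \in (0,\infty) \times \R^d$ and let $(t_n, x_n) \to (t_0, x_0)$; passing to a tail we may assume $t_n \in [t_0/2, 2 t_0]$ and $|x_n - x_0| \leqslant 1$ for all $n$. Since $V \geqslant 0$, formula \eqref{feynman-kac1} yields the sub-Markov domination $|e^{-tL} h(x)| \leqslant (p_t * |h|)(x)$ with $p_t$ the Gaussian heat kernel. Splitting $f = f\ind{B(x_0, R)} + f\ind{B(x_0, R)^c}$, the far-field part is uniformly small in $n$ for $R$ large by a Gaussian tail estimate. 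The remaining compactly supported piece lies in $L^1$, and for any $f_k \in C_c^\infty$ with $f_k \to f$ in $L^1$ one has $|e^{-tL}(f - f_k)(x)| \leqslant \|p_t\|_\infty \|f - f_k\|_1$, uniformly small in $n$. It thus suffices to treat $f \in C_c^\infty$.

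For such $f$, realize Brownian motion on a common probability space $(\Omega, \mathcal{F}, \P)$ via $X_s = x + B_s$ with $B$ a standard Brownian motion starting at $0$. Given $\varepsilon > 0$, fix $T > 2 t_0$ and $M$ so large that $\P(\Omega_M^c) < \varepsilon$ for $\Omega_M \coloneqq \{\sup_{s \leqslant T} |B_s| \leqslant M\}$; set $\tilde V \coloneqq V\ind{K}$ with $K \coloneqq \overline{B(x_0, M+2)}$, so that $\tilde V \in L^q(\R^d)$. Writing $A^n \coloneqq \int_0^{t_n} V(x_n + B_s)\, ds$, the trivial bound on $\Omega_M^c$ (using $e^{-A^n} \leqslant 1$ and $|f| \leqslant \|f\|_\infty$) gives a contribution at most $2\|f\|_\infty \varepsilon$ to $\E|e^{-A^n} f(x_n + B_{t_n}) - e^{-A^0} f(x_0 + B_{t_0})|$. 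On $\Omega_M$, the path $x_n + B_s$ stays inside $K$ for $s \leqslant T$, so $A^n$ equals $\tilde A^n \coloneqq \int_0^{t_n} \tilde V(x_n + B_s)\, ds$. Combining $|e^{-a} - e^{-b}| \leqslant |a - b|$ for $a,b \geqslant 0$ with the a.s.\ convergence $f(x_n + B_{t_n}) \to f(x_0 + B_{t_0})$ (continuity of $f$ together with path continuity of Brownian motion), the problem reduces to showing $\E|\tilde A^n - \tilde A^0| \to 0$.

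By Fubini, $\E|\tilde A^n - \tilde A^0|$ is controlled by $\int_0^{t_0}(p_s * |\tau_{x_n} \tilde V - \tau_{x_0} \tilde V|)(0)\, ds$ plus a term of order $|t_n - t_0| \cdot \sup_{s \in [t_0/2, 2 t_0]} (p_s * \tilde V)(x_0)$, where $\tau_x$ denotes translation. By H\"older's inequality, $(p_s * g)(0) \leqslant \|p_s\|_{q'} \|g\|_q \simeq s^{-d/(2q)} \|g\|_q$; the factor $s^{-d/(2q)}$ is integrable near $s = 0$ \emph{precisely} because $q > d/2$, supplying the dominating function for dominated convergence in $s$. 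Translation continuity of $\tilde V$ in $L^q$ makes the first integrand vanish pointwise in $s$, and the second piece vanishes as $|t_n - t_0| \to 0$. The main obstacle is the tension between $f$ being only in $L^\infty$ (so pointwise passage to the limit in $f(x_n + B_{t_n})$ requires the prior reduction to $C_c^\infty$) and $V$ being only in $L^q_{\rm loc}$ (whose local singularities must be absorbed by Gaussian smoothing uniformly for small $s$, which is possible exactly when $q > d/2$).
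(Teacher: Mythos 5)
Your proof is correct, but it takes a genuinely different route from the paper's. The paper's proof is short and delegates the hard work to a cited result: since $L^q_{\rm loc}\subseteq K_d^{\rm loc}$, it invokes \cite[Proposition 3.5]{sznitman} to obtain that $e^{-tL}$ has a kernel $K_t(x,y)$ that is jointly continuous in $(t,x,y)$; joint continuity of $e^{-tL}f$ then follows from the Gaussian upper bound on the kernel, a uniform tail estimate, and dominated convergence on the region $|x-y|\leqslant N$. You instead work directly from the Feynman--Kac representation, never invoking kernel regularity: you first reduce to $f\in C_c^\infty$ via the sub-Markov domination and an $L^1$ approximation; then, coupling all starting points on one probability space, you localize the Brownian paths to a ball of radius $M$, replace $V$ by $\tilde V = V\ind{K}\in L^q$, and show the Feynman--Kac integrand converges by combining the pathwise limit $f(x_n+B_{t_n})\to f(x_0+B_{t_0})$ with $\E|\tilde A^n-\tilde A^0|\to 0$, the latter via Fubini, Young's inequality $(p_s*g)(0)\lesssim s^{-d/(2q)}\|g\|_q$, and $L^q$ translation continuity of $\tilde V$. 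What each buys: the paper's argument is brisker but relies on an external regularity theorem for Kato-class potentials; yours is self-contained, probabilistic in spirit, and makes explicit exactly where $q>d/2$ enters (integrability of $s^{-d/(2q)}$ near $0$ supplies the dominating function). One could streamline your write-up by noting that the hypothesis $q\geqslant 2$ plays no role in your argument; in the paper it is only used elsewhere to guarantee that the Feynman--Kac formula matches the $L^2$ self-adjoint extension of $L$.
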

\begin{proof}
	Since $L^{q}_{\rm loc}\subseteq K_d^{\rm loc}$ it follows from \cite[Proposition 3.5]{sznitman} that $e^{-tL}$ is an integral operator with its kernel $K_t(x,y)$ being a jointly continuous functions of $(t,x,y).$ 
	Since $V\geqslant 0$ we also have $K_t(x,y)\leqslant  (2\pi t)^{-d/2}\exp(|x-y|^2/(2t))$ and therefore for each $N>0$ it holds 
	\begin{equation}
	\label{eq:contf1}
	\int_{|x-y|> N}K_t(x,y)|f(y)|\,dy \leqslant \pi^{-d/2} \|f\|_{\infty}  \int_{|w|\geqslant N/(\sqrt{2t})} e^{-|w|^2}\,dw.
	\end{equation}
	Consider now $(t,x)\to (t_0,x_0)$ and let $\varepsilon >0$ be arbitrarily small. Splitting
	\begin{align*}
	e^{-tL}f(x)=\int_{|x-y|\leqslant N} K_t(x,y)f(y)\,dy + \int_{|x-y|> N}K_t(x,y)f(y)\,dy
	\end{align*} 
	and using \eqref{eq:contf1} we see that for $N=N(\varepsilon)$ large enough holds
	\begin{align*}
	\left|	e^{-tL}f(x)-\int_{|x-y|\leqslant N} K_t(x,y)f(y)\,dy\right|\leqslant \varepsilon,
	\end{align*}
	uniformly in $t_0/2<t<2t_0$ and  $\abs{x-x_0}<1$. Moreover, for such $(t,x)$ we see that $C\|f\|_{L^{\infty}}\ind{|y|\leqslant N+|x_0|+1}$ is an integrable majorant of $\ind{|x-y|\leqslant N}K_t(x,y)f(y).$ Thus, using Lebesgue's dominated convergence theorem we obtain
	\[
	\limsup_{(t,x)\to (t_0,x_0)}|e^{-tL}f(x)-e^{-t_0L}f(x_0)|\leqslant 2\varepsilon.
	\]
	Since $\varepsilon>0$ was arbitrary this completes the proof.
\end{proof}

Now, take $a>0$ and let $V\in L^{\infty}_{\rm loc}$ be an a.e. non-negative potential. For a non-negative function $f\in L^{\infty}$ we define the Riesz transform $R_V^a$ by
\begin{equation}
\label{eq:Ravinf}
R_V^a f(x)=V^a(x)\cdot\frac{1}{\Gamma(a)}\int_0^{\infty} \E_x \left[ e^{-\int_0^t V(X_s) ds}f(X_t)  \right]\,t^{a-1}\,dt,\qquad f\in L^{\infty}.
\end{equation}
Note that by \thref{lem:contfk} the function $R_V^af(x)$
is then a measurable function on $\R^d$ possibly being infinite for some $x.$ 
Moreover, by \eqref{feynman-kac} the $L^{\infty}$ definition \eqref{eq:Ravinf} coincides with the $L^p$ definition \eqref{eq:Rav} whenever $f$ is a finitely simple function.

Since the semigroup is positivity preserving we have
\begin{equation}
\label{eq:etL(1)comp}
|e^{-tL}f(x)|\leqslant e^{-tL}(\|f\|_{\infty}\ind{})(x)=\|f\|_{\infty}e^{-tL}(\ind{})(x),\qquad f\in L^{\infty},
\end{equation}
which leads to the following definition of the $L^{\infty}$ boundedness of $R_V^a.$

\begin{df}
	\label{def:Ravbinf}
	We say that the Riesz transform $R_V^a$ is bounded on $L^{\infty}$ if 
	\begin{equation}
	\label{eq:Ravbinf}
	\|R_V^a (\ind{})\|_{\infty}<\infty.
	\end{equation}    
\end{df}
\noindent Note that if \eqref{eq:Ravbinf} holds, then for every $f\in L^{\infty}$ by \eqref{eq:etL(1)comp} we have $|R_V^a (f)(x)|\leqslant \|f\|_{\infty}R_V^a(\ind{})(x)$ so that 
\begin{equation}
\label{eq:Ravbinfgen}
\|R_V^a (f)\|_{\infty} \leqslant C\|f\|_{\infty},\qquad f\in L^{\infty}
\end{equation}   
with $C=\|R_V^a (\ind{})\|_{\infty}.$

Since 
\begin{equation}
\label{eq:Rav1}
R_V^a(\ind{})(x)=V^a(x)\cdot\frac{1}{\Gamma(a)}\int_0^{\infty} e^{-tL}(\ind{})(x) \, t^{a-1}\,dt
\end{equation}
it is apparent that in order for $R_V^a$ to be finite a.e. on $\supp V$ the monotone function $t\mapsto e^{-tL}(\ind{})(x)$ must converge to $0$ as $t\to \infty.$ This however is not always the case.
\begin{pro}
	\thlabel{pro:cexlinf}
	Let $d\geqslant 3$ and let  $V$ be a non-negative potential on $\R^d$ which is compactly supported and not identically equal to zero. Assume that $V\in L^{q}(\R^d)$ with $q>d/2$ and $q \geqslant 2$. Then, for any $a>0$ we have $R_V^a(\ind{})(x)=\infty$ for $x$ such that $V(x) \neq 0$. In particular $R_V^a$ is unbounded on $L^{\infty}.$  
\end{pro}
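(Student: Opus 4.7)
The plan is to exploit the Feynman–Kac formula \eqref{feynman-kac1} together with the transience of $d$-dimensional Brownian motion (which requires $d\geqslant 3$). The key observation is that for a compactly supported $V$ in a transient dimension, the occupation integral $\int_0^\infty V(X_s)\,ds$ is almost surely finite, so that $e^{-\int_0^t V(X_s)\,ds}$ stays bounded away from zero in $t$-mean. This will force $e^{-tL}(\ind{})(x)$ to have a strictly positive limit as $t\to\infty$, making the $t$-integral in \eqref{eq:Rav1} divergent at infinity.

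First I would show that $\E_x\bigl[\int_0^\infty V(X_s)\,ds\bigr]<\infty$ for every $x\in\R^d$. Since $\{X_s\}$ has transition density bounded by that of $-\tfrac12\Delta$, this expectation equals $\int_{\R^d} G(x,y) V(y)\,dy$, where $G$ is a constant multiple of the Newtonian kernel $|x-y|^{2-d}$. Because $V$ is compactly supported, contributions far from $\supp V$ are harmless; on a fixed bounded neighbourhood $K$ of $\supp V$ one splits the integral and applies Hölder's inequality with exponents $q$ and $q'=q/(q-1)$. The assumption $q>d/2$ gives $q'<d/(d-2)$, which is exactly the range in which $|x-y|^{2-d}\in L^{q'}_{\mathrm{loc}}$, so $\int G(x,y)V(y)\,dy<\infty$ for every $x$. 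Consequently $Z(\omega)\coloneqq \int_0^\infty V(X_s(\omega))\,ds<\infty$ $\P_x$-a.s.

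Next, since $V\geqslant 0$, the integrand $t\mapsto e^{-\int_0^t V(X_s)\,ds}$ is decreasing in $t$ and converges pointwise ($\P_x$-a.s.) to $e^{-Z}>0$. By the monotone (or dominated) convergence theorem,
\begin{equation*}
\lim_{t\to\infty} e^{-tL}(\ind{})(x) \;=\; \lim_{t\to\infty}\E_x\bigl[e^{-\int_0^t V(X_s)\,ds}\bigr] \;=\; \E_x\bigl[e^{-Z}\bigr] \;\eqqcolon\; c(x) \;>\; 0.
\end{equation*}
Because $t\mapsto e^{-tL}(\ind{})(x)$ is non-increasing, this yields the uniform lower bound $e^{-tL}(\ind{})(x)\geqslant c(x)>0$ for all $t>0$. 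Plugging this into \eqref{eq:Rav1} gives
\begin{equation*}
R_V^a(\ind{})(x)\;\geqslant\; \frac{V^a(x)}{\Gamma(a)}\,c(x)\int_1^{\infty} t^{a-1}\,dt\;=\;\infty
\end{equation*}
for every $x$ with $V(x)>0$. Since the set $\{V>0\}$ has positive Lebesgue measure (as $V$ is not identically zero) and lies in $\supp V$, we conclude $R_V^a(\ind{})(x)=\infty$ on a set of positive measure in $\supp V$, hence in particular $\|R_V^a(\ind{})\|_{\infty}=\infty$.

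The main technical point is the Hölder-type estimate establishing $G\ast V(x)<\infty$ pointwise; everything else is an application of monotone convergence and the positivity of $e^{-Z}$. The hypothesis $q>d/2$ is used precisely here, to ensure integrability of the Newtonian kernel against $V$, and the hypothesis $d\geqslant 3$ is used both for transience of Brownian motion and for the Newtonian kernel representation of the occupation measure. Dimensions $d=1,2$ would be recurrent, and the argument would instead give $e^{-tL}(\ind{})(x)\to 0$, so a more refined analysis would be required there; this is consistent with the general trend that qualitative behaviour of $R_V^a$ depends delicately on the interplay between $V$ and the underlying diffusion.
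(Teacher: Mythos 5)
Your proof is correct, and it takes a genuinely different route from the paper's. The paper's argument is very short: it invokes \cite[Lemma 2.4]{DzZi1} (from the same authors' work on Schr\"odinger operators with compactly supported potentials) to get that $w(x)=\lim_{s\to\infty}e^{-sL}(\ind{})(x)$ satisfies $w(x)>\delta$ \emph{uniformly} in $x\in\R^d$ for some fixed $\delta>0$, and then uses the semigroup property $w=e^{-tL}w$ to propagate this lower bound to $e^{-tL}(\ind{})$ for all $t>0$, whence the $t$-integral diverges. You instead re-derive the needed lower bound from first principles: identifying $\E_x\bigl[\int_0^\infty V(X_s)\,ds\bigr]$ with the Newtonian potential $\int G(x,y)V(y)\,dy$, controlling it via H\"older ($q>d/2$ puts $|x-y|^{2-d}$ in $L^{q'}_{\rm loc}$ with $q'<d/(d-2)$), and then using monotone/dominated convergence in the Feynman--Kac formula to get $e^{-tL}(\ind{})(x)\geqslant c(x)=\E_x[e^{-Z}]>0$. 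Your lower bound is only pointwise rather than uniform in $x$, but that is enough for the conclusion, since one only needs divergence of the $t$-integral at each fixed $x$ with $V(x)>0$. The payoff of your approach is that it is self-contained and makes transparent exactly where the hypotheses $d\geqslant3$ (transience, Newtonian kernel) and $q>d/2$ (local $L^{q'}$-integrability of the kernel) enter; the payoff of the paper's is brevity and a stronger uniform statement. One small caveat: on the closed set $\supp V$ there may be points with $V(x)=0$, at which $R_V^a(\ind{})(x)=V^a(x)\cdot\infty$ is not literally $\infty$; your restriction to $\{V>0\}$ (which has positive measure and is dense in $\supp V$) is the honest formulation, and it still gives $\|R_V^a(\ind{})\|_\infty=\infty$.
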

\begin{proof}
	Fix $a>0$. For $x\in \R^d$ we let $w(x)=\lim_{s\to \infty}e^{-sL}(\ind{})(x).$ From \cite[Lemma 2.4]{DzZi1} there exist a constant $\delta>0$ such that $\delta <w(x)\leqslant 1$ uniformly in $x\in \R^d.$ Since by the semigroup property $w(x)=e^{-tL}(w)(x)$ for any $t>0,$ we see that $e^{-tL}(\ind{})\geqslant e^{-tL}(w)(x)\geqslant \delta$ uniformly in $x\in \R^d.$ Consequently, the integral $\int_0^{\infty}e^{-tL}(\ind{})(x)\,t^{a-1}\,dt$ is infinite for a.e.\ $x$ and so is $R_V^a(\ind{})(x)$ as long as $V(x) \neq 0$.
\end{proof}

The definition below is meant to guarantee the $x$-a.e. finiteness of $R_V^af(x).$
\begin{df}
	\label{ed}
	Let $V\in L^{\infty}_{\rm loc}$ be an a.e. non-negative potential and let $\delta>0.$ We say that the semigroup $e^{-tL}$ has an exponential decay of order $\delta$ ($\ed(\delta)$ for short) if there exists a constant $C>0$ such that
	\begin{equation}
	\tag{$\ed(\delta)$}
	\label{eq:ed}
	\|e^{-tL}(\ind{})\|_{\infty}\leqslant C e^{-\delta t},\qquad t>0.
	\end{equation}
\end{df}
The assumption $(ED(\delta))$ implies $\abs{R_V^a f(x)} \leqslant C\delta^{-a} V^a(x) \norm{f}_\infty$ $x$-a.e.. Note, however, that this may not be enough to conclude that $\norm{R_V^a (\ind{})}_\infty < \infty$.

\section{$L^\infty$ boundedness for classes of potentials} \label{sec:linf}
	
	Throughout this section we assume that $V \in L_{\rm loc}^\infty$. Here our goal is to estimate the $L^\infty$ norm of $R_V^a$ for classes of potentials $V$. As mentioned in Definition \ref{def:Ravbinf} this is the same as estimating $\norm{R_V^a (\ind{})}_{\infty}$ with $R_V^a(\ind{})$ defined by \eqref{eq:Rav1}.
	
	Before we dive into details, we prove a general result concerning the $L^{\infty}$ decay of the semigroup $e^{-tL}$ defined in \eqref{feynman-kac1}. We will use \thref{lem21} below to prove the $L^\infty$ and $L^1$ boundedness of $R_V^a$ for concrete examples of potentials $V$ in \thref{thm:exLinfint,thm:exL1int}. Here $\pi$ denotes a $(d-1)$-dimensional hyperplane in $\R^d$. For $N>0$ we let $P$ be the strip $$P =P_N:= \{x \in \R^d: \dist(x, \pi) \leqslant N\}$$ and set $\chi=\ind{P}.$
	
	\begin{lemma}
	\thlabel{lem21}
		Let $N>0$ and assume that the potential $V \in L_{\rm loc}^\infty$ is uniformly positive outside the strip $P_N.$ More precisely we assume that $V$ is non-negative a.e.\ and that there is $c > 0$ such that $V(x) \geqslant c$ for a.e.\ $x$ satisfying $\dist(x, \pi) > N$. Then the semigroup $e^{-tL}$ has $ED(\delta)$ with $\delta = \frac{1}{2}\min\left( c, \frac{1}{8N^2} \right).$ More precisely, there is a universal constant $C>0$ such that for $t>0$ and $x\in \R^d$ it holds
		\[
		e^{-tL}(\ind{})(x) \leqslant C\, e^{-\delta t}.
		\]
	\end{lemma}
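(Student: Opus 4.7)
The proof reduces the assertion to a one-dimensional semigroup estimate via the Feynman--Kac formula. After an isometry of $\R^d$ we may assume $\pi = \{x_1 = 0\}$, so that $x \in P$ iff $|x_1|\leq N$, and $Y_s := X_s^1$ is a standard one-dimensional Brownian motion starting from $y_0 := x_1$. Applying \eqref{feynman-kac1} to $f\equiv \ind{}$ together with the pointwise bound $V \geq c\,\ind{|x_1|>N}$ gives
\begin{equation*}
e^{-tL}(\ind{})(x) \leq u(t, y_0) := \E_{y_0}[e^{-c\tau_t}], \qquad \tau_t := \int_0^t \ind{|Y_s|>N}\, ds.
\end{equation*}

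To control $u$, put $\delta := \tfrac12\min(c,\, 1/(8N^2))$ and $\kappa := 2\sqrt{\delta}$; one checks that $\kappa N \leq 1/2$ in both regimes of the minimum, so $\cos(\kappa N)>0$. With $\mu := \kappa\tan(\kappa N)$, define the $C^1$ function
\begin{equation*}
\psi(y) :=
\begin{cases}
 1 + \dfrac{\cos(\kappa y)}{\cos(\kappa N)}, & |y|\leq N,\\[6pt]
 1 + e^{-\mu(|y|-N)}, & |y|>N.
\end{cases}
\end{equation*}
Then $1 \leq \psi \leq 1 + \sec(1/2) =: C_0$, and a direct computation confirms the supersolution inequality
\begin{equation*}
-\tfrac12\psi''(y) + c\,\ind{|y|>N}(y)\,\psi(y) \geq \delta\,\psi(y), \qquad y \in \R,
\end{equation*}
pointwise on $\R \setminus \{\pm N\}$ and hence, since $\psi'$ is continuous across $|y|=N$, also distributionally. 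For $|y|<N$ the inequality reduces to $\cos(\kappa y)\geq \cos(\kappa N)$ (using the exact slack $\kappa^2/2 - \delta = \delta$); for $|y|>N$ it reduces to $\mu^2 = 4\delta\tan^2(\kappa N) \leq 2\delta \leq 2(c-\delta)$, where the first inequality uses $\tan(1/2)\leq 1/\sqrt{2}$ and the second uses $\delta \leq c/2$.

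It\^o's formula applied to $s \mapsto e^{-c\tau_s}\psi(Y_s)$ together with the supersolution inequality gives
\begin{equation*}
\frac{d}{ds}\E_{y_0}[e^{-c\tau_s}\psi(Y_s)] = \E_{y_0}\bigl[e^{-c\tau_s}\bigl(\tfrac12\psi''(Y_s)-c\,\ind{|Y_s|>N}\psi(Y_s)\bigr)\bigr] \leq -\delta\,\E_{y_0}[e^{-c\tau_s}\psi(Y_s)],
\end{equation*}
so Gr\"onwall's inequality yields $\E_{y_0}[e^{-c\tau_t}\psi(Y_t)]\leq \psi(y_0)e^{-\delta t}$. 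Since $\psi \geq 1$ everywhere, this gives $u(t, y_0) \leq \psi(y_0)\,e^{-\delta t} \leq C_0\,e^{-\delta t}$, which combined with the reduction is the desired bound.

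The main difficulty is the construction of $\psi$. The natural candidate, the positive ground-state eigenfunction of $-\tfrac12\partial_y^2 + c\,\ind{|y|>N}$, vanishes at infinity, so the ratio $\sup\psi/\inf\psi$ is infinite and the comparison against $\ind{}$ breaks down. Adding a positive constant to the eigenfunction forces the inner oscillation frequency $\kappa$ to be strictly larger than $\sqrt{2\delta}$; the slack $\kappa^2/2 - \delta$ must then absorb the extra constant while keeping the derivative-matching condition $\mu=\kappa\tan(\kappa N)\leq\sqrt{2(c-\delta)}$ at $|y|=N$ compatible with $\delta$. The specific rate $\delta=\tfrac12\min(c,1/(8N^2))$ arises (up to constants) as the largest value for which the slack identity $\kappa^2/2-\delta = \delta$ and the control $\kappa N\leq 1/2$ (needed to bound $\tan(\kappa N)$) hold simultaneously.
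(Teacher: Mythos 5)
Your argument is correct and, in fact, gives the same decay rate $\delta = \frac12\min(c,1/(8N^2))$ as the paper, but by a genuinely different method. The paper applies Cauchy--Schwarz to write $e^{-tL}(\ind{})(x)\leqslant e^{-kt}\big(\E_x e^{2k\int_0^t\chi(X_s)ds}\big)^{1/2}$, then controls the exponential moment of the occupation time of the strip via an auxiliary lemma (\thref{lem22}) whose proof rests on the explicit density of Brownian local time due to Tak\'acs and Doney--Yor; the quadratic-in-$k$ behaviour $e^{8N^2k^2t}$ is then beaten by $e^{-kt}$ for $k$ small. You instead bound $e^{-tL}(\ind{})(x)$ by the one-dimensional quantity $\E_{y_0}[e^{-c\tau_t}]$ directly and construct a bounded $C^1$, piecewise-$C^2$ supersolution $\psi$ of $-\frac12\psi'' + c\,\ind{|y|>N}\psi \geqslant \delta\psi$ with $1\leqslant\psi\leqslant C_0$; It\^o's formula (valid here because $\psi'$ is Lipschitz and $\psi''$ exists and is bounded off the two points $\pm N$, so there is no local-time correction term) plus Gr\"onwall gives $\E_{y_0}[e^{-c\tau_t}\psi(Y_t)]\leqslant\psi(y_0)e^{-\delta t}$, and the two-sided bound on $\psi$ finishes. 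Your approach avoids any appeal to the Tak\'acs/Doney--Yor local-time density formula and makes the origin of the rate transparent---it is the principal-eigenvalue-type quantity for the truncated square well, with the barrier $\kappa N\leqslant 1/2$ ensuring the interior cosine stays uniformly positive. The paper's approach is shorter given \thref{lem22} as a black box, and its local-time machinery is also what makes \thref{lem22} available again in \thref{lem28}. Your verifications of the slack identity $\kappa^2/2-\delta=\delta$, the matching condition $\mu=\kappa\tan(\kappa N)$, the inequality $\mu^2\leqslant 2\delta\leqslant 2(c-\delta)$, and the numeric bound $\tan(1/2)\leqslant 1/\sqrt2$ are all correct.
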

	
	To prove the above lemma we will need an auxiliary fact. \thref{lem22} below can be deduced from \cite[Lemma 4.105]{LHB}. For the sake of completeness we give a more direct proof below.  

	\begin{lemma}
	\thlabel{lem22}
	For all $k>0,$ $t>0,$ and $x\in \R^d$ we have
	\begin{equation}
		\label{eq:lem22}
			\E_x \left[ e^{2\int_0^t k\chi(X_s) ds} \right] \leqslant C\, e^{8N^2k^2 t},
	\end{equation}
		where $C>0$ is a universal constant.
	\end{lemma}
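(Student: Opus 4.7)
The plan is to reduce the $d$-dimensional problem to one dimension and then expand the exponential as a power series whose moments are controlled by explicit Gaussian estimates. First I would choose coordinates so that $\pi = \{x\in\R^d : x_1 = 0\}$; then $P = \{x : |x_1|\leq N\}$ and $\chi(X_s) = \ind{|X_s^1|\leq N}$ depends only on $B_s := X_s^1$, which is itself a standard one-dimensional Brownian motion. Thus the task becomes bounding $\E_y[\exp(2k\int_0^t \ind{|B_s|\leq N}\,ds)]$ uniformly in the starting point $y\in\R$.

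To estimate the $n$-th moment I would apply Fubini and the Markov property, writing
\[
\E_y\Big[\Big(\int_0^t \ind{|B_s|\leq N}\,ds\Big)^n\Big] = n!\int_{0<s_1<\cdots<s_n<t}\E_y\Big[\prod_{i=1}^n \ind{|B_{s_i}|\leq N}\Big]\,ds_1\cdots ds_n,
\]
and iterating the elementary bound $\sup_w \int_{|z|\leq N}(2\pi s)^{-1/2}e^{-(z-w)^2/(2s)}\,dz \leq 2N/\sqrt{2\pi s}$ to get $\E_y[\prod_i \ind{|B_{s_i}|\leq N}] \leq \prod_{i=1}^n 2N/\sqrt{2\pi(s_i - s_{i-1})}$ with $s_0 := 0$. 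The substitution $\sigma_i = s_i - s_{i-1}$ then turns the remaining simplex integral into a Dirichlet integral evaluating to $\pi^{n/2}t^{n/2}/\Gamma(n/2+1)$, and collecting constants yields the $n$-th moment bound
\[
\E_y\Big[\Big(\int_0^t \ind{|B_s|\leq N}\,ds\Big)^n\Big] \leq \frac{n!\,(N\sqrt{2t})^n}{\Gamma(n/2+1)}.
\]

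Plugging this into the power series with weights $(2k)^n/n!$ and setting $a = 2kN\sqrt{2t}$, so that $a^2 = 8N^2k^2 t$, reduces the lemma to the analytic inequality $\sum_{n\geq 0}a^n/\Gamma(n/2+1) \leq C\,e^{a^2}$ for a universal constant $C$. The even-indexed subsum is exactly $e^{a^2}$. For the odd-indexed subsum $\sum_{m\geq 0}a^{2m+1}/\Gamma(m+3/2)$ I would use the Legendre duplication formula $\Gamma(m+1)\Gamma(m+3/2) = \sqrt{\pi}\,2^{-2m-1}(2m+1)!$ to identify it as a Gaussian integral; it equals $e^{a^2}\mathrm{erf}(a)$ and is therefore bounded by $e^{a^2}$. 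Hence $C = 2$ works. The main technical point will be this final analytic identification; the preceding reductions to one dimension and the moment computation are essentially routine once one recognises that the strip indicator only constrains the first coordinate of $X_s$.
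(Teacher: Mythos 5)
Your proof is correct, and it follows a genuinely different route from the paper's. The paper reduces to one dimension the same way, but then invokes the local time $L_t(y)$ of Brownian motion, applies Jensen's inequality on the strip interval to pull the exponential inside, and uses the explicit density of $L_t(y)$ due to Tak\'acs (as recorded by Doney and Yor) to evaluate the resulting Gaussian integrals. Your argument avoids local time entirely: you expand the exponential, bound the $n$-th moment of the occupation time by iterating the one-step Markov estimate $\sup_w \P(|B_{s}-w|\le N \mid B_0 = w) \le 2N/\sqrt{2\pi s}$ over an ordered simplex, evaluate the simplex integral as a Dirichlet integral $\pi^{n/2} t^{n/2}/\Gamma(n/2+1)$, and then recognise $\sum_{n\ge 0} a^n/\Gamma(n/2+1)$ as $e^{a^2}(1+\mathrm{erf}(a)) \le 2e^{a^2}$ via the Legendre duplication formula (or, equivalently, by checking that both sides of the odd-index identity solve $g'(a) = 2ag(a) + 2/\sqrt{\pi}$ with $g(0)=0$). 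Your approach is more elementary and self-contained, trading the external density formula for a moment computation that is routine once one believes the Dirichlet integral; the paper's route is shorter on the page but outsources the hard part to the literature on local time distributions. Both yield the same exponent $8N^2k^2t$, which is the quantity actually used in \thref{lem21}. One small point worth making explicit if you write this up: the term-by-term exchange of expectation and sum is justified by Tonelli since every term is nonnegative for $k>0$ (and the occupation time is bounded by $t$, so the series converges absolutely in any case).
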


	\begin{proof}

		We prove this fact in the case $\pi = \{0\} \times \R^{d-1}$ and $P = [-N, N] \times \R^{d-1}$. The general result follows from the invariance of Brownian motion under orthogonal transformations (see \cite[p. 5]{port_stone}) and the fact that the bound is independent of $x$. Since in this case $\chi(X_s) = \ind{[-N,N]}(X_s^1)$ it suffices to prove the lemma in the dimension $d=1.$ In particular in the proof we take $x\in \R.$
		
		The main tool of our proof is the local time of Brownian motion defined for $y\in \R$ in the one-dimensional case as
		\[
		L_t(y) = \lim_{\varepsilon \to 0^+} \frac{1}{2\varepsilon} \int_0^t \ind{[y-\varepsilon, \, y+\varepsilon]} (Y_s) \, ds,
		\]
		where $\{Y_s\}_{s>0}$ is the standard one-dimensional Brownian motion starting at $0.$
		It is well known that
		\[
		\int_0^t f(Y_s) \, ds = \int_\R f(y) L_t(y)\, dy
		\]
		for any locally integrable function $f$, see \cite[(5.4)]{borodin}. In particular, we have
		\begin{equation} \label{eq:loctime}
			\int_0^t \ind{[-N-x, \, N-x]}(Y_s) \, ds = \int_{-N-x}^{N-x} L_t(y)\, dy.
		\end{equation}
The law of $L_t(y)$ was computed by  Tak\'{a}cs \cite{takacs}. From a paper of Doney and Yor \cite{doney_yor}, see the last identity in Section 3 on p.\ 277 (with $\mu=0$ and $x=y$) and  \cite[eq.\ (1.4)]{doney_yor}, it follows that the distribution of $L_t(y)$ is given by
	\[
		c_{y,t} \delta_0 + f_{y,t}(z) \, dz
	\]
	on $[0, +\infty)$, where $\delta_0$ denotes the Dirac measure at $0,$
	\begin{equation}
		\label{eq:denfxt}
		f_{y,t}(z) = \frac{\sqrt{2}}{\sqrt{\pi t}} e^{-\frac{(|y|+z)^2}{2t}}, \qquad y \in \RR,\quad  z > 0,
	\end{equation}
	and $c_{y,t}<1$ is a normalizing constant which value is irrelevant for us.
		 
Using \eqref{eq:loctime} and Jensen's inequality for $x\in \RR$ we obtain
	\begin{align*}
		\E_{x} \left[ e^{2\int_0^t k\chi(X_s) ds} \right] &= \E_0 \left[ e^{2\int_{-N-x}^{N-x} kL_t(y) dy} \right] \leqslant \frac{1}{2N} \E_0 \left[ \int_{-N-x}^{N-x} e^{4NkL_t(y)}\, dy \right] \\
		&\leqslant \frac{1}{2N} \int_{-N-x}^{N-x} \left( 1 + \int_0^\infty e^{4Nkz} f_{y,t}(z) \, dz \right) \, dy \\
		&= 1 + \frac{1}{2N} \int_0^\infty e^{4Nkz} \int_{-N-x}^{N-x} f_{y,t}(z) \, dy \, dz
	\end{align*}
	The $1+$ term in the second line comes from the atom of the distribution of $L_t(y)$ at $z=0$. Since the function $y \mapsto f_{y,t}(z)$ is radially decreasing, we can change the limits of the inner integral to $[-N, N]$, possibly increasing its value. Thus, using \eqref{eq:denfxt} gives
	\begin{equation} \label{eq3}
		\begin{aligned}
			&1 + \frac{1}{2N} \int_0^\infty e^{4Nkz} \int_{-N-x}^{N-x} f_{y,t}(z) \, dy \, dz \leqslant 1 + \frac{1}{2N} \int_0^\infty e^{4Nkz} \int_{-N}^{N} f_{y,t}(z) \, dy \, dz \\
			&= 1 + \frac{\sqrt{2}}{N\sqrt{\pi t}} \int_0^\infty e^{4Nkz} \int_0^N e^{-\frac{(y+z)^2}{2t}} \, dy \, dz.
		\end{aligned}
	\end{equation}
	First we deal with the inner integral. Calculating it yields
	\[
		\int_0^N e^{-\frac{(y+z)^2}{2t}}\, dy = \sqrt{\frac{\pi t}{2}} \left( \erf\left( \frac{z+N}{\sqrt{2t}} \right) -\erf\left( \frac{z}{\sqrt{2t}} \right) \right).
	\]
	To estimate the expression above, note that $\erf'(y) = \frac{2e^{-y^2}}{\sqrt{\pi}}$, hence, by the mean value theorem 
	\[
		\erf\left( \frac{z+N}{\sqrt{2t}} \right) -\erf\left( \frac{z}{\sqrt{2t}} \right) = \frac{N}{\sqrt{2t}} \erf'(\theta),
	\]
	for some $\theta>z/(\sqrt{2t})$ and thus 
	\[
		\int_0^N e^{-\frac{(y+z)^2}{2t}} dy \lesssim  N e^{-\frac{z^2}{2t}}.
	\]
	Plugging the above estimate into \eqref{eq3}, we obtain
	\begin{align*}
		\E_{x} \left[ e^{2\int_0^t k\chi(X_s) ds} \right] &\lesssim 1+ \sqrt{\frac{2}{\pi t}} \int_0^\infty e^{4Nkz-\frac{z^2}{2t}} dz\\
	&  \lesssim  e^{8N^2k^2 t},
	\end{align*}
	which completes the proof of \thref{lem22}.
	\end{proof}
	
	Now we prove  \thref{lem21}. In the proof the quadratic dependence on $k$ on the right-hand side of \eqref{eq:lem22} will be crucial.

	\begin{proof}[Proof of \thref{lem21}]
		We want to make use of the assumption that the potential $V$ is uniformly positive outside the set $P$ together with the previous lemma. We achieve this by an appropriate application of the Cauchy--Schwarz inequality. 
		
		Recall that $\chi = \ind{P}$ and take $k\in(0,c].$ Since the potential $2(V + k\chi)$ is bounded below by $2k$ using Cauchy--Schwarz inequality we estimate
		\begin{align} \label{eq2}
			&e^{-tL}(\ind{})(x) = \E_x \left[ e^{-\int_0^t V(X_s) ds} \right] = \E_x \left[ e^{-\int_0^t V(X_s) + k\chi(X_s) ds} \, e^{\int_0^t k\chi(X_s) ds} \right]  \nonumber \\
			&\leqslant\left[\E_x  e^{-2\int_0^t V(X_s) + k\chi(X_s) ds} \right]^{1/2} \left[\E_x  e^{2\int_0^t k\chi(X_s) ds} \right]^{1/2} \nonumber \\
			&\leqslant e^{-kt } \E_x \left[ e^{2\int_0^t k\chi(X_s) ds} \right]^{1/2}.
		\end{align}
	 Applying \thref{lem22} for each $k$ satisfying $4N^2k^2 \leqslant \frac{k}{2}$ we get
		\[
		e^{-tL}(\ind{})(x) \lesssim e^{-kt + 4N^2k^2t} \leqslant e^{-\frac{kt}{2}},\qquad x\in\R^d.
		\]
		In particular, the above estimate holds for $k=\min(c,(8N^2)^{-1})$ and the proof is completed. 
	\end{proof}

	Now focus on our goal, which is estimating the quantity
	\begin{equation} \label{eq:Rav12}
		\Gamma(a)\,R_V^a(\ind{})(x)= V^a(x)\,\int_0^{\infty} e^{-tL}(\ind{})(x) \, t^{a-1}\,dt
	\end{equation}
	independently of $x\in \RR^d$. We will do this by splitting the integral in \eqref{eq:Rav12} into two parts and estimating them separately.
	
	Before stating the result we need to introduce a quantity $\rho$ which plays a crucial role in our assumptions. For $u \geqslant 1$ and $x\in \R^d$ we define
	\begin{equation} \label{eq:def_rho}
		\rho = \rho_x(u) = \sup \left\{r \geqslant 0: \tfrac{V(x)}{u} \leqslant V(y) \ \text{ for a.e. } y\in B(x,r) \right\};
	\end{equation}
	recall that $B(x,r)$ denotes the closed Euclidean ball of radius $r$ in $\R^d$.
	Consequently, $\rho_x(u)$ is the radius of the largest closed ball around $x$ in which the potential $V$ is at least $V(x)/u$ a.e. We note that $\rho_x(u)$ is a non-decreasing function of $u$ with values in $[0,\infty].$ We also set
	\begin{equation} \label{eq:def_rk}
		r_k = r_k(x) = \rho_x(2^{k}) \quad \text{for } k = 0, 1, \dots.
	\end{equation}
	Our main assumption will be phrased in terms of 
	\begin{equation}
		\label{eq:def:IaV}
		I^a(V)(x) \coloneqq \int_1^{\max(1, V(x))} s^{a-1} e^{-\frac{\rho_x^2(s)}{4d}} ds \qquad \text{for a.e. } x\in \R^d.
	\end{equation}
	If $\rho_x(s) = \infty$, then we define $e^{-\frac{\rho_x^2(s)}{4d}} = 0$.
	
	First we estimate the integral in \eqref{eq:Rav12} from 0 to 1. Recall that implicit constants in $\lesssim$ and $\approx$ do not depend on $x\in \R^d$ but may depend on $a>0.$
	
	\begin{lemma}
	\thlabel{lem:linf01}
		Let $V$ be an a.e. non-negative potential and let $a>0.$ Then we have
		\[
		V(x)^{a} \int_0^1 e^{-tL}(\ind{})(x) \, t^{a-1} dt \lesssim I^a(V)(x) + 1 \qquad \text{for a.e. } x\in\R^d.
		\]
	\end{lemma}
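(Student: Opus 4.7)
The plan is to represent $e^{-tL}(\ind{})(x)$ via the Feynman--Kac formula
$e^{-tL}(\ind{})(x) = \E_x[\exp(-\int_0^t V(X_s)\,ds)]$, which is available since $V \in L^\infty_{\rm loc} \subseteq K_d^{\rm loc}$. For any $u \geq 2$, one decomposes this expectation according to whether the Brownian motion leaves the ball $B(x, \rho_x(u))$ by time $t$: on $\{\tau_{\rho_x(u)} > t\}$ one has $V(X_s) \geq 2V(x)/u$ and hence $\int_0^t V(X_s)\,ds \geq 2V(x)t/u$, while on $\{\tau_{\rho_x(u)} \leq t\}$ one uses the trivial bound $e^{-\int V}\leqslant 1$ together with the Gaussian tail $\P_x(\tau_{\rho_x(u)} \leqslant t) \lesssim \exp(-\rho_x(u)^2/(2dt))$ (proved essentially as in the proof of \thref{lem21}). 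This yields the pointwise estimate
\[
e^{-tL}(\ind{})(x) \;\lesssim\; e^{-2V(x)t/u} + \exp\bigl(-\rho_x(u)^2/(2dt)\bigr), \qquad u \geq 2.
\]

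Next I would make the substitution $s = V(x)t$, which transforms the left-hand side of the lemma into $\int_0^{V(x)} e^{-(s/V(x))L}(\ind{})(x)\,s^{a-1}\,ds$, matching the form of $I^a(V)(x)$. The trivial bound $e^{-tL}(\ind{}) \leqslant 1$ already handles $V(x) \leqslant 2$ (giving $V(x)^a/a \lesssim 1$) and the portion $s \in (0,2)$ of the integral (which contributes at most $2^a/a$, absorbed in the ``$+1$''). For $s \in [2, V(x)]$ the key observation is that $V(x)/s \geqslant 1$, which converts the Gaussian exponent $\rho_x(u)^2/(2dt) = \rho_x(u)^2V(x)/(2ds)$ into something at least $\rho_x(u)^2/(4d)$ — exactly the exponent appearing in $I^a(V)(x)$. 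Choosing $u = u(s)$ according to the dyadic scales $u_k = 2^{k+1}$, $r_k = \rho_x(u_k)$ from the setup of the section, and using a dyadic decomposition in $s$, the Gaussian contribution integrates to $\lesssim I^a(V)(x)$.

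The principal obstacle is the first term $e^{-2V(x)t/u}$, which for a single fixed $u$ does not yield sufficient decay to offset $V(x)^a t^{a-1}$ once integrated. Taking $u = s$ makes the Gaussian term match $\rho_x(s)^2/(4d)$ perfectly but leaves behind a constant factor $e^{-2}$ in the first term, whose integral against $s^{a-1}$ scales like $V(x)^a$; conversely taking $u=2$ gives exponential decay $e^{-s}$ in the first term but loses control of the second term when $\rho_x(2)=0$. The technical heart of the proof will therefore be a judicious matching of $u$ to $s$ across dyadic scales --- essentially choosing the smallest $k$ with $2^k \gtrsim V(x)t$ --- so that in each scale one of the two terms is already dominated by a summable dyadic geometric series while the other is absorbed into the corresponding block of $I^a(V)(x)$, with no residual factor of $V(x)^a$ appearing.
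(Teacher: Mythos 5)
Your overall plan (Feynman--Kac, exit-time decomposition, Gaussian tail, substitution $s=V(x)t$) is the right circle of ideas, and the trivial handling of $V(x)\leqslant 2$ and of the region $s\in(0,2)$ is fine. However, there is a genuine gap at exactly the point you flag as ``the principal obstacle,'' and the resolution you propose does not resolve it.

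You correctly observe that a single-radius split gives
\[
e^{-tL}(\ind{})(x)\;\lesssim\; e^{-2V(x)t/u}+e^{-\rho_x(u)^2/(2dt)}
\]
for each $u$. But this is a \emph{two-piece} decomposition of the sample space (stayed in $B(x,\rho_x(u))$ vs.\ exited), and on each piece you exploit only \emph{one} of the two mechanisms. The paper instead partitions the sample space into the events $\Omega_k\cap\Omega_{k-1}^c$, $k=1,\ldots,K$ (together with $\Omega_0$ and $\Omega_K^c$). On the piece $\Omega_k\cap\Omega_{k-1}^c$ both mechanisms fire simultaneously: the path has remained in $A_k$ (so the Feynman--Kac integrand is at most $e^{-tV(x)/2^k}$) and it has exited $A_{k-1}$ (so the probability is at most $e^{-r_{k-1}^2/(2td)}$). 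That gives a \emph{product} $e^{-tV(x)/2^k}\,e^{-r_{k-1}^2/(2td)}$ on each piece rather than a sum, and it is precisely this product structure that kills the spurious $V(x)^a$: after using $t<1$ to bound $e^{-r_{k-1}^2/(2td)}\leqslant e^{-r_{k-1}^2/(2d)}$ and then integrating $V(x)^a\int_0^1 e^{-tV(x)/2^k}t^{a-1}\,dt\lesssim 2^{ka}$, each $k$-term contributes $2^{ka}e^{-r_{k-1}^2/(2d)}$, and summing reproduces $I^a(V)(x)$.

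Your proposed fix---``choosing the smallest $k$ with $2^k\gtrsim V(x)t$''---sets $u\approx 2V(x)t$, which is exactly the scale you already identified as pathological: it makes the first term $e^{-2V(x)t/u}\approx e^{-1}$, a constant independent of $t$, so its contribution is $V(x)^a\int_0^1 e^{-1}t^{a-1}\,dt\simeq V(x)^a$, and nothing in the second (Gaussian) term cancels it because it is \emph{added}, not multiplied. No choice of $u=u(t)$ at scale $u\simeq V(x)t$ can give a summable geometric series in the first term, because the exponent $2V(x)t/u$ stays bounded. Choices with $u\ll V(x)t$ do give decay in the first term but then $\rho_x(u)$ may be much smaller than $\rho_x(V(x)t)$, and the Gaussian term is no longer dominated by the integrand of $I^a(V)(x)$ in general (your own remark about $\rho_x(2)$ possibly vanishing is the extreme instance of this). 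Optimizing $u$ over all scales (taking the pointwise infimum) may or may not be salvageable for arbitrary non-decreasing $\rho_x$, but it would require a genuinely new argument that you have not supplied, and in any case it is not what the paper does: the paper's telescoping sum over $\Omega_k\cap\Omega_{k-1}^c$ is the missing idea, and it sidesteps the optimization altogether because each term already carries the full product of both decays.

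One further small inaccuracy worth noting: the Gaussian tail should go through the reflection principle for each coordinate separately (as in \eqref{eq:erfc}), which produces the exponent $r^2/(2td)$ rather than $r^2/(2dt)$ in a dimension-dependent way; this is cosmetic but worth getting right if you write out the details.
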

	
	\begin{proof}
		First if $V(x) \leqslant 2$, then
		\[
		V(x)^{a} \int_0^1 e^{-tL}(\ind{})(x) \, t^{a-1} dt \lesssim 1.
		\]
		
		From now on we focus on the other case $V(x)>2.$ Define $K=K(x) = \lfloor \log_2 V(x) \rfloor$. For fixed $x\in \RR^d$ and $k=0,1,2,\ldots$  we introduce the sets
		\begin{equation} \label{eq:def_Ak}
			A_k = \left\{ y \in \R^d: \frac{V(x)}{2^k} \leqslant V(y) \right\}
		\end{equation}
		and
		\begin{equation} \label{eq:def_Omega}
			\Omega_k = \left\{ \omega \in \Omega: X_s(\omega) \in A_k  \text{ for almost all } s \in[0, t] \right\},
		\end{equation}
		where $(\Omega, \mathcal{F}, \P)$ is the underlying probability space for the $d$-dimensional Brownian motion $\{X_s\}_{s > 0}$ started at 0.

		Note that both the families $\{A_k\}$ and $\{\Omega_k\}$ are increasing in $k$. Using the Feynman--Kac formula \eqref{feynman-kac1} we write
		\begin{align} \label{lem:linf01:eq1}
			e^{-tL}&(\ind{})(x) = \nonumber \\
			&\E_x \left[ e^{-\int_0^t V(X_s) ds} \, \ind{\Omega_0}  \right] + \sum_{k=1}^K \E_x \left[ e^{-\int_0^t V(X_s) ds} \, \ind{\Omega_k \cap \Omega_{k-1}^c}  \right] + \E_x \left[ e^{-\int_0^t V(X_s) ds} \, \ind{\Omega_{K}^c}  \right] \nonumber \\
			&\leqslant e^{-tV(x)}+\sum_{k=1}^K e^{-\frac{tV(x)}{2^k}} \P \left( \Omega_k\cap \Omega_{k-1}^c \right) + \P \left( \Omega_K^c \right).
		\end{align}
		We need to estimate the probabilities in the above formula. This will be achieved with the aid of
		\begin{equation} \label{eq:Pomegac}
		\P \left( \Omega_k^c \right) \leqslant \P \left( \sup_{0 \leqslant s \leqslant t} \abs{X_s-x} \geqslant r_k \right).
		\end{equation}
	
		Before moving further we focus on justifying \eqref{eq:Pomegac}.
		To prove this inequality we will show that
		\begin{equation*} 
			\left\{ \omega \in \Omega: \sup_{0 \leqslant s \leqslant t} \abs{X_s(\omega) - x} < r_k \right\} \subseteq \Omega_k
		\end{equation*}
		up to a set of $\mathbb{P}$ measure $0$. More precisely, we will demonstrate that for $\mathbb{P}$ almost all $\omega \in \Omega$ we have the implication 
		\begin{equation} \label{eq:Xsxrimp}
		\textrm{if }	\sup_{0 \leqslant s \leqslant t} \abs{X_s(\omega) - x} < r_k\quad 
	\textrm{then also }
			X_s(\omega) \in A_k  \text{ for almost all } s \in[0, t].
		\end{equation}
		
	To this end take $\omega \in \Omega$ such that $\sup_{0 \leqslant s \leqslant t} \abs{X_s(\omega) - x} < r_k$. Using the definitions \eqref{eq:def_rho} and \eqref{eq:def_rk} of $\rho$ and $r_k$ we see that there is a set $N \subseteq \R^d$ of measure 0 such that
		\[
		\text{if } X_s(\omega) \notin N \text{ then } 	\frac{V(x)}{2^k} \leqslant V(X_s(\omega)),
		\]
		By the definition \eqref{eq:def_Ak} of $A_k$ this statement is the same as the implication
		\[
		\text{if } X_s(\omega) \notin N\text{ then }	X_s(\omega) \in A_k.
		\]
		Define $f_{\omega}(s):=X_s(\omega),$ $s\in [0,t]$, and let  $\tilde{N}(\omega) = f_{\omega}^{-1}[N] \subseteq [0, t].$ Then $s \notin \tilde{N}(\omega)$ if and only if $X_s(\omega) \notin N$. We shall now demonstrate that $\abs{\tilde{N}(\omega)} = 0$ for $\mathbb{P}$ almost all $\omega \in \Omega.$ Observe that
		\[
			\abs{\tilde{N}(\omega)} = \abs{\left\{ s \in [0, t]: X_s(\omega) \in N \right\}} = \int_0^t \ind{\{X_s(\omega) \in N\}}(s, \omega) \, ds.
		\]
		Calculating the expected value of the above expression and using Fubini's theorem give
		\begin{align*}
			\E\left[ |\tilde{N}| \right] &= \E \left[ \int_0^t \ind{\{X_s(\omega) \in N\}}(s, \omega) \, ds \right] = \int_0^t \E \left[ \ind{\{X_s(\omega) \in N\}}(s, \omega) \right] \, ds \\
			&= \int_0^t \P \left( X_s(\omega) \in N \right) \, ds = 0.
		\end{align*}
		The last equality follows from the fact that $\abs{N} = 0$ and that each of the variables $X_s$ has a continuous distribution. Since $\abs{\tilde{N}(\omega)}$ is non-negative, it has to be 0 for $\mathbb{P}$ almost all $\omega \in \Omega$. 
		
		Hence we have proved that for $\mathbb{P}$ almost all $\omega \in \Omega$ there is a set $\tilde{N}(\omega) \subseteq [0, t]$ of Lebesgue measure 0 and such that
		\[
		\textrm{if } s \notin \tilde{N}(\omega)\textrm{ then }	X_s(\omega) \in A_k. 
		\]
		This proves \eqref{eq:Xsxrimp} and in consequence \eqref{eq:Pomegac}.

		Now we come back to calculating the probabilities in \eqref{lem:linf01:eq1}. The right-hand side of inequality \eqref{eq:Pomegac} is the probability that $X_s$ exits the ball of radius $r_k$ centered at $x$. We can estimate it from above by the probability that $X_s$ exits an inscribed cube whose sides are parallel to the coordinate axes. The length of its diagonal equals $a\sqrt{d} = 2r_k$, where $a$ is the cube's side length, so we get
		\begin{equation} \label{eq:erfc}
			\begin{split}
				\P\left( \sup_{0\leqslant s \leqslant t} \abs{X_s - x} \geqslant r_k \right) &\leqslant \P\left( \sup_{0\leqslant s \leqslant t} \max_{i} \abs{X^i_s - x_i} \geqslant \frac{a}{2} \right) = \P\left( \max_{i} \sup_{0\leqslant s \leqslant t} \abs{X^i_s - x_i} \geqslant \frac{a}{2} \right) \\
				&\leqslant d \cdot \P\left( \sup_{0\leqslant s \leqslant t} \abs{X^1_s - x_1} \geqslant \frac{a}{2} \right) \\
				&\leqslant d \cdot \P\left( \sup_{0\leqslant s \leqslant t} (X^1_s - x_1) \geqslant \frac{a}{2} \right) + d \cdot \P\left( \inf_{0\leqslant s \leqslant t} (X^1_s - x_1) \leqslant -\frac{a}{2} \right)\\
				&= 2d \cdot \P\left( \sup_{0\leqslant s \leqslant t} (X^1_s - x_1) \geqslant \frac{a}{2} \right) = 4d \cdot \P\left( (X_t^1 - x_1) \geqslant \frac{a}{2} \right) \\
				&\leqslant 4d \erfc \left( \frac{r_k}{\sqrt{2td}} \right) \leqslant 4d e^{-\frac{r_k^2}{2td}}.
			\end{split}
		\end{equation}
The last  equality in \eqref{eq:erfc} follows from the reflection principle for Brownian motion, while the last inequality is a well-known bound for the complementary error function $\erfc,$ see e.g.\ \cite[eq.\ (7.8.3)]{nist}.
		
		Consequently, 
		\begin{equation} \label{eq:Omkcest}
			\P \left( \Omega_k^c \right) \leqslant 4d e^{-\frac{r_k^2}{2td}}
		\end{equation}
		and coming back to \eqref{lem:linf01:eq1} for $0<t<1$ we get
		\begin{equation} \label{lem:linf01:eq2}
			\begin{split}e^{-tL}(\ind{})(x) &\lesssim e^{-tV(x)}+\sum_{k=1}^K e^{-\frac{tV(x)}{2^k}} e^{-\frac{r_{k-1}^2}{2td}} + e^{-\frac{r_{K}^2}{2td}}\\
				&\leqslant e^{-tV(x)}+\sum_{k=1}^K e^{-\frac{tV(x)}{2^k}} e^{-\frac{r_{k-1}^2}{2d}} + e^{-\frac{r_{K}^2}{2d}}
				\end{split}
		\end{equation}

		Integrating and multiplying this inequality by $V(x)^a$ gives
		\begin{equation} \label{lem:linf01:eq3}
			V(x)^{a} \int_0^1 e^{-tL}(\ind{})(x) \, t^{a-1} dt \lesssim 1+\sum_{k=1}^K 2^{ka} e^{-\frac{r_{k-1}^2}{2d}} + V(x)^{a} e^{-\frac{r_K^2}{2d}}.
		\end{equation}
	Then, for $k \geqslant 2$ we estimate each of the terms in the sum by an integral recalling that $r_k(x)=\rho_x(2^k)$ and using the fact that $\rho_x(u)$ is a non-decreasing function of $u$ 
	\begin{equation} \label{lem:linf01:eq4}
		2^{ka} e^{-\frac{r_{k-1}^2}{2d}} \leqslant \int_{k-2}^{k-1} 2^{(u+2)a} e^{-\frac{\rho_x^2(2^u)}{2d}} \, du.
	\end{equation}
	The last term in \eqref{lem:linf01:eq3} is estimated in a similar manner using additionally the fact that $V(x)^a \leqslant \int_{K-1}^K 2^{(u+2)a} \, du$. This yields
	\begin{equation} \label{lem:linf01:eq5}
		V(x)^{a} e^{-\frac{r_K^2}{2d}} \leqslant \int_{K-1}^{K} 2^{(u+2)a} e^{-\frac{\rho_x^2(2^u)}{2d}} \, du.
	\end{equation}
	We estimate the first term of the sum in \eqref{lem:linf01:eq3} by a constant and plug this, \eqref{lem:linf01:eq4} and \eqref{lem:linf01:eq5} into \eqref{lem:linf01:eq3}, which results in
	\begin{equation} \label{lem:linf01:eq5.5}
		\begin{aligned}
			1+\sum_{k=1}^K 2^{ka} e^{-\frac{r_{k-1}^2}{2d}} + V(x)^{a} e^{-\frac{r_K^2}{2d}} &\lesssim 1+\int_0^{K} 2^{ua} e^{-\frac{\rho_x^2(2^{u})}{2d}} du \\
			&\leqslant 1+\int_0^{\log_2 V(x)} 2^{ua} e^{-\frac{\rho_x^2(2^{u})}{2d}} du.
		\end{aligned}
	\end{equation}
	Finally we substitute $s = 2^u$ to get
	\begin{equation} \label{lem:linf01:eq6}
		1+\int_0^{\log_2 V(x)} 2^{ua} e^{-\frac{\rho_x^2(2^{u})}{2d}} du \approx 1+\int_1^{V(x)} s^{a-1} e^{-\frac{\rho_x^2(s)}{2d}} ds \leqslant 1+I^a(V)(x).
	\end{equation}
	\end{proof}
	In the next lemma we estimate the second part of the integral from \eqref{eq:Rav12}.

	\begin{lemma}
	\thlabel{lem:linf1inf}
		Let $V$ be an a.e. non-negative potential and suppose that, for some $\delta>0,$ the semigroup $e^{-tL}$ satisfies \eqref{eq:ed}. Take $a>0.$
		Then we have
		\begin{equation} \label{lem:linf1inf:eq1}
			V(x)^{a} \int_1^\infty e^{-tL}(\ind{})(x) \, t^{a-1} dt \lesssim I^a(V)(x)+1,\qquad x\in \R^d.
		\end{equation}
	\end{lemma}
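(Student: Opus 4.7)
The estimate cannot follow from $ED(\delta)$ alone, since the naive bound $V(x)^a\int_1^\infty e^{-\delta t}t^{a-1}\,dt \lesssim V(x)^a$ fails to be uniform in $x$. The strategy I would pursue is to use the semigroup property together with $ED(\delta)$ to reduce the whole tail integral to a pointwise bound on $V(x)^a\,e^{-L}(\ind{})(x)$, and then invoke the pointwise Feynman--Kac decomposition already established in the proof of \thref{lem:linf01} at the specific time $t=1$.

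Concretely, for $t\geqslant 1$ the Feynman--Kac representation gives
\begin{equation*}
e^{-tL}(\ind{})(x) = \E_x\!\left[e^{-\int_0^1 V(X_s)\,ds}\,\bigl(e^{-(t-1)L}\ind{}\bigr)(X_1)\right]
\leqslant \|e^{-(t-1)L}\ind{}\|_\infty\cdot e^{-L}(\ind{})(x),
\end{equation*}
since $e^{-L}$ is positivity preserving. Applying \eqref{eq:ed} to the sup-norm factor yields $e^{-tL}(\ind{})(x)\lesssim e^{-\delta(t-1)}\,e^{-L}(\ind{})(x)$, so integrating against $t^{a-1}$ and using that $\int_1^\infty e^{-\delta(t-1)}t^{a-1}\,dt$ is a finite constant depending only on $a$ and $\delta$, one obtains
\begin{equation*}
V(x)^a\int_1^\infty e^{-tL}(\ind{})(x)\,t^{a-1}\,dt \;\lesssim\; V(x)^a\,e^{-L}(\ind{})(x).
\end{equation*}

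To conclude, I would recycle the pointwise bound derived in the proof of \thref{lem:linf01} at $t=1$, namely
\begin{equation*}
e^{-L}(\ind{})(x) \lesssim e^{-V(x)} + \sum_{k=1}^{K} e^{-V(x)/2^k}\,e^{-r_{k-1}^2/(2d)} + e^{-r_K^2/(2d)},
\end{equation*}
with $K=\lfloor\log_2 V(x)\rfloor$ (the case $V(x)\leqslant 2$ is handled separately by absorbing $V(x)^a$ into a constant and using $ED(\delta)$ directly). Multiplying by $V(x)^a$ and using the elementary bound $s^a e^{-s}\lesssim 1$ for $s\geqslant 0$ gives $V(x)^a e^{-V(x)/2^k}\lesssim 2^{ka}$, so the main sum is dominated by $\sum_{k=1}^K 2^{ka}e^{-r_{k-1}^2/(2d)}$, which is precisely the expression converted into $I^a(V)(x)+1$ in \eqref{lem:linf01:eq3}. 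The residual term $V(x)^a e^{-r_K^2/(2d)}$ is absorbed by the same argument used there.

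The only delicate step is the reduction via the semigroup property: one must exploit $ED(\delta)$ applied to the operator $e^{-(t-1)L}$ acting on $\ind{}$, then transfer the remaining factor $e^{-L}(\ind{})(x)$ back to the already proven pointwise estimate at $t=1$. Once this is in place, the rest is a direct reuse of the calculation \eqref{lem:linf01:eq3}, so no new computation involving the local time of Brownian motion or the exit probabilities is required.
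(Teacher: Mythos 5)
Your proof is correct and follows essentially the same strategy as the paper: reduce the tail integral to a fixed-time quantity $V(x)^a\,e^{-t_0 L}(\ind{})(x)$ via the semigroup property and $ED(\delta)$, then invoke the pointwise Feynman--Kac decomposition from \thref{lem:linf01} at that time and reuse the conversion to $I^a(V)$. The only cosmetic difference is that the paper splits $e^{-tL}=e^{-(t/2)L}e^{-(t/2)L}$ and lands on $e^{-L/2}(\ind{})$, whereas you split at time $1$ and land on $e^{-L}(\ind{})$; both work equally well since the needed bound $\P(\Omega_k^c)\lesssim e^{-r_k^2/(2td)}$ holds at any fixed $t$.
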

		
	\begin{proof}
	Using the semigroup property and the positivity-preserving property of $\{e^{-tL}\}_{t>0}$ for $t \geqslant 1$ we obtain
		\begin{equation} \label{lem:linf1inf:eq2}
			\begin{split}
			e^{-tL}(\ind{})(x) &=	e^{-(t/2)L}[e^{-(t/2)L}(\ind{})](x)\leqslant \norm{e^{-(t/2)L}(\ind{})}_{\infty} e^{-(t/2)L}(\ind{})(x) \\
			&\leqslant C e^{-\delta t/2} e^{-(1/2)L}(\ind{})(x),
			\end{split}
		\end{equation}
	where the last two inequalities follow from   \eqref{eq:ed}  and \eqref{feynman-kac}.
		Plugging this into \eqref{lem:linf1inf:eq1} we get
		\begin{equation} \label{lem:linf1inf:eq3}
			V(x)^{a} \int_1^\infty e^{-tL}(\ind{})(x) \, t^{a-1} dt \lesssim V(x)^a e^{-L/2}(\ind{})(x).
		\end{equation}
		Now we are left with proving that $V^a(x) e^{-L/2}(\ind{})(x) \lesssim I^a(V)(x)+1.$ If $V(x) \leqslant 2$, then this is true. Assume that $V(x) > 2$ and let $K(x) = \lfloor \log_2 V(x) \rfloor$. Recall that by \eqref{lem:linf01:eq2} we have
		\[
		e^{-L/2}(\ind{})(x) \lesssim e^{-\frac{V(x)}{2}}+\sum_{k=1}^K e^{-\frac{V(x)}{2^{k+1}}} e^{-\frac{r_{k-1}^2}{2d}} + e^{-\frac{r_{K}^2}{2d}}.
		\]
		Since $V(x)^a e^{-\frac{V(x)}{2^{k+1}}}\le \left( \frac{2^{k+1} a}{e} \right)^a$, repeating calculations as in \eqref{lem:linf01:eq3}--\eqref{lem:linf01:eq6} we get
		\begin{equation} \label{lem:linf1inf:eq4}
			\begin{aligned}
				V(x)^a e^{-L/2}(\ind{})(x) &\lesssim 1+\sum_{k=1}^K 2^{ka} e^{-\frac{r_{k-1}^2}{2d}} + V(x)^a e^{-\frac{r_{K}^2}{2d}} \lesssim 1+I^a(V)(x).
			\end{aligned}
		\end{equation}
	In view of \eqref{lem:linf1inf:eq3} this completes the proof of the lemma. 
	\end{proof}

	Together, \thref{lem:linf01} and \thref{lem:linf1inf} lead to the following conclusion.
	
	\begin{theorem}
		\thlabel{th:linf}
		Let $V\in L^{\infty}_{\rm loc}$ be an a.e.\ non-negative potential. Suppose that the semigroup $e^{-tL}$ has exponential decay of order $\delta > 0$ (see \eqref{eq:ed}). If
		\begin{equation} \label{th:linf:eq1}
			I^a(V)\lesssim_g 1
		\end{equation}
		for some $a > 0$, then the operator $R_V^a$ is bounded on $L^\infty$.
	\end{theorem}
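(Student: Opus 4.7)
The plan is to invoke Definition \ref{def:Ravbinf}, which reduces the $L^\infty$ boundedness of $R_V^a$ to showing $\|R_V^a(\ind{})\|_\infty<\infty$. In view of formula \eqref{eq:Rav1} this amounts to establishing the pointwise estimate
\[
V(x)^a \int_0^\infty e^{-tL}(\ind{})(x)\, t^{a-1}\,dt \lesssim 1,\qquad x\in\R^d.
\]
I would split this integral at $t=1$. The part over $(0,1)$ is treated by a direct application of \thref{lem:linf01} (which requires no decay assumption), while the part over $(1,\infty)$ is handled by \thref{lem:linf1inf}, whose hypothesis \eqref{eq:ed} is exactly the standing assumption of the theorem. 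Adding the two resulting bounds yields
\[
\Gamma(a)\, R_V^a(\ind{})(x) \lesssim I^a(V)(x) + 1,\qquad x\in\R^d.
\]

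The remaining task is to bound the right-hand side uniformly in $x$. By the assumption \eqref{th:linf:eq1} there exist $R>0$ and $C>0$ with $I^a(V)(x)\leqslant C$ for $\abs{x}>R$, so it only remains to dispose of the compact region $\{\abs{x}\leqslant R\}$. Here I would exploit the local boundedness of $V$: picking $M$ with $V\leqslant M$ on this ball and using $e^{-\rho_x^2(s)/(4d)}\leqslant 1$ in \eqref{eq:def:IaV}, we get
\[
I^a(V)(x) \leqslant \int_0^{V(x)} s^{a-1}\,ds = \frac{V(x)^a}{a} \leqslant \frac{M^a}{a},\qquad \abs{x}\leqslant R.
\]
Thus $I^a(V)$ is uniformly bounded on $\R^d$, and combining with the previous display yields $R_V^a(\ind{}) \in L^\infty(\R^d)$ as required.

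Since the two technical lemmas preceding the theorem have already done the heavy lifting, there is essentially no substantial obstacle left. The only point requiring a moment of attention is that the hypothesis \eqref{th:linf:eq1} controls $I^a(V)$ only at infinity, so the local $L^\infty$ assumption on $V$ must be invoked to rule out any local blow-up in the bound. Everything else is a bookkeeping combination of \thref{lem:linf01} and \thref{lem:linf1inf}.
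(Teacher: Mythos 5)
Your proposal is correct and follows essentially the same route as the paper: decompose the integral at $t=1$, apply \thref{lem:linf01} and \thref{lem:linf1inf} to obtain the uniform bound by $I^a(V)(x)+1$, use the limsup hypothesis for large $|x|$, and invoke $V\in L^\infty_{\rm loc}$ on the remaining compact region. The only cosmetic difference is that you dispose of the compact region by bounding $I^a(V)(x)\leqslant V(x)^a/a$ directly, whereas the paper bounds the original integral there, but the ingredients are identical.
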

	\begin{proof}
		We need to estimate the quantity
		\begin{equation} \label{th:linf:eq2}
			V^a(x) \int_0^{\infty} e^{-tL}(\ind{})(x) \, t^{a-1}\,dt
		\end{equation}
		independently of $x$. Take $N > 0$ such that $I^a(V)(x) \lesssim  1$ for almost all $\abs{x} > N$. Then by \thref{lem:linf01} and \thref{lem:linf1inf} the expression $\eqref{th:linf:eq2}$ is uniformly bounded for a.e. $\abs{x} > N$. If on the other hand $\abs{x} \leqslant N$, then, since $V \in L_{\rm loc}^\infty$ and the semigroup satisfies \eqref{eq:ed}, the expression \eqref{th:linf:eq2} is uniformly bounded $x$-a.e.
	\end{proof}

	As an application of this theorem, we prove that $R_V^a$ is bounded on $L^\infty(\R^d)$ if $V$ is of the order of a power function or an exponential function. The corollary below is a restatement of one of our main results --- \thref{thm:exLinfint}.

	\begin{cor}
		\thlabel{cor:linf}
		Let  $V\colon \R^d\to [0,\infty)$ be a function in $L_{\rm loc}^\infty$. Then in all the three cases
		\begin{enumerate}
			\item $V(x)\approx 1$ globally
			\item For some $\alpha>0$ we have $V(x)\approx |x|^{\alpha}$ globally
			\item For some $\beta>1$ we have $V(x)\approx \beta^{|x|}$ globally
		\end{enumerate}
		each of the Riesz transforms $R_V^a,$ $a>0$, is bounded on $L^{\infty}(\R^d).$ 
	\end{cor}

	\begin{remark}
		More generally, the theorem also holds if in (2) and (3) we take an arbitrary norm on $\R^d$ instead of the Euclidean norm $\abs{\cdot}$. The proof is the same mutatis mutandis.
	\end{remark}

	\begin{proof}
			In the proof implicit constants in $\lesssim,$ $\gtrsim,$ and $\approx$ do not depend on $x\in \R^d$ but may depend on $a>0,$ $\alpha>0$ or $\beta>1.$
		
		Clearly in all three cases the assumptions of \thref{lem21} are satisfied, so the semigroup satisfies \eqref{eq:ed} and we only need to check that \eqref{th:linf:eq1} holds.
		
		In the first case $V(x)$ is bounded for almost all sufficiently large values of $\abs{x}$ and so is $I^a(V)(x)$ for all $a > 0$.
		
			In the second case we need to estimate from below $\rho_x(s)$ appearing in $I^a(V)$. We shall prove that $\rho_x(s)\geqslant |x|/2$ provided $s$ and $|x|$ are large enough. Let $N$ be such that for some $0<m<M$ it holds 
		\begin{equation}
			\label{eq:mxaVM}
			m\abs{x}^\alpha < V(x) < M\abs{x}^\alpha\qquad \textrm{for a.e.}\qquad \abs{x} \geqslant N.
		\end{equation}
	Take $|x|\geqslant 2N$ and assume that $|x-y|\leqslant |x|/2.$ Then $2|x|\geqslant|y|\geqslant |x|/2\geqslant N$ so that \eqref{eq:mxaVM} holds with $y$ in place of $x.$ Consequently, $V(x)\approx V(y)$ for such $x$ and $y$ so that for $s$ larger than some threshold depending only on $N$, $m$ and $M$ it holds $V(y)\geqslant V(x)/s.$  This means that for a.e. $|x|\geqslant 2N$ and uniformly large enough $s \geqslant 1$ we have $\rho_x(s)\geqslant |x|/2$. Thus, for any $a > 0$ we obtain
		\begin{equation}
		\label{eq:xaIbound}
			I^a(V)(x) \lesssim_g  1+|x|^{a\alpha}e^{-\frac{|x|^2}{16d}} \lesssim_g 1.
		\end{equation}
	 as desired.

		Finally we handle the third case.  We shall prove that $\rho_x(s)\geqslant \frac12 \min  \left( \abs{x}, \log_{\beta} s \right)$ provided $s$ and $|x|$ are large enough. Let $N>0$ be such that for some $0<m\leqslant 1 \leqslant M$ we have 
		\begin{equation}
			\label{eq:mxexpVM}
		m\beta^\abs{x} < V(x) < M\beta^{\abs{x}}\qquad \textrm{for a.e.}\qquad \abs{x} \geqslant N.
		\end{equation}	
	Take $|x|\geqslant 2N,$ $s>4,$ and assume that $|x-y|\leqslant \frac12 \min  \left( \abs{x}, \log_{\beta} s \right).$  Then, similarly to the previous paragraph, $|x|\approx |y|\geqslant N$ and \eqref{eq:mxexpVM} also holds with $y$ in place of $x.$ Therefore, for such $x$ and $y$ we have $\beta^{|y|-|x|}\approx V(y)/V(x).$ In particular $|y|-|x|-\gamma \leqslant\log_\beta V(y)-\log_\beta V(x),$ for some $\gamma>0$ independent of $x$ and $y.$ Hence, we reach
	\begin{equation}
		\label{eq:malVV}
	-\frac 12 \min  \left( \abs{x}, \log_{\beta} s \right)-\gamma \leqslant  \log_\beta V(y)-\log_\beta V(x).
	\end{equation}
	Taking $s$ large enough we see that $-\frac12 \log_{\beta} s - \gamma \geqslant -\log_\beta s$ and coming back to \eqref{eq:malVV} we obtain  
	$	V(x)/s\leqslant  V(y).$ In conclusion, we proved that $\rho_x(s)\geqslant \frac12 \min  \left( \abs{x}, \log_{\beta} s \right)$ for a.e. $|x|\geqslant 2N$ when $s$ is large enough (independently of $x$). Now, using \eqref{eq:mxexpVM}  we obtain the uniform in $|x|\geqslant 2N$ bound
		\begin{equation}
				\label{eq:bxIbound}
			I^a(V)(x) \lesssim_g 1+\int_1^{\beta^\abs{x}} s^{a-1} e^{-\frac{(\log_\beta s)^2}{16d}} ds + \int_{\beta^\abs{x}}^{M\beta^\abs{x}} s^{a-1} e^{-\frac{\abs{x}^2}{16d}} ds \lesssim_g 1,
		\end{equation}
		This completes the treatment of the third case and also the proof of \thref{cor:linf}.
	\end{proof}

	\section{$L^1$ boundedness for classes of potentials}
	\label{sec:l1}
	
	In this section we estimate the $L^1$ norm of the operator $R_V^a$ for $a>0$ and various non-negative potentials $V \in L_{\rm loc}^\infty$. Recall that the assumption $V \in L^{\infty}_{\rm loc}$ guarantees the validity of the Feynman--Kac formula \eqref{feynman-kac}. 
	
	The idea is to estimate the $L^\infty$ norm of the adjoint operator which formally is 
	\begin{equation*} 
		(L^{-a}V^a) f=\frac{1}{\Gamma(a)}\int_0^{\infty} e^{-tL}(V^a f)\,t^{a-1}\,dt.
	\end{equation*}
	Using the positivity-preserving property of $e^{-tL}$ the task naturally reduces to estimating the $L^{\infty}$ norm of the function  
	\begin{equation} \label{eq:adRav}
		\Gamma(a)L^{-a}(V^a)(x) \coloneqq \int_0^{\infty} e^{-tL}(V^a)(x)\,t^{a-1}\,dt.
	\end{equation}
	Since $V$ may be unbounded, the expression $e^{-tL}(V^a)(x)$ may be infinite for some $x$ in which case the $x$-measurability of the integral \eqref{eq:adRav} is not clear. To remedy the situation we formally define 
	\begin{equation} \label{eq:adRavfor}
		\Gamma(a)L^{-a}(V^a)(x) \coloneqq \lim_{N\to \infty}\int_0^{\infty} e^{-tL}(V^a\ind{|V|<N})(x)\,t^{a-1}e^{-t/N}\,dt.
	\end{equation} 
	Note that each of the integrals in \eqref{eq:adRavfor} is finite and measurable by \thref{lem:contfk}, hence the limit gives a measurable function by the monotone convergence theorem. A short duality argument shows that if $L^{-a}(V^a)\in L^{\infty},$ then indeed $R_V^a$ is bounded on $L^1$ with $\|R_V^a\|_{1}\leqslant \|L^{-a}(V^a)\|_{\infty}.$
	
	Throughout this section we estimate the $L^{\infty}$ norm of $L^{-a}(V^a)$ in the  form \eqref{eq:adRav}. This is allowed since by the assumptions which we will impose on $V$ both $e^{-tL}(V^a)(x)$ and the integral \eqref{eq:adRav} will turn out to be finite $x$-a.e.. This permits us to
	 take $N=\infty$ in \eqref{eq:adRavfor}.

	In what follows for $x\in \RR^d$ and $u \geqslant 1$ we let
	\[
		\sigma = \sigma_x(u) = \sup \left\{r \geqslant 0: V(y) \leqslant u V(x) \ \text{ for a.e. } y\in B(x,r) \right\}.
	\]
	Consequently, $\sigma_x(u)$ is the radius of the largest closed ball around $x$ in which the potential $V$ is at most $uV(x)$ a.e. We remark that $\sigma_x(u)$ is a non-decreasing function of $u$ with values in $[0,\infty]$. Using the quantity $\sigma_x(u)$ we define
	\begin{equation} \label{eq:Jdef}
		J^a(V)(x) \coloneqq \min(1,V(x)^a)\int_1^{\infty}s^{a-1} e^{-\sigma^2_x(s)/8}\,ds, \qquad \text{for a.e. } x\in \R^d.
	\end{equation}
	If $V\in L^{\infty}$ and $uV(x)\geqslant \|V\|_{\infty}$, then $V(y)\leqslant uV(x)$ for a.e. $y\in B(x,r)$ with arbitrarily large $r>0.$ In this case $\sigma_x(u)=\infty$ and by convention $e^{-\sigma^2_x(u)/8}=0.$ This is the case for instance if $V\in L^{\infty}$ is of constant order for large $x$.
	
	We begin with estimating the integral \eqref{eq:adRav} from 0 to 1. Recall that implicit constants in $\lesssim$ and $\approx$ are allowed to depend on $d$ and $a>0.$

	\begin{pro}
		\thlabel{lem:l101}
		Let  $V\in L^{\infty}_{\rm loc}$ be an a.e.\ non-negative potential and take $a>0$. Then the inequality
		\begin{equation} \label{eq:l101}
			\int_0^1 e^{-tL}(V^{a})(x) \, t^{a-1} dt \lesssim (J^a(V)(x)+1)(I^a(V)(x)+1)
		\end{equation}
		holds for a.e.\ $x\in \R^d$ that satisfies $V(x)\neq 0.$
		Moreover, if $V$ is an a.e.\ non-negative potential which satisfies the growth estimate $V(x)\lesssim \exp(|x|^2/(4a))$ for a.e. $x \in \R^d$, then
		\begin{equation} \label{eq:l101'}
			\int_0^1 e^{-tL}(V^{a})(x) \, t^{a-1} dt\lesssim \exp(|x|^2),\qquad x\in \R^d.
		\end{equation}
	\end{pro}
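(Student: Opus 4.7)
The plan is to start from the Feynman--Kac formula \eqref{feynman-kac1}, which gives
$$e^{-tL}(V^a)(x)=\E_x\bigl[e^{-\int_0^t V(X_s)\,ds}V^a(X_t)\bigr],$$
and to carry out two simultaneous dyadic decompositions indexed by the reference value $V(x)$: one on the terminal factor $V^a(X_t)$ via the dyadic levels $\{V(X_t)\in(2^{j-1}V(x),2^j V(x)]\}$, $j\geq 1$, plus the base event $\{V(X_t)\leq V(x)\}$; and one on the survival weight $e^{-\int_0^t V(X_s)\,ds}$ via the events $\Omega_k=\{V(X_s)\geq V(x)/2^k\text{ for all }s\in[0,t]\}$, exactly as in the proof of \thref{lem:linf01}. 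The two probabilistic inputs are Gaussian exit bounds: the inclusion $\{V(X_t)>2^{j-1}V(x)\}\subseteq\{|X_t-x|>\sigma_x(2^j)\}$ yields $\P(\cdot)\lesssim e^{-\sigma_x^2(2^j)/(2dt)}$, and as in \eqref{eq:Omkcest} one has $\P(\Omega_{k-1}^c)\lesssim e^{-r_{k-1}^2/(2dt)}$ with $r_{k-1}=\rho_x(2^k)$.

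Splitting the expectation into a diagonal part ($V(X_t)\leq V(x)$) and off-diagonal parts ($j\geq 1$), the diagonal part is immediately bounded by $V^a(x)\cdot e^{-tL}(\ind{})(x)$; integration against $t^{a-1}\,dt$ on $(0,1)$ combined with \thref{lem:linf01} gives the contribution $\lesssim I^a(V)(x)+1$. For the off-diagonal parts (assuming $V(x)>2$; the case $V(x)\leq 2$ reduces to crude bounds since $V^a(x)\leq 2^a$) I would further decompose $e^{-\int V}$ along the $\Omega_k$ hierarchy for $0\leq k\leq K:=\lfloor\log_2 V(x)\rfloor$, lumping the rest into a single $\ind{\Omega_K^c}$ piece. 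On the stratum $(\Omega_k\setminus\Omega_{k-1})\cap\{V(X_t)\in(2^{j-1}V(x),2^jV(x)]\}$ I use $e^{-\int V}\leq e^{-tV(x)/2^k}$, $V^a(X_t)\leq 2^{ja}V^a(x)$, and combine the two exit estimates via $\P(A\cap B)\leq\sqrt{\P(A)\P(B)}$ into a factor $e^{-(r_{k-1}^2+\sigma_x^2(2^j))/(4dt)}$. The elementary inequality $\int_0^1 e^{-Bt-A/t}t^{a-1}\,dt\leq e^{-A}\min(1/a,B^{-a}\Gamma(a))$ (using $e^{-A/t}\leq e^{-A}$ for $t\leq 1$) with $B=V(x)/2^k$ and $A=(r_{k-1}^2+\sigma_x^2(2^j))/(4d)$ then reduces the per-stratum contribution to $2^{ja}e^{-r_{k-1}^2/(4d)}e^{-\sigma_x^2(2^j)/(4d)}\cdot\min(V^a(x),2^{ka})$. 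Summation over $j\geq 1$ reproduces the integral $\int_1^\infty s^{a-1}e^{-\sigma_x^2(s)/(4d)}\,ds$ appearing in $J^a$; the $k=0$ piece contributes the extra factor $\min(1,V^a(x))$ (coming from $V^a(x)\int_0^1 e^{-tV(x)}t^{a-1}\,dt$), while summation over $1\leq k\leq K$ produces a factor $\approx I^a(V)(x)$ by the same dyadic-to-integral comparison as in \thref{lem:linf01}. The tail $k>K$ is then absorbed via the key inequality $V^a(x)e^{-r_K^2/(4d)}\lesssim I^a(V)(x)$, which follows from $r_K\geq\rho_x(V(x))$ together with the lower bound $\int_{V(x)/2}^{V(x)}s^{a-1}e^{-\rho_x^2(s)/(4d)}\,ds\gtrsim V^a(x)e^{-\rho_x^2(V(x))/(4d)}$. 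Putting everything together the off-diagonal total is $\lesssim J^a(V)(x)+I^a(V)(x)\cdot J^a(V)(x)$, which combined with the diagonal contribution yields the claimed $\lesssim(J^a(V)(x)+1)(I^a(V)(x)+1)$.

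For the alternative bound \eqref{eq:l101'} under $V(x)\lesssim e^{|x|^2/(4a)}$, I would simply drop the survival weight using $e^{-\int V}\leq 1$, reducing to
$$e^{-tL}(V^a)(x)\leq\E_x[V^a(X_t)]=(2\pi t)^{-d/2}\int_{\R^d} V^a(y)e^{-|y-x|^2/(2t)}\,dy.$$
Using $V^a(y)\lesssim e^{|y|^2/4}$ and completing the square in the exponent $|y|^2/4-|y-x|^2/(2t)$, whose coefficient of $|y|^2$ equals $-(1/(2t)-1/4)$ and is strictly negative for $t\in(0,1)$, a direct Gaussian integration shows $\E_x[V^a(X_t)]\lesssim e^{c|x|^2}$ uniformly in $t\in(0,1)$ with some $c<1$ (explicitly $c=1/(2(2-t))\leq 1/2$). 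Integrating $t^{a-1}$ over $(0,1)$ then gives $\lesssim e^{|x|^2}$ as stated.

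The hard part will be the book-keeping in the off-diagonal estimate, specifically recovering the factor $\min(1,V^a(x))$ present in $J^a$ rather than a naïve $V^a(x)$. The two decisive ingredients are the cancellation $V^a(x)\cdot\min(1/a,V(x)^{-a}2^{ka}\Gamma(a))\lesssim\min(V^a(x),2^{ka})$ produced by the $t^{a-1}$ integration, and the lower estimate $V^a(x)e^{-\rho_x^2(V(x))/(4d)}\lesssim I^a(V)(x)$ that allows the $\Omega_K^c$ tail to be folded back into $I^a$. Without these two observations the final bound would carry a spurious extra factor of $V^a(x)$ that would violate the target when $V(x)$ is large.
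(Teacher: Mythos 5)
Your proposal follows essentially the same route as the paper's proof of \thref{lem:l101}: a double dyadic decomposition of $e^{-tL}(V^a)(x)$ (in the terminal value $V(X_t)$ via the $\Psi_j$-type events, and in the survival weight via the $\Omega_k$ hierarchy), Gaussian exit bounds, and Cauchy--Schwarz to combine the two; the observation $V^a(x)e^{-r_K^2/(4d)}\lesssim I^a(V)(x)$ that you emphasize as decisive is indeed exactly what the paper uses in \eqref{lem:linf01:eq3} and \eqref{lem:linf1inf:eq4}. The organizational difference is that the paper derives two separate bounds, \eqref{lem:l101:eq:es1} (effective when $V(x)$ is large) and \eqref{lem:l101:eq:es2} (effective when $V(x)$ is small), and observes that together they give the $\min(1,V^a(x))$ factor; you instead do a single pass by keeping the $\min(V^a(x),2^{ka})$ that the $t^{a-1}$-integration produces. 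Both organizations are valid.

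There is one quantitative slip worth flagging. You bound $\P(|X_t-x|>\sigma_x(2^j))\lesssim e^{-\sigma_x^2(2^j)/(2dt)}$ by a coordinate-wise argument; after Cauchy--Schwarz with $\P(\Omega_{k-1}^c)\lesssim e^{-r_{k-1}^2/(2dt)}$ and the $t^{a-1}$-integration, the $j$-sum then reproduces $\int_1^\infty s^{a-1}e^{-\sigma_x^2(s)/(4d)}\,ds$. For $d\geqslant 3$ this is strictly larger than the integral with exponent $e^{-\sigma_x^2(s)/8}$ that defines $J^a$ in \eqref{eq:Jdef}, so as written you prove a genuinely weaker inequality than the stated one. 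The paper avoids this by estimating the time-$t$ Gaussian tail directly in $\R^d$, $\P(|X_t-x|\geqslant R)\lesssim e^{-R^2/(4t)}$ (see \eqref{lem:l101:eqPpsij}), which after Cauchy--Schwarz gives $e^{-\sigma_x^2(2^j)/(8t)}$ and hence the required $/8$. Replacing your coordinate bound by this one-shot Gaussian tail makes the argument deliver exactly the stated $J^a$. Your proof of \eqref{eq:l101'} (drop the survival weight, complete the square, note the $|y|^2$-coefficient $1/4-1/(2t)$ is negative for $t<1$ and the residual coefficient is $\leqslant 1/2$) coincides with the paper's.
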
 
	
	\begin{proof}

	{\bf Proof of \eqref{eq:l101}}. Here we consider $x\in\R^d$ such that  $V(x)\neq 0.$ 
			
		Recall that
		\[
			A_k = \left\{ y \in \R^d: \frac{V(x)}{2^k} \leqslant V(y)  \right\}
		\]
		and
		\[
			\Omega_k = \left\{ \omega \in \Omega: X_s(\omega) \in A_k  \text{ for almost all } s \in[0, t] \right\}.
		\]
		Here we shall also need 
		\[
			B_j=\left\{y\in \R^d\colon \, 2^{j} V(x)<V(y) \leqslant 2^{j+1} V(x) \right\}
		\]
		and
		\[
			\Psi_j = \Psi_j^t \coloneqq  \left\{ \omega \in \Omega: X_t(\omega) \in B_j \right\}.
		\]
		Note that if $V(x)\neq 0$ then the sets $\{B_j\}_{j\in \Z}$ are pairwise disjoint and 
		\begin{equation} \label{lem:l101:eq0}
			\begin{split}
				e^{-tL}(V^{a})(x)
				&=e^{-tL}\left(\sum_{j\le0} \ind{B_j} V^a\right)(x) + e^{-tL} \left(\sum_{j>0}\ind{B_j}V^a \right)(x)\\
				&\lesssim V(x)^{a} e^{-tL}(\ind{})(x) + \sum_{j>0}V(x)^{a}2^{ja} e^{-tL}(\ind{B_j})(x).
			\end{split}
		\end{equation}
			
		We shall prove that the estimates
		\begin{equation} \label{lem:l101:eq:es1}
			\int_0^1 e^{-tL}(V^{a})(x) t^{a-1} dt \lesssim (I^a(V)(x)+1) \left(\int_1^{\infty}s^{a-1} e^{-\sigma^2_x(s)/8}\,ds+1\right)
		\end{equation}
		and
		\begin{equation} \label{lem:l101:eq:es2}
		\int_0^1 e^{-tL}(V^{a})(x) t^{a-1} dt \lesssim I^a(V)(x)+1 + V(x)^a \left(\int_1^{\infty}s^{a-1} e^{-\sigma^2_x(s)/8}\,ds \right)
		\end{equation}
	hold for $x$ such that $V(x)\neq 0.$
	The inequalities \eqref{lem:l101:eq:es1} and \eqref{lem:l101:eq:es2} imply \eqref{eq:l101}.
			
		We prove \eqref{lem:l101:eq:es1} first. 
		Let $K = \max(1,\lfloor \log_2 V(x) \rfloor)$ and for $k=1,\ldots,K$ and $j\in \Z$ denote 
		\[
			r_k = \rho_x(2^{k}),\qquad s_j = \sigma_x(2^{j}).
		\]
		Estimating the second term in \eqref{lem:l101:eq0} we use the Feynman--Kac formula \eqref{feynman-kac} with $f=V^{a}\ind{B_j}$ to write
		\begin{equation} \label{lem:l101:eq1}
			\sum_{j> 0}e^{-tL}(V^{a}\ind{B_j})(x)\lesssim V^a(x)\sum_{j> 0}2^{ja}e^{-tL}(\ind{B_j})(x).
		\end{equation}
Using again \eqref{feynman-kac}, proceeding as in the proof of \thref{lem:linf01} and applying \eqref{eq:Omkcest} we  obtain 
		\begin{align*}
				e^{-tL}(\ind{B_j})(x) &\leqslant e^{-tV(x)}\P\left(\Psi_j\right)+\sum_{k=1}^K e^{-\frac{tV(x)}{2^k}}\P \left( \Omega_{k-1}^c \cap \Psi_j\right)+\P \left( \Omega_K^c \cap \Psi_j\right)\\
				&\leqslant \P ( \Psi_j)^{1/2}\left( e^{-tV(x)}+\sum_{k=1}^K e^{-\frac{tV(x)}{2^k}}\left[\P \left( \Omega_{k-1}^c\right)\right]^{1/2}+\left[\P \left( \Omega_{K}^c\right)\right]^{1/2}\right)\\
				&\lesssim \P ( \Psi_j)^{1/2}\left( e^{-tV(x)}+\sum_{k=1}^K e^{-\frac{tV(x)}{2^k}}e^{-\frac{r_{k-1}^2}{4td}}+e^{-\frac{r_{K}^2}{4td}}\right)
		\end{align*}
		Further, we have $\Psi_j\subseteq \{\omega \in \Omega \colon X_t(\omega) \not \in B(x,s_j)\}$ up to a set of $\mathbb{P}$ measure $0$. Indeed, a.e.\ $y\in B(x,s_j)$ satisfies $V(y)\le 2^jV(x),$ hence it lies outside $B_j.$ Here we also use the fact that $X_t$ has a continuous distribution. Thus we reach
		\begin{equation} \label{lem:l101:eqPpsij}
			\begin{aligned}
				\P(\Psi_j) &\leqslant \P(|X_t-x|\geqslant s_j) = \frac{1}{(2\pi t)^{d/2}} \int_{\abs{y} \geqslant s_j} e^{-\frac{\abs{y}^2}{2t}} dy \\
				&\leqslant \frac{e^{-s_j^2/(4t)}}{(2\pi t)^{d/2}}  \int_{\abs{y} \geqslant s_j} e^{-\frac{\abs{y}^2}{4t}} dy \lesssim e^{-s_j^2/(4t)}
			\end{aligned} 
		\end{equation}
		so that
		\[
		e^{-tL}(\ind{B_j})(x)\lesssim e^{-s_j^2/(8t)} \left( e^{-tV(x)}+\sum_{k=1}^K e^{-\frac{tV(x)}{2^k}}e^{-\frac{r_{k-1}^2}{4td}}+e^{-\frac{r_{K}^2}{4td}}\right).
		\]
		Putting the above bound in \eqref{lem:l101:eq0} and replacing the sum over $j$ with an integral as in \eqref{lem:linf01:eq5} and \eqref{lem:linf01:eq5.5} we reach
		\begin{align*}
			&\sum_{j>0}V(x)^{a}2^{ja}e^{-tL}(\ind{B_j})(x)\lesssim V(x)^{a}  \left( e^{-tV(x)}+\sum_{k=1}^K e^{-\frac{tV(x)}{2^k}}e^{-\frac{r_{k-1}^2}{4td}}+e^{-\frac{r_{K}^2}{4td}}\right)\sum_{j>0} 2^{ja} e^{-s_j^2/(8t)}\\
			&\lesssim V(x)^a \left( e^{-tV(x)}+\sum_{k=1}^K e^{-\frac{tV(x)}{2^k}} e^{-\frac{r_{k-1}^2}{4td}} + e^{-\frac{r_{K}^2}{4td}}\right) \int_1^{\infty}s^{a-1} e^{-\sigma^2_x(s)/(8t)}\,ds.
		\end{align*}
		The first term on the right-hand side of \eqref{lem:l101:eq0} was already estimated in the proof of \thref{lem:linf01} by
		\[
			V(x)^{a} e^{-tL}(\ind{})(x)\leqslant V(x)^a \left( e^{-tV(x)}+\sum_{k=1}^K e^{-\frac{tV(x)}{2^k}} e^{-\frac{r_{k-1}^2}{2td}} + e^{-\frac{r_{K}^2}{2td}}\right),
		\]
		see \eqref{lem:linf01:eq2}. Hence, coming back to \eqref{lem:l101:eq0} we reach 
		\[
			e^{-tL}(V^{a})(x)\lesssim V(x)^a \left(\int_1^{\infty}s^{a-1} e^{-\sigma^2_x(s)/8}\,ds+1\right)\left(e^{-tV(x)}+\sum_{k=1}^K e^{-\frac{tV(x)}{2^k}} e^{-\frac{r_{k-1}^2}{4td}} + e^{-\frac{r_{K}^2}{4td}}\right)
		\]
		We use the above inequality to estimate $\int_0^1 	e^{-tL}(V^{a})(x)\,t^{a-1}\,dt.$ From this point on the proof is a  repetition of the argument in \eqref{lem:linf01:eq3}--\eqref{lem:linf01:eq6} that leads to \eqref{lem:l101:eq:es1}.
			
		Now we pass to the proof of \eqref{lem:l101:eq:es2}. This time we merely estimate $e^{-tL}(\ind{B_j})(x)$ by $\P(\Psi_j).$ In view of \eqref{lem:l101:eq0} and \eqref{lem:l101:eqPpsij} proceeding as in the proof of \eqref{lem:l101:eq:es1} we thus obtain
		\begin{align*}
			&e^{-tL}(V^a)(x)\lesssim V(x)^a	\left(e^{-tV(x)}+\sum_{k=1}^K e^{-\frac{tV(x)}{2^k}} e^{-\frac{r_{k-1}^2}{2td}} + e^{-\frac{r_{K}^2}{2td}} \right) + V(x)^a\sum_{j>0}2^{ja} e^{-s_j^2/(4t)} \\
			&\lesssim V(x)^a \left(e^{-tV(x)}+\sum_{k=1}^K e^{-\frac{tV(x)}{2^k}} e^{-\frac{r_{k-1}^2}{2td}} + e^{-\frac{r_{K}^2}{2td}}\right)+V(x)^a \int_1^{\infty}s^{a-1} e^{-\sigma^2_x(s)/8}\,ds.
		\end{align*}
		Once again we integrate the above expression by repeating the argument in \eqref{lem:linf01:eq3}--\eqref{lem:linf01:eq6} and obtain \eqref{lem:l101:eq:es2}.

		{\bf Proof of \eqref{eq:l101'}} The growth assumption on $V$ implies that
		\[
			\E_x[V(X_t)^a]\lesssim (2\pi t)^{-d/2}\int_{\R^d} e^{-|y-x|^2/(2t)}e^{|y|^2/4}\,dy. 
		\]
		Then, a short calculation leads to
		\begin{equation} \label{lem:l101:eq:ExVest}
			\E_x[V(X_t)^a] \lesssim  \exp(|x|^2),\qquad t<1.
		\end{equation}
		Thus, using the Feynman--Kac formula \eqref{feynman-kac} we estimate 
		\[
			e^{-tL}(V^a)(x) \leqslant \E_x[V(X_t)^a]\lesssim \exp(|x|^2),
		\]
		so that
		\[
			\int_0^1 e^{-tL}(V^{a})(x) \, t^{a-1} dt\lesssim \exp(|x|^2).
		\]
		
		This completes the proof of \thref{lem:l101}.
			
		\end{proof}

Now we pass to the integral \eqref{eq:adRav} restricted to the range $[1,\infty)$. We shall prove several results with varying assumptions on the potential $V.$
For this reason the treatment here is significantly more complicated than in Section \ref{sec:linf}.

We start with a counterpart of \thref{lem:l101}. To this end we need yet another quantity 
	\begin{equation}
		\label{eq:tJdef}
		K^{a}_{c}(V)(x) \coloneqq \min(1,V(x)^a)\int_1^{\infty} e^{-c\sigma_x(s)}s^{a-1}\,ds, \qquad \text{for a.e. } x\in \R^d,
	\end{equation}
	where $a,c>0.$ Note that this is essentially larger than $J^a(V)(x)$ defined by \eqref{eq:Jdef} and used in \thref{lem:l101}. Indeed, observe that for each $c>0$ there is a constant $M$ independent of $x$ and $s$ such that $\frac{\sigma_x^2(s)}{8} \geqslant c\sigma_x(s) - M$ for all $s \geqslant 1$ and $x \in \R^d$, which means that $e^{-\sigma_x^2(s)/8} \leqslant e^{M} e^{-c\sigma_x(s)}$ and in turn
	\begin{equation}
		\label{eq:comKJ}
		J^a(V)(x)\lesssim K^a_c(V)(x).
	\end{equation}

	\begin{pro}
	\thlabel{lem:l101lt}
		Let  $V$ be an a.e. non-negative potential. Assume that the semigroup $e^{-tL}$ satisfies \eqref{eq:ed} with some $\delta>0.$ Let $a>0$, take $b>a$ and define \begin{equation}
			\label{eq:cdef}
			c=\min\left(\frac{b-a}{8b},\frac{\delta a}{4b}\right).\end{equation} Then 
		\begin{equation}
			\label{eq:l101lt}
			\int_1^{\infty} e^{-tL}(V^{a})(x) \, t^{a-1} dt \lesssim  (	K^{a}_{c}(V)(x) + 1) (I^b(V)(x)+1)
		\end{equation}
		uniformly in every $x$ such that $V(x)\neq 0.$ 
		
		Moreover, if $V$ is of exponential growth $\eta$, i.e.
		\begin{equation}
			\label{eq:expgro}
			V(x)\lesssim e^{\eta|x|},
		\end{equation}
		with $\eta<\sqrt{\delta}/(\sqrt{2d}a),$ then
		\begin{equation}
			\label{eq:l101lt'}
			\int_1^{\infty} e^{-tL}(V^{a})(x) \, t^{a-1} dt\lesssim \exp(\sqrt{d}a\eta|x|),\qquad x\in \R^d.
		\end{equation}
\end{pro}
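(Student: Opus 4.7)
The plan is to extend the level-set analysis of \thref{lem:l101} to the long-time regime $t \geqslant 1$ by inserting the exponential-decay hypothesis \eqref{eq:ed} via a standard semigroup trick. The pointwise preliminary I would establish is that, for $t \geqslant 1$, the factorization $e^{-tL}(\ind{}) = e^{-(t/2)L}(e^{-(t/2)L}(\ind{}))$ together with positivity, \eqref{eq:ed}, and the monotonicity of $s \mapsto e^{-sL}(\ind{})(x)$ yield
\[
e^{-tL}(\ind{})(x) \lesssim e^{-\delta t/2}\, U(x),
\]
where $U(x) := e^{-L/2}(\ind{})(x)$ already admits from the proof of \thref{lem:linf1inf} the pointwise estimate
\[
U(x) \lesssim e^{-V(x)/2} + \sum_{k=1}^{K} e^{-V(x)/2^{k+1}} e^{-r_{k-1}^{2}/(2d)} + e^{-r_{K}^{2}/(2d)}.
\]

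The main pointwise step is H\"older's inequality with conjugate exponents $p = b/a$ and $q = b/(b-a)$ applied to the Feynman--Kac representation:
\[
e^{-tL}(V^a)(x) \leqslant \bigl(e^{-tL}(V^b)(x)\bigr)^{a/b}\bigl(e^{-tL}(\ind{})(x)\bigr)^{(b-a)/b},
\]
whose second factor is $\lesssim e^{-\delta(b-a)t/(2b)}\,U(x)^{(b-a)/b}$. For the first factor I would decompose $V^b(X_t)$ along the dyadic shells $B_j$ as in the proof of \thref{lem:l101} and control each tail piece by Cauchy--Schwarz, $\E_x[e^{-\int_0^t V}\ind{B_j}(X_t)] \leqslant (\E_x[e^{-\int V}])^{1/2}\,\P(X_t \in B_j)^{1/2} \lesssim e^{-\delta t/2}\,e^{-s_j^{2}/(4td)}$ with $s_j = \sigma_x(2^{j+1})$, where the Brownian tail bound is as in \eqref{eq:erfc}. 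Subadditivity of $(\cdot)^{a/b}$ (valid since $a/b < 1$) applied inside the sums, together with $U \leqslant 1$, then combines everything into the pointwise master estimate
\[
e^{-tL}(V^a)(x) \lesssim V^a(x)\,e^{-\delta t/2}\Bigl(U(x) + \sum_{j>0} 2^{(j+1)a}\,e^{-a s_j^{2}/(4tdb)}\Bigr).
\]

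Integrating in $t$ over $[1,\infty)$ produces two contributions. The $U$-term gives $V^a(x)\,U(x) \cdot \int_1^\infty e^{-\delta t/2}\,t^{a-1}\,dt \lesssim V^a(x)\,U(x)$, which, using $V^a(x)\,e^{-V(x)/2^{k+1}} \lesssim 2^{ka}$ and the dyadic-to-integral comparison from the proof of \thref{lem:linf1inf}, is bounded by $I^a(V)(x) + 1 \lesssim I^b(V)(x) + 1$. The $j$-sum uses the AM--GM inequality $\delta t/4 + a s_j^{2}/(4tdb) \geqslant s_j \sqrt{a\delta/(4db)}$ while reserving the remaining half $\delta t/4$ for integrability in $t$, yielding $\int_1^\infty e^{-\delta t/2 - a s_j^{2}/(4tdb)}\,t^{a-1}\,dt \lesssim e^{-c' s_j}$ for some $c' \geqslant c$. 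A dyadic-to-integral comparison converts $V^a(x)\sum_j 2^{(j+1)a}\,e^{-c' s_j}$ into $V^a(x)\int_1^\infty s^{a-1} e^{-c\sigma_x(s)}\,ds$, which --- after a short case split on $V(x) \leqslant 1$ versus $V(x) > 1$ and using $V^a(x) \lesssim V^b(x) + 1 \lesssim b\,I^b(V)(x) + 1$ in the latter --- fits into the claimed $(K^a_c(V)(x)+1)(I^b(V)(x)+1)$ form.

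For the exponential-growth refinement \eqref{eq:l101lt'} I would bypass the dyadic analysis entirely. The coordinatewise bound $\E_x[e^{b\eta|X_t-x|}] \leqslant 2^d e^{db^2\eta^2 t/2}$ (via $|X_t-x| \leqslant \sum_i |X_t^i - x_i|$, independence, and the Gaussian moment generating function) gives $\E_x[V^b(X_t)] \lesssim e^{b\eta|x| + db^2\eta^2 t/2}$. Using $e^{-\int V} \leqslant 1$ in the first factor of the H\"older bound and combining with \eqref{eq:ed} in the second, the integrand becomes $\lesssim e^{a\eta|x|}\,e^{-(\delta(b-a)/b - dab\eta^2/2)t}$; the choice $b = 2a$ is optimal, making the $t$-exponent positive precisely when $\eta < \sqrt{\delta}/(\sqrt{2d}\,a)$, and the resulting integral yields a bound $\lesssim e^{a\eta|x|} \leqslant e^{\sqrt{d}\,a\eta|x|}$ since $\sqrt{d} \geqslant 1$. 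The main technical obstacle will be the precise matching of the AM--GM rate $c'$ to the minimum defining $c$, which requires distinguishing which of its two branches is active, together with the careful case bookkeeping needed to absorb the $V^a(x)$ prefactor into the $(K+1)(I+1)$ product.
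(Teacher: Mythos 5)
Your argument for the second part (\eqref{eq:l101lt'}, the exponential-growth bound) is sound: applying H\"older with exponents $b/a$, $b/(b-a)$, dropping $e^{-\int V}\leqslant 1$ in the first factor, and the coordinatewise Gaussian moment bound with $b=2a$ yields the correct threshold $\eta<\sqrt{\delta}/(\sqrt{2d}a)$, and in fact a slightly sharper bound $e^{a\eta|x|}\leqslant e^{\sqrt{d}a\eta|x|}$ than the paper's Cauchy--Schwarz route.

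The first part has a genuine gap. Your final absorption step uses the inequality $V^a(x)\lesssim V^b(x)+1\lesssim b\,I^b(V)(x)+1$, but this is false — in fact the inequality goes the other way. Since $e^{-\rho_x^2(s)/(4d)}\leqslant 1$, one has
\[
I^b(V)(x)=\int_0^{V(x)}s^{b-1}e^{-\rho_x^2(s)/(4d)}\,ds\leqslant\frac{V(x)^b}{b},
\]
so $bI^b(V)(x)\leqslant V(x)^b$, and for a constant potential $V\equiv M$ large one has $I^b(V)\approx 2^b/b$ bounded while $V^b=M^b\to\infty$. Thus, in the case $V(x)>1$, the quantity $V^a(x)\int_1^\infty s^{a-1}e^{-c\sigma_x(s)}\,ds$ cannot be absorbed into $(K^a_c(V)(x)+1)(I^b(V)(x)+1)$ by that inequality. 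The root cause is structural: you apply H\"older first to get $(e^{-tL}V^b)^{a/b}(e^{-tL}\ind{})^{(b-a)/b}$ and then Cauchy--Schwarz inside the first factor, which, after raising to the power $a/b$ and discarding $U=e^{-L/2}(\ind{})\leqslant 1$, leaves a naked $V^a(x)$ in front of the $j$-sum. The paper instead factors $V^a(X_t)\approx V^a(x)2^{ja}$ on each dyadic shell \emph{before} applying H\"older with $p=b/a$ to $\E_x[e^{-\int V}\ind{\Psi_j}]$; this produces $V^a(x)\bigl(e^{-tL}\ind{}\bigr)^{a/b}=\bigl(V^b(x)e^{-tL}\ind{}\bigr)^{a/b}$, and it is the combination $V^b e^{-L/2}(\ind{})$ — not $V^b$ alone — that is controlled by $I^b(V)+1$ (via the computation in \thref{lem:linf1inf}, eq.\ \eqref{lem:linf1inf:eq4}). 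Your exponent on $e^{-tL}\ind{}$ ends up being $(2b-a)/(2b)$ rather than $a/b$, which would only pair with $V^a$ to give a power $\leqslant b$ when $b\geqslant 3a/2$ or so, and in any case you discard the $U$ factor altogether. A correct version therefore needs either the paper's two-estimate scheme (one estimate featuring $I^b$, the other $V^a$, combined by a case split on $V(x)\gtrless 1$) or a careful retention of a compensating power of $e^{-L/2}(\ind{})$ multiplying $V^a$. A secondary, fixable point: your AM--GM rate $c'=\sqrt{a\delta/(4db)}$ carries a spurious $d$ because you invoke the $\erfc$ bound of \eqref{eq:erfc}; using instead the direct Gaussian tail estimate $\P(|X_t-x|\geqslant s_j)\lesssim e^{-s_j^2/(4t)}$ (as in \eqref{lem:l101:eqPpsij}) removes the $d$ and one can check $c'\geqslant c$ always.
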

\begin{remark}
		The implicit constants in \eqref{eq:l101lt}, \eqref{eq:l101lt'} possibly depend on $a,b,\delta,\eta.$
\end{remark}
	
	\begin{proof}	
		{\bf Proof of \eqref{eq:l101lt}}.
		Using the splitting into the sets $B_j$ as in \eqref{lem:l101:eq0} and  the Feynman--Kac formula \eqref{feynman-kac} we obtain
		\begin{align*}
			e^{-tL}(V^{a})(x) &\lesssim  V(x)^{a} e^{-tL}(\ind{})(x)+\sum_{j>0}V(x)^{a}2^{ja}e^{-tL}(\ind{B_j})(x)\\
			&\lesssim V(x)^{a} e^{-tL}(\ind{})(x)+\sum_{j>0}V(x)^{a}2^{ja}\E_x[ e^{-\int_0^t V(X_s) ds}\ind{\Psi_j}]
		\end{align*}
		By \thref{lem:linf1inf} we have
		\[
			\int_1^{\infty} V(x)^{a} e^{-tL}(\ind{})(x)\,t^{a-1}\,dt\lesssim I^a(V)(x)+1 \lesssim I^b(V)(x)+1.
		\]
		Hence, we only focus on the integral over the second term, namely 
		$\int_1^{\infty} S_x(t)\,t^{a-1}\,dt$
		with
		\begin{equation}
			\label{lem:l101lt:eq:st}
			S_x(t) \coloneqq \sum_{j>0}V(x)^{a}2^{ja}\E_x[ e^{-\int_0^t V(X_s) ds}\ind{\Psi_j}].
		\end{equation}
		
		Let $p=b/a$ and let $q$ be its conjugate exponent. Then H\"older's inequality gives
		\begin{equation}
			\label{lem:l101lt:eq:stest0}
			\begin{split}
				S_x(t)&\leqslant \sum_{j>0}V(x)^{a}2^{ja}\left(\E_x[ e^{-p\int_0^t V(X_s) ds}]\right)^{1/p}\left(\E_x[\ind{\Psi_j}]\right)^{1/q}\\
				&\lesssim \sum_{j>0}V(x)^{a}2^{ja}\left(e^{-tL}(\ind{})(x)\right)^{1/p}\P(\Psi_j)^{1/q}.
			\end{split}
		\end{equation}
		Using \eqref{lem:l101lt:eq:stest0} we shall prove that
		\begin{equation}
			\label{lem:l101lt:eq:es1}
			\int_1^{\infty} S_x(t)\,t^{a-1}\,dt \lesssim (I^b(V)(x)+1) \left(\int_1^{\infty}e^{-c\sigma_x(s)}s^{a-1}\,ds+1\right).
		\end{equation}
		and
		\begin{equation}
			\label{lem:l101lt:eq:es2}
			\int_1^{\infty} S_x(t)\,t^{a-1}\,dt \lesssim V(x)^a\left(\int_1^{\infty} e^{-c\sigma_x(s)}s^{a-1}\,ds\right).
		\end{equation}
		These two inequalities imply that
		\[
		\int_1^{\infty} S_x(t)\,t^{a-1}\,dt\lesssim (K^{a}_{c}(V)(x)+1) (I^b(V)(x)+1),
		\]
		and thus are enough to complete the proof of \eqref{eq:l101lt}.
		
		We start with \eqref{lem:l101lt:eq:es1}. Using monotonicity, the semigroup property, and \eqref{eq:ed}  we obtain that 
		\[
		e^{-tL}(\ind{})(x)=e^{-tL/2}(e^{-tL/2}(\ind{}))(x)\lesssim e^{-\delta t/2 }e^{-L/2}(\ind{})(x). 
		\]
		Hence, \eqref{lem:l101lt:eq:stest0} gives
		\begin{align*}
			S_x(t)&\leqslant e^{-\delta t/(2p) }\left(V(x)^{ap}e^{-L/2}(\ind{})(x)\right)^{1/p}\cdot \sum_{j>0}2^{ja}\P(\Psi_j)^{1/q}.
		\end{align*}
		Since $ap= b$ a repetition of the computation in \eqref{lem:linf1inf:eq4} shows that
		\begin{equation}
			\label{lem:l101lt:eq:Sx1}
			S_x(t)\lesssim (I^b(V)(x)+1)\cdot e^{-\delta t/(2p)}\cdot \sum_{j>0}2^{ja}\P(\Psi_j)^{1/q}.
		\end{equation}
		Now, using the estimate \eqref{lem:l101:eqPpsij} for $\P(\Psi_j)$ we obtain
		\begin{equation}
			\label{lem:l101lt:eq:sPj}
			\sum_{j>0}2^{ja}\P(\Psi_j)^{1/q}\lesssim \sum_{j>0}2^{ja} e^{-s_j^2/(4tq)}.
		\end{equation}
		Consider the integral
		\[
			\int_1^{\infty} e^{-\delta t /(2p)}e^{-s_j^2/(4tq)}t^{a-1}\,dt.
		\]
		We split it at $t = s_j$ and estimate each part separately:
		\begin{align*}
			\int_1^{\infty} e^{-\delta t /(2p)} &e^{-s_j^2/(4tq)}t^{a-1}\,dt \leqslant \int_1^{s_j} e^{-s_j^2/(4tq)}t^{a-1}\,dt +\int_{s_j}^{\infty} e^{-\delta t /(2p)}t^{a-1}\,dt \\
			&\lesssim e^{-s_j/(8q)} + e^{-\delta s_j/(4p)} \lesssim e^{-c s_j}.
		\end{align*}
Recall that $c=\min((b-a)/(8b),\delta a/(4b)).$ Formally, the splitting above only works when $s_j\geqslant 1,$ however, the estimate
	$$\int_1^{\infty} e^{-\delta t /(2p)} e^{-s_j^2/(4tq)}t^{a-1}\,dt\lesssim  e^{-c s_j}$$ remains true for any $s_j\geqslant 0$.
Consequently, integrating \eqref{lem:l101lt:eq:sPj} we get
		\begin{equation}
			\label{lem:l101lt:eq:intsumsPj}
			\int_1^{\infty}  e^{-\delta t/(2p)}\cdot \sum_{j>0}2^{ja}\P(\Psi_j)^{1/q}t^{a-1}\,dt \leqslant \sum_{j>0} 2^{ja} e^{-c s_j}\lesssim \int_1^{\infty}e^{-c \sigma_x(s)}s^{a-1}\,ds,
		\end{equation}
	where in the last inequality above we used the fact that $s_j=\sigma_x(2^{j}).$ Combining \eqref{lem:l101lt:eq:intsumsPj} with \eqref{lem:l101lt:eq:Sx1} gives \eqref{lem:l101lt:eq:es1}.
		
		We pass to the proof of \eqref{lem:l101lt:eq:es2}. Note that \eqref{lem:l101lt:eq:stest0} and the assumption \eqref{eq:ed} imply
		\begin{equation*}
			S_x(t)\lesssim e^{-\delta t/p} \sum_{j>0}V(x)^{a}2^{ja}\P(\Psi_j)^{1/q},
		\end{equation*}
		thus, an application of \eqref{lem:l101lt:eq:intsumsPj} produces
		\[
		\int_1^{\infty}	S_x(t)\,t^{a-1}\,dt \lesssim V(x)^a\int_1^{\infty}e^{-c\sigma_x(s)}s^{a-1}\,ds,
		\]
		and \eqref{lem:l101lt:eq:es2} is justified.
		
		{\bf Proof of \eqref{eq:l101lt'}}. Using the Feynman--Kac formula \eqref{feynman-kac1} and Cauchy--Schwarz inequality we obtain
		\begin{align*}
			e^{-tL}(V^{a})(x)&\leqslant \E_x \left[ V^{2a}(X_t) \right]^{1/2}\E_x \left[ e^{-2\int_0^t V(X_s) ds}  \right]^{1/2}\\
			&\leqslant \E_x \left[  V^{2a}(X_t) \right]^{1/2}\left(e^{-tL}(\ind{})(x)\right)^{1/2}.
		\end{align*}
		Hence, the assumptions \eqref{eq:ed} and \eqref{eq:expgro} give
		\[
			e^{-tL}(V^{a})(x) \lesssim e^{-\delta t /2} \left(\E_x   e^{2\eta a|X_t|} \right)^{1/2}.
		\]
		We claim that the proof of  \eqref{eq:l101lt'} will be completed if we show that
		\begin{equation}
			\label{lem:l101lt:eq:claim1}
			\E_x   e^{2\eta a|X_t|}\lesssim \exp(2d\eta^2 a^2 t+ 2\sqrt{d}\eta a |x|).
		\end{equation}
		Indeed, the above estimate  leads to
		\begin{align*}
			\int_1^{\infty} e^{-tL}(V^{a})(x)\,t^{a-1}\,dt\lesssim  e^{\sqrt{d}\eta a|x|}\int_1^{\infty} \exp(-\delta t /2+d\eta^2 a^2 t)\,t^{a-1}\,dt\lesssim e^{\sqrt{d}\eta a|x|}, \end{align*}
		where in the last inequality we used the assumption $\eta<\sqrt{\delta}/(\sqrt{2d}a).$
		
		It remains to justify \eqref{lem:l101lt:eq:claim1}. Since
		\begin{equation}
			\label{lem:l101lt:eq:claim1pf}
			\begin{split}
				\E_x \left[ e^{2\eta a \abs{X_t}} \right] &= \frac{1}{\left( 2\pi t \right)^{d/2}} \int_{\R^d} e^{2\eta a \abs{z}} e^{-\frac{\abs{x-z}^2}{2t}} dz\leqslant \frac{1}{\left( 2\pi t \right)^{d/2}} \int_{\R^d} e^{2\eta a \sum_{i=1}^d|z_i|} e^{-\frac{\abs{x-z}^2}{2t}} dz\\
				&=\prod_{i=1}^d \frac{1}{\sqrt{2\pi t}} \int_{\R} e^{2\eta a |z_i|} e^{-\frac{\abs{x_i-z_i}^2}{2t}} dz_i
			\end{split}
		\end{equation}
		it suffices to focus on each of the factors in the above product separately. A simple computation shows that
		\begin{align*}
			&\frac{1}{\sqrt{2\pi t}}\int_{\R} e^{2\eta a |z_i|} e^{-\frac{\abs{x_i-z_i}^2}{2t}} dz_i\leqslant e^{2\eta a |x_i|}\frac{1}{\sqrt{2\pi t}}\int_{\R} e^{2\eta a |z_i-x_i|} e^{-\frac{\abs{x_i-z_i}^2}{2t}} dz_i\\
			&= e^{2\eta a |x_i|}\frac{1}{\sqrt{2\pi t}}\int_{\R} e^{2\eta a |y|} e^{-\frac{\abs{y}^2}{2t}} dy\leqslant 2 e^{2\eta a |x_i|}\frac{1}{\sqrt{2\pi t}}\int_{\R} e^{2\eta a y} e^{-\frac{\abs{y}^2}{2t}} dy\\
			&=2e^{2\eta a |x_i|}e^{(2\eta a)^2t/2}=2e^{2\eta a |x_i|}e^{2\eta^2a^2 t}.
		\end{align*}
		Hence, coming back to \eqref{lem:l101lt:eq:claim1pf} and using the inequality $\sum_{i=1}^d|x_i|\leqslant \sqrt{d}|x|$  we obtain
		\[
			\E_x \left[ e^{2\eta a \abs{X_t}} \right]\leqslant 2^{d}e^{2d\eta^2a^2t} \prod_{i=1}^de^{2\eta a |x_i|} \lesssim \exp(2d\eta^2 a^2 t+ 2\sqrt{d}\eta a |x|),
		\]
		thus proving the claim \eqref{lem:l101lt:eq:claim1}. 
		
		The proof of \thref{lem:l101lt} is thus completed.
		
	\end{proof}

By a comparison with the Hermite semigroup we can improve \thref{lem:l101lt} in the full range $a>0$ for potentials $V$ which grow at infinity faster than $|x|^2.$

\begin{pro}
\thlabel{lem28}
Let $c,b,N$ be positive constants. Assume that $V \in L_{\rm loc}^\infty$ is an a.e. non-negative potential that satisfies   $c|x|^2\leqslant V(x)$ for a.e. $|x|\geqslant N$ and $V(x) \lesssim e^{b|x|^2}$. Denote $\mu=\frac{d^{1/3}}{5 N^2}.$ Then, for each $0 < a \leqslant \frac{\mu \tanh \frac{\mu}{2}}{4b}$ we have
	\begin{equation} \label{eq28}
		\int_1^\infty e^{-tL}(V^{a})(x) \, t^{a-1} dt \lesssim 1,\qquad x\in\R^d.
	\end{equation}
\end{pro}

\begin{proof}
	Denote by $\omega$ a $C_c^{\infty}$ function which is equal to $c|x|^2$ for $|x|\leqslant N,$ is bounded by $c|x|^2,$ and vanishes for $|x|\geqslant 2N.$ Then, for all $k\in(0,1],$ we have
	\[
	V(x)+k\omega(x)\geqslant ck|x|^2,\qquad \text{for a.e. } x\in\R^d.
	\]
Hence, using \eqref{feynman-kac1} and Cauchy--Schwarz inequality we obtain
	\begin{equation}
		\label{lem:28:eq:1}
		\begin{split}
		e^{-tL}(V^{a})(x)&=	 \E_x \left[ e^{-\int_0^t V(X_s) ds} V^a(X_t) \right]=\E_x \left[ e^{-\int_0^t (V+k\omega)(X_s) ds} V^a(X_t) \cdot e^{k\int_0^t \omega(X_s) ds} \right]
		\\&\leqslant \left(\E_x \left[ e^{-2\int_0^t (V+k\omega)(X_s) ds} V^{2a}(X_t) \right]\right)^{1/2}\cdot \left(\E_x  e^{2k\int_0^t \omega(X_s) ds} \right)^{1/2}\\
		&\leqslant \left(\E_x \left[ e^{-2ck \int_0^t |X_s|^2 ds} V^{2a}(X_t) \right]\right)^{1/2}\cdot \left(\E_x  e^{2k\int_0^t \omega(X_s) ds} \right)^{1/2}\\
		&=\left(e^{-t(-\frac{\Delta}{2} + 2ck\abs{x}^2)}(V^{2a})(x)\right)^{1/2}\cdot \left(\E_x  e^{2k\int_0^t \omega(X_s) ds} \right)^{1/2}.
		\end{split}
	\end{equation}

In what follows we denote
	$$\gamma=\gamma(c,k)=2\sqrt{ck}.$$
	Throughout the proof the implicit constants in $\lesssim$ depend on $k\in(0,1],$ thus also on $\gamma.$ Appropriate $k$ and $\gamma$ will be fixed at a later stage. From \cite[4.1.2]{thangavelu} or \cite[1.4]{stempak} we deduce that
	\[
		e^{-t(-\frac{\Delta}{2} + 2ck\abs{x}^2)}f(x)=e^{-t(-\frac{\Delta}{2} + \frac{\gamma^2}{2}\abs{x}^2)}f(x) = \left(\frac{\gamma}{2\pi} \right)^{d/2} \int_{\R^d} K_t^\gamma(x,y)f(y) \, dy,
	\]
	with
	\begin{align*}
		K_t^\gamma(x,y) &= \frac{1}{(\sinh \gamma t)^{d/2}} \exp \left( -\frac{\gamma}{2}\left( \abs{x}^2+\abs{y}^2 \right)\coth \gamma t + \frac{\gamma \innprod{x}{y}}{\sinh \gamma t} \right) \\
		&= \frac{1}{(\sinh \gamma t)^{d/2}} \exp \left( -\frac{\gamma\abs{x-y}^2}{4\tanh \frac{\gamma t}{2}} - \frac{\gamma\tanh \frac{\gamma t}{2}}{4} \abs{x+y}^2 \right).
	\end{align*}

	Using the upper bound on $V$ we estimate $e^{-t(-\frac{\Delta}{2} + \frac{\gamma^2}{2}\abs{x}^2)}(V^{2a})$ as follows
	\begin{align} \label{lem28:eq3}
		e^{-t(-\frac{\Delta}{2} + \frac{\gamma^2}{2}\abs{x}^2)}&(V^{2a})(x) \nonumber \\
		&\lesssim \frac{1}{(\sinh \gamma t)^{d/2}} \int_{\R^d} V(y)^{2a} \exp \left( -\frac{\gamma\abs{x-y}^2}{4\tanh \frac{\gamma t}{2}} - \frac{\gamma \tanh \frac{\gamma t}{2}}{4} \abs{x+y}^2 \right) dy \nonumber \\
		&\lesssim e^{-\frac{d\gamma t}{2}} \int_{\R^d} \exp \left(2ab\abs{y}^2 -\frac{\gamma\abs{x-y}^2}{4\tanh \frac{\gamma t}{2}} - \frac{\gamma\tanh \frac{\gamma t}{2}}{4} \abs{x+y}^2 \right) dy
	\end{align}
Rewriting the exponents we obtain
	\begin{align*}
		&2ab\abs{y}^2 -\frac{\gamma \abs{x-y}^2}{4\tanh \frac{\gamma t}{2}} - \frac{\gamma\tanh \frac{\gamma t}{2}}{4} \abs{x+y}^2 \\
		&= \left( 2ab - \frac{\gamma \coth \gamma t}{2} \right) \abs{ y + \frac{\gamma \csch \gamma t}{4ab - \gamma \coth \gamma t} x }^2 - \left( \frac{\gamma \coth \gamma t}{2} + \frac{\left( \gamma \csch \gamma t \right)^2}{8ab - 2\gamma \coth \gamma t} \right)\abs{x}^2.
	\end{align*}
	We see that for the integral in \eqref{lem28:eq3} to be finite the quantity $\varphi(t) \coloneqq 2ab - \frac{\gamma \coth \gamma t}{2}$ has to be negative for all $t \geqslant 1$, which is satisfied for $a \leqslant \frac{\gamma \tanh \frac{\gamma}{2}}{4b}$ since $\frac{\gamma \tanh \frac{\gamma}{2}}{4b}< \frac{\gamma \coth \gamma t}{4b}.$ For such $a$ we have $\varphi(t) \leqslant \frac{\gamma}{2}(\tanh \frac{\gamma}{2}-\coth \gamma t)$ and
	\begin{align*}
		&\int_{\R^d} \exp \left(2ab\abs{y}^2 -\frac{\gamma\abs{x-y}^2}{4\tanh \frac{\gamma t}{2}} - \frac{\gamma\tanh \frac{\gamma t}{2}}{4} \abs{x+y}^2 \right) dy \\
		&= \exp \left( - \left( \frac{\gamma \coth \gamma t}{2} + \frac{\left( \gamma \csch \gamma t \right)^2}{4\varphi(t)} \right)\abs{x}^2 \right) \int_{\R^d} e^{\varphi(t) \abs{y}^2} dy \\
		&\leqslant \exp \left( - \frac{\gamma}{2}\left( \coth \gamma t + \frac{\csch^2 \gamma t}{\tanh \frac{\gamma}{2} - \coth \gamma t} \right)\abs{x}^2 \right) \left( -\frac{\pi}{\varphi(t)} \right)^{d/2}.
	\end{align*}
	Denoting $\psi(t) := \coth \gamma t + \frac{\csch^2 \gamma t}{\tanh \frac{\gamma}{2} - \coth \gamma t}$ a calculation gives $$\psi'(t) = -\frac{\gamma \csch^2 \gamma t \ \cdot \ \left( -1 + \tanh^2 \frac{\gamma}{2} \right)}{\left( \tanh \frac{\gamma}{2} - \coth \gamma t \right)^2}.$$
Since $\psi'$ is positive the function $\psi$ is strictly increasing. Moreover it  has a zero at $t=\frac{1}{2}$ so that for $t \geqslant 1$ we have $\psi(t) \geqslant \psi(1) =\delta > 0$ and thus we can continue the previous calculation as follows
	\begin{align*}
	&\exp \left( - \frac{\gamma}{2}\left( \coth \gamma t + \frac{\csch^2 \gamma t}{\tanh \frac{\gamma}{2} - \coth \gamma t} \right)\abs{x}^2 \right) \left( -\frac{\pi}{\varphi(t)} \right)^{d/2} \\
	&\lesssim e^{-\frac{\gamma\delta\abs{x}^2}{2}}(-\varphi(t))^{-d/2}
	\end{align*}
Next we need to handle the term $(-\varphi(t))^{-d/2}$. Since 
	$a \leqslant \frac{\gamma \tanh \frac{\gamma}{2}}{4b} $ we see that $$(-\varphi(t))^{-d/2} \lesssim \left(\gamma \left( \coth \gamma t - \tanh \frac{\gamma}{2} \right)\right)^{-d/2}\lesssim 1,\qquad t\geqslant 1.$$ 
	Finally plugging the above estimates in \eqref{lem28:eq3} we get
	\begin{equation} \label{lem:28:eq:2}
		e^{-t(-\frac{\Delta}{2} + \frac{\gamma^2}{2}\abs{x}^2)}(V^{2a})(x) \lesssim e^{-\frac{d\gamma t}{2}} e^{-\frac{\gamma\delta\abs{x}^2}{2}},
	\end{equation}
uniformly in $x\in \R^d$ and $t\geqslant 1.$
	
	Next we estimate $\left(\E_x  e^{2k\int_0^t \omega(X_s) ds} \right)^{1/2}.$ Since $\omega \leqslant 4cN^2 \ind{P}$ for $P = [-2N, 2N] \times \R^{d-1}$, we can apply \thref{lem22} with $k' = 4ckN^2$, which gives
	\begin{equation} \label{lem:28:eq:3}
		\E_x  e^{2k\int_0^t \omega(X_s) ds} \lesssim e^{512 c^2 k^2 N^6 t} = e^{32 \gamma^4 N^6 t}
	\end{equation}
	 
	 Combining \eqref{lem:28:eq:2} and \eqref{lem:28:eq:3} and coming back to \eqref{lem:28:eq:1} we reach

	\begin{align*}
		\int_1^\infty e^{-tL}(V^{a})(x) \, t^{a-1} dt &\lesssim e^{-\frac{\gamma\delta\abs{x}^2}{4}} \int_1^\infty e^{-\frac{d\gamma t}{4}} e^{16 \gamma^4 N^6 t} t^{a-1} dt \lesssim 1,\qquad x\in\R^d,
	\end{align*}
provided that $\gamma< \frac{d^{1/3}}{4N^2}.$ This can be achieved by taking $k=\min(1,\mu^2/(4c)),$ since for such  $k$  we have
\[
	\gamma = 2\sqrt{ck} \leqslant \mu < \frac{d^{1/3}}{4N^2}.
\]

The proof of \thref{lem28} is thus completed.

\end{proof}

	We shall now derive $L^1$ boundedness of $R_V^a$ using  \thref{lem:l101} together with one of the \thref{lem:l101lt,lem:l11inf,lem28}.
	
	Combining  \thref{lem:l101} and \thref{lem:l101lt} we get a theorem on the $L^1$ boundedness of $R^a_V.$ Note that this theorem inherits the stronger assumptions on $V$ from \thref{lem:l101lt}. Its advantage is the allowance of large $a$ when $V(x)\lesssim e^{\eta|x|}$ with small $\eta.$ This is useful for instance when $V(x)\ag |x|^{\alpha}.$
	
	\begin{theorem}
		\thlabel{th:l1}
		Let $V$ be an a.e. non-negative potential having an exponential growth \eqref{eq:expgro} for some $\eta>0$ and such that $e^{-tL}$ has an exponential decay \eqref{eq:ed} of an order $\delta > 0.$ Let $0<a<\delta^{1/2}(2d)^{-1/2} \eta^{-1},$ take $b>a$ and let $c$ be the constant defined in \eqref{eq:cdef}. If
		\[
			K_c^a(V)(x) \lesssim_g 1 \qquad \text{and} \qquad I^b(V)(x) \lesssim_g 1,
		\]
		then $R_V^a$ is bounded on $L^1.$
	\end{theorem}
	\begin{proof}
		By duality it suffices to estimate the $L^{\infty}$ norm of 
		\begin{equation}
			\label{eq:LGsplit}
			\begin{split}
		\frac{1}{\Gamma(a)}\int_0^{\infty} e^{-tL}(V^a)t^{a-1}\,dt&=\frac{1}{\Gamma(a)}\int_0^1 e^{-tL}(V^a)t^{a-1}\,dt+\frac{1}{\Gamma(a)}\int_1^{\infty} e^{-tL}(V^a)t^{a-1}\,dt\\
		&=:L+G.
		\end{split}
	\end{equation}
		Using the bound $e^{\eta|x|}\lesssim e^{|x|^2/(4a)}$ and \eqref{eq:l101'} from \thref{lem:l101} we see that
		\[
		L(x)\lesssim C(N), 
		\]
		whenever $|x|\leqslant N.$ Then \eqref{eq:comKJ} together with \eqref{eq:l101} from \thref{lem:l101}  gives
		\[
		\|L\|_{\infty}\lesssim 1.
		\]
		The estimate
		\[
		\|G\|_{\infty}\lesssim 1
		\]
		is a straightforward consequence of our assumptions and \thref{lem:l101lt}.
		
	\end{proof}

\thref{lem:l101} and \thref{lem28} allow us to improve \thref{th:l1} for potentials that grow at least as a constant times $|x|^2$. The improvement comes from the replacement of the condition $K_c^a(V)(x)\lesssim_g 1$ by $J^a(V)(x)\lesssim 1.$ This is useful e.g.\ for potentials $V(x)=\beta^{|x|},$ $\beta>1,$ for which  $K_c^a(V)$ may be unbounded.

\begin{theorem}
	\thlabel{th:l1ho}
	Let $0<a<\infty$ and let $V$ be an a.e. non-negative potential which satisfies, for some $c>0$ the estimate $c|x|^2 \lesg V(x).$ Assume that for all $\varepsilon >0$ we have $V(x) \lesssim_{\varepsilon} e^{\varepsilon |x|^2}.$ If
	\[
		J^a(V)(x) \lesssim_g \qquad \text{and} \qquad I^a(V)(x) \lesssim_g 1,
	\]
	then $R_V^a$ is bounded on $L^1.$
\end{theorem}
\begin{proof}
We use the splitting \eqref{eq:LGsplit} again. The estimate $\norm{G}_\infty \lesssim 1$ is a consequence of \thref{lem28}. Indeed, the assumption $V(x)\lesssim e^{\varepsilon |x|^2}$ with arbitrarily small $\varepsilon>0$ implies that we can apply \thref{lem28} with arbitrarily large $a>0.$ The bound $\norm{L}_\infty \lesssim 1$ follows from the assumptions and \thref{lem:l101} as in the proof of \thref{th:l1}.
\end{proof}

	As a corollary of \thref{th:l1,th:l1ho} we obtain the $L^1(\R^d)$ boundedness of $R_V^a$ for various classes of potentials. The corollary below is a restatement of \thref{thm:exL1int} from the introduction.
	\begin{cor}
		\thlabel{cor:l1}
		Let  $V\colon \R^d\to [0,\infty)$ be a function in $L_{\rm loc}^\infty$. Then in all the three cases
		\begin{enumerate}
			\item $V(x)\approx 1$ globally
			\item For some $\alpha>0$ we have $V(x)\approx |x|^{\alpha}$ globally
			\item For some $\beta>1$ we have $V(x)\approx \beta^{|x|}$ globally
		\end{enumerate}
		each of the Riesz transforms $R_V^a,$ $a>0$, is bounded on $L^{1}(\R^d).$ 
	\end{cor}

	\begin{remark}
		Similarly to \thref{cor:linf} the Euclidean norm $\abs{\cdot}$ in (2) and (3) can be replaced by an arbitrary norm on $\R^d$.
	\end{remark}

	\begin{proof}
				In the proof implicit constants in $\lesssim,$ $\gtrsim,$ and $\approx$ do not depend on $x\in \R^d$ but may depend on $a>0,$ $\alpha>0$ or $\beta>1.$
		
		Note that in all three cases the assumptions of 	\thref{lem21} are satisfied so that the semigroup $e^{-tL}$ satisfies \eqref{eq:ed}.
		
		In case 1) we merely use \eqref{eq:ed} and obtain 
		\[
		\frac{1}{\Gamma(a)}\int_0^{\infty} e^{-tL}(V^a)(x)t^{a-1}\,dt\lesssim \frac{1}{\Gamma(a)}\int_0^{\infty} \|e^{-tL}(\ind{})\|_{\infty}\,t^{a-1}\,dt\lesssim 1,
		\]
		uniformly in $x\in \R^d.$

	In the treatment of the remaining cases we will apply \thref{th:l1} in case 2) and \thref{th:l1ho} in case 3).
		
		We start with case 2); the task is to check that the assumptions of \thref{th:l1} hold. Clearly \eqref{eq:expgro} is true for any $\eta > 0$. In the proof of \thref{cor:linf} we justified in \eqref{eq:xaIbound} that $I^b(V)(x) \lesg 1$ for any $b>0.$ Finally we need to control $K_c^a(V)(x).$ To this end we shall estimate $\sigma_x(s)$ from below. Let $C$, $N$, $m$ and $M$ be non-negative constants such that
			\[
				m\abs{x}^\alpha < V(x) < M\abs{x}^\alpha \quad \text{for a.e.} \quad \abs{x} > N
			\]
			and
			\[
				V(x) \leqslant C \quad\text{for a.e.} \quad \abs{x} \leqslant N.
			\]
			Take $|x|\geqslant N$ and assume that $|x-y|<\varepsilon \abs{x}  s^{1/\alpha},$ where $\varepsilon>0$ is a constant to be determined in a moment. Then  
			\[
				\abs{y}\leqslant \abs{x} + \abs{x-y} \leqslant \abs{x} (1+\varepsilon s^{1/\alpha})
			\]
			so that for $\abs{y} > N$ we have
			\[
				V(y) \leqslant M|y|^{\alpha} \leqslant M \abs{x}^\alpha \left( 1+\varepsilon s^{1/\alpha} \right)^\alpha \leqslant M A \abs{x}^\alpha \left( 1 + \varepsilon^\alpha s \right)
			\]
			for some constant $A \ge 1$ depending only on $\alpha.$ 
			On the other hand 
			\[
				V(x) \geqslant m\abs{x}^\alpha 
			\]
			so taking $\varepsilon$ such that $MA\varepsilon^{\alpha}=m/2$ we see that the inequality  $|x-y|<\varepsilon \abs{x}  s^{1/\alpha}$ implies 
			$$V(y)\leqslant M A \abs{x}^\alpha \left( 1 + \varepsilon^\alpha s \right)\leqslant MA|x|^{\alpha} + sV(x)/2 \leqslant \left(\frac{MA}{m} + \frac{s}{2}\right) V(x) \leqslant  sV(x),$$
			whenever $s$ is large enough (independently of $x$). Thus we proved that $\sigma_x(s)\geqslant \varepsilon \abs{x}  s^{1/\alpha}$ for such $s$ and a.e. $\abs{x} \geqslant N$.
			Consequently, 
			\[
				K_c^a(V)(x) \lesssim_g 1 +  \int_1^{\infty} e^{-c\varepsilon \abs{x}  s^{1/\alpha}}s^{a-1}\,ds \lesg 1
			\]
		for any $a,c > 0$ and an application of \thref{th:l1} completes the proof in case 2).
		
		Finally we justify case 3). It is clear that $c\abs{x}^2 \lesssim_g V(x) \lesssim e^{\varepsilon \abs{x}^2}$ for some $c > 0$ and all $\varepsilon > 0$. Moreover, in the proof of \thref{cor:linf} in \eqref{eq:bxIbound} we justified that $I^{a}(V)(x) \lesg 1$. Thus, in order to use \thref{th:l1ho} it remains to estimate $J^a(V)(x)$. Similarly, to case 2) we shall estimate $\sigma_x(s)$ from below. Let $M>0$ be a constant such that $V(y)\leqslant M\beta^{|y|},$ for a.e. $y\in \R^d$ and let  $N,$ $m$ be non-negative constants such that $m\beta^{|x|} < V(x)$ for a.e. $\abs{x} \geqslant N.$ Take $|x|\geqslant N,$ $s\geqslant 1$ and assume that $|x-y|<\frac12 \log_{\beta} s.$ Then we have 
		$
			|y|\leqslant \abs{x} +\frac12 \log_{\beta} s,
		$
		so that
		\[
			V(y)\leqslant  Ms^{1/2}\beta^{|x|}\leqslant \frac{M}{m}s^{1/2} V(x)\leqslant sV(x),
		\]
		for $s$ large enough (independently of $y$ and $x$). In other words we proved that $\sigma_x(s)\geqslant \frac12 \log_{\beta} s$ whenever $|x|\geqslant N$ and $s$ is uniformly large enough. Consequently,
		\[
			J^a(V)(x) \lesssim_g 1+\int_1^{\infty} e^{-(\log_{\beta} s)^2/32}s^{a-1}\,ds \lesg 1
		\]
		for any $a>0$ and an application of \thref{th:l1ho} completes the proof in case 3).
		
	\end{proof}
		
We finish this section with improved results for Riesz transforms $R_V^a$ in the range $0<a<1.$ These results are not needed in the proof of \thref{cor:l1}, however they might by useful in other cases.

Using the $L^1$ boundedness of $R_V^1$ one may improve \thref{lem:l101lt} in the range $0\leqslant a\leqslant 1$.

\begin{pro}
	\thlabel{lem:l11inf}
	Let $a \leqslant 1$ and assume that $e^{-tL}$ satisfies \eqref{eq:ed} with some $\delta>0$. Then the estimate
	\begin{equation} \label{lem:l11inf:eq1}
		\int_1^\infty e^{-tL}(V^{a})(x) \, t^{a-1} dt \lesssim 1
	\end{equation}
	holds uniformly in $x\in \R^d.$
\end{pro}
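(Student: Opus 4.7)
The plan is to exploit the already-known $L^1$ boundedness of $R_V^1$ (Lemma 3.10 of \cite{DzZi0}), which by duality tells us that $L^{-1}(V)$ is uniformly bounded, and combine it with a trivial elementary domination $V^a \leqslant 1+V$ that is valid precisely in the range $a\in(0,1]$.

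\textbf{Step 1 (elementary inequality).} For $a\in(0,1]$ and $V\geqslant 0$ one has $V^a\leqslant \max(1,V)\leqslant 1+V$. Since the semigroup $e^{-tL}$ is positivity preserving, this yields the pointwise bound
\[
e^{-tL}(V^a)(x)\leqslant e^{-tL}(\ind{})(x)+e^{-tL}(V)(x),\qquad t>0,\ x\in\R^d.
\]

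\textbf{Step 2 (first term).} Using the hypothesis \eqref{eq:ed} we have $e^{-tL}(\ind{})(x)\lesssim e^{-\delta t}$, hence
\[
\int_1^\infty e^{-tL}(\ind{})(x)\,t^{a-1}\,dt\lesssim \int_1^\infty e^{-\delta t}\,t^{a-1}\,dt<\infty
\]
uniformly in $x$.

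\textbf{Step 3 (second term, the main point).} For $t\geqslant 1$ and $a\leqslant 1$ one has $t^{a-1}\leqslant 1$, so
\[
\int_1^\infty e^{-tL}(V)(x)\,t^{a-1}\,dt\leqslant \int_0^\infty e^{-tL}(V)(x)\,dt=L^{-1}(V)(x).
\]
The $L^1$ boundedness of $R_V^1=V\cdot L^{-1}$, recorded in \cite[Lemma 3.10]{DzZi0}, is equivalent (by a standard duality argument, as sketched in the introduction to Section \ref{sec:l1}) to $L^{-1}(V)\in L^\infty$ with $\|L^{-1}(V)\|_\infty\lesssim 1$. Summing the two estimates completes the proof.

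The only point requiring any care is to justify that the pointwise manipulations are legitimate when $V$ is unbounded; this is handled exactly as in the introduction to Section \ref{sec:l1} by first truncating $V$ to $V\ind{V<N}$ inside the integral \eqref{eq:adRavfor} and applying monotone convergence at the end. No new analytic input beyond the cited $L^1$ bound on $R_V^1$ is needed, and in particular one sees that the range $a\leqslant 1$ is essential: it is exactly what makes the elementary domination $V^a\lesssim 1+V$ available and what makes $t^{a-1}\leqslant 1$ on $[1,\infty)$.
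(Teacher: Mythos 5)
Your proposal is correct and matches the paper's proof essentially verbatim: both use the elementary domination $V^a \leqslant 1+V$ (equivalently $e^{-tL}(V^a)\leqslant e^{-tL}(V)+e^{-tL}(\ind{})$), handle the $\ind{}$ term by \eqref{eq:ed}, and handle the $V$ term via the $L^1$ boundedness of $R_V^1$ from \cite[Lemma 3.10]{DzZi0} by duality. Your Step 3, in which you observe $t^{a-1}\leqslant 1$ on $[1,\infty)$ so that $\int_1^\infty e^{-tL}(V)\,t^{a-1}\,dt\leqslant L^{-1}(V)(x)$, makes explicit a small point the paper leaves implicit, but the argument is the same.
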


\begin{proof}
	Observe that for $a \leqslant 1$ we have
	\[
	e^{-tL}(V^a)(x) \leqslant e^{-tL}(V)(x) + e^{-tL}(\ind{})(x),
	\]
	so that
	\begin{equation} \label{pro:43:eq1}
		\int_1^\infty e^{-tL}(V^{a})(x) \, t^{a-1} dt \leqslant \int_1^\infty e^{-tL}(V)(x) \, t^{a-1} dt + \int_1^\infty e^{-tL}(\ind{})(x) \, t^{a-1} dt.
	\end{equation}
	From e.g.\ \cite[Theorem 4.3]{AuBA} we see that the operator $R_V^1$ is bounded on $L^1$ which, by duality, means that the first integral in \eqref{pro:43:eq1} is bounded independently of $x$. Boundedness of the second integral follows from \eqref{eq:ed}.
\end{proof}	  

Finally, combining \thref{lem:l11inf} and \thref{lem:l101} we obtain an improved version of \thref{th:l1} in the range $0<a\leqslant 1.$
	
	\begin{theorem}
		\thlabel{th:l1a<1}
		Let $0<a\leqslant 1$ and let $V$ be an a.e. non-negative potential which satisfies the growth estimate $V(x)\lesssim \exp(|x|^2/(4a))$ and such that $e^{-tL}$ has an exponential decay \eqref{eq:ed} for some $\delta>0.$  If
		\[
		J^a(V)(x) \lesssim_g 1 \qquad \text{and} \qquad I^a(V)(x) \lesssim_g 1,
		\]
		then $R_V^a$ is bounded on $L^1.$
	\end{theorem}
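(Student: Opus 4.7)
The plan is to mirror the proof of \thref{th:l1}, substituting \thref{lem:l11inf} for \thref{lem:l101lt} in the tail estimate. Doing so removes the restriction on $a$ imposed by the exponential growth in \thref{th:l1} and allows us to weaken the hypothesis from $\limsup K^a_c(V)<\infty$ to $\limsup J^a(V)<\infty$ (the two are compatible because $J^a\lesssim K^a_c$ by \eqref{eq:comKJ}).

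By the duality discussion at the beginning of Section \ref{sec:l1}, it is enough to show that $L^{-a}(V^a)$, defined via \eqref{eq:adRavfor}, lies in $L^\infty(\R^d)$. Following the proof of \thref{th:l1}, I would split this integral at $t=1$:
\begin{equation*}
\Gamma(a)L^{-a}(V^a)(x) = \int_0^1 e^{-tL}(V^a)(x)\,t^{a-1}\,dt + \int_1^\infty e^{-tL}(V^a)(x)\,t^{a-1}\,dt =: \Gamma(a)\bigl(L(x)+G(x)\bigr).
\end{equation*}

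For the tail piece $G$, since $a\leqslant 1$ and the semigroup satisfies \eqref{eq:ed}, \thref{lem:l11inf} applies directly and gives $\|G\|_\infty\lesssim 1$ with no further hypothesis on $V$. For the local piece $L$, I would use \thref{lem:l101}. Choose $N$ so large that both $I^a(V)$ and $J^a(V)$ are uniformly bounded on $\{|x|>N\}$. On the compact region $\{|x|\leqslant N\}$ the growth hypothesis $V\lesssim\exp(|x|^2/(4a))$ together with \eqref{eq:l101'} yields $L(x)\lesssim \exp(|x|^2)\leqslant e^{N^2}$; on $\{|x|>N\}$ with $V(x)\neq 0$, inequality \eqref{eq:l101} combined with the two $\limsup$ assumptions yields $L(x)\lesssim (J^a(V)(x)+1)(I^a(V)(x)+1)\lesssim 1$. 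Combining gives $\|L\|_\infty\lesssim 1$, which completes the proof.

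There is no real obstacle here, as the lemmas of Section \ref{sec:l1} have been set up precisely for this combination. The only delicate point is the interface between the two estimates of \thref{lem:l101}: the useful bound \eqref{eq:l101} requires $V(x)\neq 0$, while \eqref{eq:l101'} is valid for all $x$ but only gives control of the type $\exp(|x|^2)$, which is uniform only on compact sets. As in the proof of \thref{th:l1}, these two estimates are glued together through the bounded/unbounded decomposition of $\R^d$, and the choice of radius $N$ is the same as there.
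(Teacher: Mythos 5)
Your proof is correct and takes essentially the same approach as the paper: split the integral at $t=1$, bound the tail via \thref{lem:l11inf} and the local part via \thref{lem:l101}, gluing the two estimates \eqref{eq:l101} and \eqref{eq:l101'} across a compact set and its complement exactly as in the proof of \thref{th:l1}. The paper's proof of \thref{th:l1a<1} is essentially a one-line reference to this same argument.
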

	\begin{proof}
		We use the splitting \eqref{eq:LGsplit}. The estimate $\norm{G}_\infty \lesssim 1$ is an immediate consequence of \thref{lem:l11inf}. The bound $\norm{L}_\infty \lesssim 1$ follows from the assumptions and \thref{lem:l101} as in the proof of \thref{th:l1}.
\end{proof}
			
\bibliographystyle{plain}
\bibliography{bib}
	
\end{document}